\documentclass[12pt]{amsart}

\usepackage[colorlinks,
linkcolor=blue,
anchorcolor=blue,
citecolor=blue]{hyperref}
\usepackage{epsfig}
\usepackage{graphicx}
\usepackage{amssymb, amstext, amscd, amsmath}
\usepackage{amsthm, mathrsfs, amsfonts,dsfont}
\usepackage{fullpage}
\usepackage{txfonts}
\usepackage{fancybox}
\usepackage{color}
\usepackage{cite}
\usepackage{comment}
\usepackage{tikz-cd} 

\allowdisplaybreaks

\newtheorem{theorem}{Theorem}[section]
\newtheorem{lemma}{Lemma}[section]
\newtheorem{proposition}{Proposition}[section]
\newtheorem{corollary}{Corollary}[section]

\theoremstyle{definition}

\newtheorem{remark}{Remark}[section]

\numberwithin{equation}{section}

\begin{document}
\title{\fontsize{14}{17}\selectfont
Asymptotic Rigidity and Boundary Structure of Yang's Numerical
Invariants for the Bidisk Submodules $[z^k-w^\ell]$}
\author{Yin Liu}
\address{School of Mathematics and Statistics, Nanyang Normal University,
Nanyang, Henan, 473061, P. R. China} 
\email{lylight@mail.bnu.edu.cn}

\author{Yufeng Lu}
\address{School of Mathematical Sciences, Dalian University of Technology,
Dalian, Liaoning, 116024, P. R. China}
\email{lyfdlut@dlut.edu.cn}

\author{Yixin Yang*}
\address{School of Mathematical Sciences, Dalian University of Technology,
Dalian, Liaoning, 116024, P. R. China}
\email{yangyixin@dlut.edu.cn}

\subjclass[2020]{Primary 47A13; Secondary 47B32, 46E22}
\thanks{*Corresponding author.}
\keywords{Hardy module over the bidisk; Yang's numerical invariants;
quasi-homogeneous submodule; asymptotic rigidity; generating function.}

\begin{abstract}
Let
\[
M_{k,\ell}=[z^k-w^\ell]\subset H^2(\mathbb D^2),
\qquad k,\ell\in\mathbb N,\quad k\ne\ell .
\]
We study the large-index asymptotics of Yang's numerical invariants
and the boundary structure of their generating function. Starting from the exact staircase formula
obtained in our preceding work, we prove that
\[
\Sigma_j(M_{k,\ell})
=
\frac{C_{k,\ell}}{j}
+
O(j^{-2}),
\]
where
\[
C_{k,\ell}
=
\int_0^\infty
\frac{x^2}
{(x+1/k)^2(x+1/\ell)^2}\,dx .
\]
The first strict descent together with $C_{k,\ell}$ recovers the
unordered pair $\{k,\ell\}$, yielding an asymptotic rigidity principle. At the next order we obtain a periodic correction
\[
\Sigma_j(M_{k,\ell})
=
\frac{C_{k,\ell}}{j}
+
\frac{\Psi_{k,\ell}(j)}{j^2}
+
O(j^{-3}),
\]
whose least period is \(\operatorname{lcm}(k,\ell)\), and we determine
the leading amplitudes of the strict drops.

For Yang's generating function
\[
\mathcal P_{k,\ell}(t)
=
\sum_{j=0}^{\infty}\Sigma_j(M_{k,\ell})t^j,
\]
we prove that it has radius of convergence one and a
logarithmic singularity at $t=1$; hence it is never a polynomial
for $k\ne\ell$, giving a negative answer to Yang's polynomiality
question within this quasi-homogeneous family. The periodic
higher-order corrections generate root-of-unity polylogarithmic
boundary modes. The leading logarithmic coefficient together with
the second-order boundary support determines the unordered pair
$\{k,\ell\}$, while successive renormalized boundary limits recover
the Fourier coefficients of every finite-order periodic asymptotic
term.
\end{abstract}

\maketitle

\setcounter{tocdepth}{2}
\tableofcontents

\section{Introduction}

The Hardy space $H^2(\mathbb D^2)$ over the bidisk is naturally a
Hilbert module over the polynomial ring $\mathbb C[z,w]$, with the
module action given by multiplication by the coordinate functions.
The structure of its submodules is substantially more complicated
than in the one-variable Hardy space, and operator-theoretic
invariants associated with the restricted coordinate shifts provide
a useful means of distinguishing and organizing such submodules;
see, for example, \cite{DouglasPaulsen1989,ChenGuo2003,GuoYang2004,
Yang2001,Yang2005Core,Yang2005HS}.

A sequence of numerical invariants for submodules of
$H^2(\mathbb D^2)$ was introduced by Yang in \cite{Yang2001}.
If $M\subset H^2(\mathbb D^2)$ is a submodule and
$\{\phi_n\}\subset M\ominus zM$ and
$\{\psi_m\}\subset M\ominus wM$ are orthonormal bases of the two
defect spaces, then the higher numerical invariants may be written as
\[
\Sigma_j(M)
=
\sum_{n=0}^{\infty}\sum_{m=0}^{\infty}
\left|
\left\langle
w^j\phi_n,z^j\psi_m
\right\rangle
\right|^2,
\qquad j\geq0.
\]

These quantities are intrinsic to the module and are closely related
to the core operator, cross-commutators, and Hilbert--Schmidt
properties. In the same paper, Yang introduced the formal power series
\[
p_M(t)=\sum_{j=0}^{\infty}\Sigma_j(M)t^j
\]
as a unified invariant. He also proved that $p_M(t)$ is constant
precisely when $M$ is unitarily equivalent to the full Hardy module
\cite[Sec.~3, Theorem~3.5, p.~537]{Yang2001}.

Explicit determination of the higher invariants is difficult even for
polynomially generated submodules.  For homogeneous principal
submodules, Azari Key, Lu and Yang developed a Toeplitz-determinant
method which reduces the calculation to the Gram matrices of the
homogeneous wandering spaces \cite{AzariLuYang2017}.  More recently,
the quadratic model $[(z-w)^2]$ was computed explicitly
\cite{LiuLuZu2026Quadratic}, the family $[z^k-w^k]$ was shown to
exhibit a block-repetition phenomenon
\cite{LiuLuZu2026Block}, and strict monotonicity was established for
the repeated-factor family $[(z-w)^k]$
\cite{LiuLuZu2026Strict}.  A weighted OPUC--CMV framework for general homogeneous principal
submodules was subsequently developed in \cite{LuZu2026CMV}.
Within this broader framework, examples were obtained showing that
Yang's numerical invariant sequence need not be monotone for a
general homogeneous generator. This indicates that the behavior of
the complete invariant sequence is sensitive to the algebraic
structure of the generating polynomial.

Quasi-homogeneous quotient modules over the bidisk had previously been
studied from the viewpoints of essential normality, \(K\)-homology, and
trace formulas; see, for example, \cite{GuoWang2012,GuoWangZhang2012}.
In particular, the weighted circle action associated with
\(z^k-w^\ell\) already appears in \cite[Example~5.13]{GuoWangZhang2012}.

Our preceding work \cite{LiuLuZu2026Quasi} determined the complete numerical invariant
sequence for
\[
M_{k,\ell}=[z^k-w^\ell]\subset H^2(\mathbb D^2),
\qquad k,\ell\in\mathbb N,\quad k\ne\ell .
\]
In particular,
\[
\Sigma_j(M_{k,\ell})
=
F\!\left(
\left\lceil\frac{j}{\ell}\right\rceil,
\left\lceil\frac{j}{k}\right\rceil
\right),
\qquad j\ge1,
\]
where $F$ is recalled in Section~\ref{sec2}, and the strict descent set is $\mathcal D_{k,\ell}=k\mathbb N\cup\ell\mathbb N$.
Thus the sequence has a two-parameter arithmetic staircase structure,
and the complete sequence determines the unordered pair $\{k,\ell\}$.

The first motivation for the present paper comes directly from this
exact description. Although the complete invariant sequence and its arithmetic staircase
structure are known, it is natural to ask how much of the parameter
information is already visible in the large-index asymptotic tail.

The second motivation comes from Yang's generating function, which
encodes the numerical invariants $\Sigma_j(M)$ as its coefficients.
This naturally raises a complementary question: how is the large-index
asymptotic structure of the invariant sequence reflected in the
boundary behavior of the generating function? This viewpoint is further
motivated by a question posed by Yang in his original work. For Yang's generating function
\[
p_M(t)=\sum_{j=0}^{\infty}\Sigma_j(M)t^j,
\]
he asked \cite[Sec.~3, p.~537]{Yang2001}:

\medskip
\noindent\textbf{Question (Yang, 2001).}
\emph{When is \(p_M(t)\) a polynomial?}
\medskip

For the quasi-homogeneous family considered here, the corresponding
generating function $\mathcal P_{k,\ell}:=p_{M_{k,\ell}}$ has a
logarithmic singularity at $t=1$ and hence is never a polynomial. More importantly for the present work, its boundary singularities
encode the periodic higher-order asymptotic structure of the
coefficient sequence.

These two related motivations lead to the following three questions,
which also indicate the order of the analysis developed below:
\begin{enumerate}
\item[(i)]
What is the precise decay rate of
$\Sigma_j(M_{k,\ell})$ as $j\to\infty$?

\item[(ii)]
How much of the pair $\{k,\ell\}$ can be recovered from coarse
asymptotic data rather than from the entire invariant sequence?

\item[(iii)]
How is the asymptotic structure of the invariant sequence
reflected in the boundary behavior of Yang's generating function?
\end{enumerate}

Our first main theorem establishes the leading decay law and shows
that the leading tail coefficient, together with the first strict
descent, already determines the two quasi-homogeneous
parameters \(k\) and \(\ell\).

\begin{theorem}[Leading asymptotics and rigidity]
\label{thm:intro-asymptotic-rigidity}
Let
\[
M_{k,\ell}=[z^k-w^\ell],\qquad k,\ell\in\mathbb N,\ k\neq\ell .
\]
Then
\[
\Sigma_j(M_{k,\ell})
=
\frac{C_{k,\ell}}{j}
+O(j^{-2}),
\qquad j\to\infty,
\]
where
\[
C_{k,\ell}
=
\int_0^\infty
\frac{x^2}
{(x+\frac1k)^2(x+\frac1\ell)^2}\,dx .
\]
Moreover, if $a=\min\{k,\ell\},\qquad b=\max\{k,\ell\}$,
then the first strict descent determines \(a\), and for fixed \(a\),
the coefficient \(C_{a,b}\) determines \(b\).
Hence the asymptotic data
\[
\left(\min\mathcal D_{k,\ell},
\lim_{j\to\infty}j\Sigma_j(M_{k,\ell})\right)
\]
determine the unordered pair \(\{k,\ell\}\).
\end{theorem}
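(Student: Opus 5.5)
From the staircase formula of \cite{LiuLuZu2026Quasi}, $\Sigma_j(M_{k,\ell})=F(\lceil j/\ell\rceil,\lceil j/k\rceil)$ for $j\ge1$, with $F$ as in Section~\ref{sec2}, the whole problem reduces to the two-variable behaviour of $F(p,q)$ as $p,q\to\infty$ along the ray $q/p\to \ell/k$. The plan has four steps: identify the scaling of $F$, obtain an error bound uniform on that ray, convert ceilings to $j$, and then prove the rigidity part.

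\textbf{Step 1: scaling of $F$.} I expect $F(p,q)$ to be an explicit finite sum over a staircase of lattice points, with summand a rational function of the indices. Because the limit constant is an integral over $x\in(0,\infty)$, I would write $F(p,q)$ as a Riemann sum with mesh $1/j$. Put $p=\lceil j/\ell\rceil\approx j/\ell$ and $q=\lceil j/k\rceil\approx j/k$, and substitute $m=jx$ for the summation index. The summand should then become
\[
\frac{1}{j}\,\frac{x^2}{(x+1/k)^2(x+1/\ell)^2}+O(j^{-2})\ \text{per unit mesh}.
\]
In this form $1/k$ and $1/\ell$ enter as the scaled values of $q/j$ and $p/j$, or as shifts of that kind.

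\textbf{Step 2: uniform error bound.} I would prove a two-variable estimate
\[
F(p,q)=\frac{1}{\ell p}\,G\!\left(\frac{1}{\ell p},\frac{1}{kq}\right)+O\bigl((p+q)^{-2}\bigr),
\]
or something equivalent, where $G$ is the integral $\int_0^\infty x^2\,(x+\alpha)^{-2}(x+\beta)^{-2}\,dx$, suitably normalised. The error comes from a standard Euler--Maclaurin, or midpoint comparison, of sum against integral. The summand is smooth, and its derivative decays like $x^{-3}$ at infinity, while the integrand decays like $x^{-2}$. The total error is therefore $O(j^{-2})$, provided the tail beyond the staircase range is also $O(j^{-2})$.

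\textbf{Step 3: from ceilings to $j$.} Replacing $p$ by $j/\ell+O(1)$ and $q$ by $j/k+O(1)$ perturbs the parameters by $O(1/j)$. Since $G$ is Lipschitz near $(1/k,1/\ell)$, the leading term changes only by $O(j^{-2})$. This gives $\Sigma_j=C_{k,\ell}/j+O(j^{-2})$, so the limit $\lim_{j\to\infty} j\Sigma_j = C_{k,\ell}$ exists.

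\textbf{Main obstacle.} The hard part is Step 2, namely getting error bounds uniform in both indices rather than just pointwise convergence. The cutoff of the staircase sum also has to be shown to contribute only $O(j^{-2})$. I would try a direct summation-by-parts estimate first, using the explicit rational form of $F$.

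\textbf{Step 4: rigidity.} We know $\mathcal D_{k,\ell}=k\mathbb N\cup\ell\mathbb N$, so $\min\mathcal D_{k,\ell}=\min\{k,\ell\}=a$. Next set $\alpha=1/a$ and $\beta=1/b$; $C$ is symmetric in $k,\ell$, so $C_{k,\ell}=C_{a,b}$. I would compute $C_{a,b}$ in closed form by partial fractions. For $\alpha\ne\beta$ this gives a combination of $(\alpha+\beta)/(\alpha-\beta)^2$ and $\alpha\beta\log(\alpha/\beta)/(\alpha-\beta)^3$. The scaling $x\mapsto \beta x$ shows $C=\beta^{-1}h(\alpha/\beta)$ for an explicit $h$. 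It then suffices to show that $\beta\mapsto C$ is strictly monotone on $(0,\alpha)$ for fixed $\alpha$. Differentiating under the integral,
\[
\partial_\beta C=-2\int_0^\infty x^2(x+\alpha)^{-2}(x+\beta)^{-3}\,dx<0.
\]
So $C$ is strictly decreasing in $\beta$, i.e.\ strictly increasing in $b$, and hence injective. Thus $C_{a,b}$ determines $b$ once $a$ is known, which completes the proof.
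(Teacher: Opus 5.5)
Your Step~4 is the paper's rigidity argument: the recalled descent set gives $a=\min\mathcal D_{k,\ell}$, and injectivity in $b$ follows from strict monotonicity of $\mathscr C$ in one argument, obtained by differentiating under the integral sign (Lemma~\ref{lem:C-monotone}, Proposition~\ref{prop:C-scale-shape}). You only need to add the dominated-convergence justification.

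For the decay law you take a genuinely different route. The paper's proof does not compare a sum with an integral. It inserts the imported closed form \eqref{eq:F-separated}, which is valid once $\eta_j-u_j\ge2$; this is where $k\ne\ell$ enters. It combines this with the harmonic-number expansion \eqref{eq:harmonic-difference} to get $\frac{\alpha+\beta}{d^2}-\frac{2\alpha\beta}{d^3}\log\frac\beta\alpha$. Only afterwards does it identify this with $\mathscr C(\alpha,\beta)$ via Lemma~\ref{lem:C-explicit}. Your sum--integral comparison has different advantages. It needs only the defining series, it shows directly why the constant is the integral $C_{k,\ell}$ (it is the scaling limit of the series), and it does not need the separated regime. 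The paper's route, in turn, is the one that extends mechanically to $\mathscr Q$ and to the full periodic hierarchy of Sections~\ref{sec4}--\ref{sec5}.

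Two things in your write-up must be fixed, and the step you flagged as the main obstacle is left undone. First, $F$ is not a finite staircase sum. By \eqref{eq:F-definition} it is the infinite series with summand $s(r)=(r+1)^2/[(r+\mu)(r+\mu+1)(r+\nu)(r+\nu+1)]$, so there is no cutoff to control. Second, the Step~2 display is misnormalised. Since $\ell p\approx kq\approx j$, homogeneity gives $\frac1{\ell p}\mathscr C\bigl(\frac1{\ell p},\frac1{kq}\bigr)=\mathscr C\bigl(1,\frac{\ell p}{kq}\bigr)\to\frac13$. This does not decay, and the ratio $k/\ell$ is lost entirely.

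The correct intermediate statement is $F(\mu,\nu)=\mathscr C(\mu,\nu)+O(j^{-2})$ for $\mu,\nu\asymp j$. Then $\mathscr C(\mu,\nu)=j^{-1}\mathscr C(\mu/j,\nu/j)$, and your Step~3 finishes the argument. The missing estimate is short. With $f(x)=x^2(x+\mu)^{-2}(x+\nu)^{-2}$,
\[
|s(r)-f(r)|\le\frac{2r+1}{(r+\mu)^2(r+\nu)^2}+\frac{(r+1)^2}{(r+\mu)^2(r+\nu)^2}\Bigl(\frac1{r+\mu}+\frac1{r+\nu}\Bigr),
\]
and summing over $r\ge0$ gives $O(j^{-2})$. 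Next, $f(0)=0$ and $f$ increases up to $x=\sqrt{\mu\nu}$ and then decreases. So first-order Euler--Maclaurin gives $\bigl|\sum_{r\ge0}f(r)-\int_0^\infty f\bigr|\le\max f=(\sqrt\mu+\sqrt\nu)^{-4}=O(j^{-2})$. Finally, $\int_0^\infty f=\mathscr C(\mu,\nu)$ exactly.
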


Thus Theorem~\ref{thm:intro-asymptotic-rigidity} replaces the full-sequence recovery in ~\cite{LiuLuZu2026Quasi}
by the two coarse data $\min\mathcal  D_{k,\ell}$ and $C_{k,\ell}$.
The ceiling errors enter at the next asymptotic scale, producing
periodic corrections that also govern the boundary structure of
Yang's generating function.

\begin{theorem}[Boundary structure and reconstruction]
\label{thm:intro-boundary-reconstruction}
For the generating function
\[
\mathcal P_{k,\ell}(t)
:=
p_{M_{k,\ell}}(t)
=
\sum_{j=0}^{\infty}\Sigma_j(M_{k,\ell})t^j,
\]
the coefficient asymptotic expansion determines a hierarchy of
polylogarithmic boundary modes.

The leading term
\[
\Sigma_j(M_{k,\ell})
=
C_{k,\ell}j^{-1}+O(j^{-2})
\]
produces the logarithmic singularity at \(t=1\),
\[
\mathcal P_{k,\ell}(t)
=
C_{k,\ell}
\log\frac1{1-t}
+K_{k,\ell}+o(1),
\]
where $K_{k,\ell}$ is a finite constant.

Furthermore, the periodic higher-order coefficients give rise to
root-of-unity polylogarithmic boundary modes. The leading logarithmic
coefficient together with the second-order boundary support determines
the unordered pair $\{k,\ell\}$, while the boundary limits of the
successively renormalized generating functions recover the Fourier
coefficients of every finite-order periodic asymptotic term.
\end{theorem}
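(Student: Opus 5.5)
We will derive everything from the coefficient expansion
\[
\Sigma_j(M_{k,\ell})=\frac{C_{k,\ell}}{j}+\sum_{r=2}^{R}\frac{\Psi^{(r)}_{k,\ell}(j)}{j^{r}}+O(j^{-R-1}),
\]
where each $\Psi^{(r)}_{k,\ell}$ is $L$-periodic with $L=\operatorname{lcm}(k,\ell)$. The case $R=1$ is the first part of Theorem~\ref{thm:intro-asymptotic-rigidity}, and the case $R=2$ is the periodic correction announced in the abstract. To obtain the higher orders, we will write $\lceil j/\ell\rceil=j/\ell+\theta_\ell(j)$ with $\theta_\ell(j)\in[0,1)$ depending only on $j \bmod \ell$, and similarly for $k$. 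We then insert these into the staircase formula $\Sigma_j=F(\lceil j/\ell\rceil,\lceil j/k\rceil)$ and Taylor expand $F$ (recalled in Section~\ref{sec2}) around $(j/\ell,j/k)$. Each term of order $j^{-r}$ is then a polynomial in $\theta_\ell(j),\theta_k(j)$ with coefficients given by derivatives of the homogeneous leading profile of $F$. Hence it is $L$-periodic.

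\textbf{Logarithmic singularity.} Write $\Sigma_j=C_{k,\ell}/j+e_j$ with $e_j=O(j^{-2})$. Then
\[
\mathcal P_{k,\ell}(t)=\Sigma_0+C_{k,\ell}\log\frac1{1-t}+\sum_{j\ge1}e_jt^j .
\]
The last series converges absolutely and uniformly on $|t|\le1$, so it tends to $\sum_{j\ge 1} e_j$ as $t\to1^-$. This gives the constant $K_{k,\ell}=\Sigma_0+\sum_{j\ge1} e_j$. The radius of convergence is $1$ because $j\Sigma_j\to C_{k,\ell}>0$. Since $C_{k,\ell}>0$, infinitely many coefficients are nonzero, so $\mathcal P_{k,\ell}$ is not a polynomial.

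\textbf{Boundary modes.} Expand each periodic function in a discrete Fourier series,
\[
\Psi^{(r)}(j)=\sum_{s=0}^{L-1}\widehat{\Psi}^{(r)}(s)\,\omega^{sj},\qquad \omega=e^{2\pi i/L}.
\]
The order-$r$ contribution to $\mathcal P_{k,\ell}$ is then $\sum_s\widehat\Psi^{(r)}(s)\operatorname{Li}_r(\omega^s t)$. For $r\ge2$ these are continuous on the closed disk, and their nonsmooth behaviour sits at $t=\omega^{-s}$, with leading singular part of the form $(1-\omega^st)^{r-1}\log(1-\omega^st)$.

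\textbf{Reconstruction.} We renormalize recursively. Set
\[
\mathcal P^{(1)}=\mathcal P_{k,\ell}-C_{k,\ell}\log\frac1{1-t},\qquad
\mathcal P^{(r+1)}=\mathcal P^{(r)}-\sum_s\widehat\Psi^{(r+1)}(s)\operatorname{Li}_{r+1}(\omega^st).
\]
Each $\widehat\Psi^{(r)}(s)$ is recovered as a boundary limit. The cleanest option is to apply the operator $(t\,d/dt)^{r-1}$, which converts $\operatorname{Li}_r$ into $\log\frac{1}{1-\omega^s t}$. Then
\[
\widehat\Psi^{(r)}(s)=\lim_{t\to\omega^{-s}}\frac{(t\,d/dt)^{r-1}\mathcal P^{(r-1)}(t)}{\log\frac1{1-\omega^st}} .
\]
The remaining modes are bounded at $\omega^{-s}$, and the remainder $O(j^{-r-1})$ contributes at most a bounded term after $r-1$ derivatives. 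For the rigidity statement, $C_{k,\ell}$ is read off at $t=1$. The second-order support $\{s:\widehat\Psi^{(2)}(s)\neq0\}$ generates the subgroup of $L$-th roots of unity coming from the frequencies of $\theta_k$ and $\theta_\ell$, namely the $k$-th and $\ell$-th roots of unity. This support should determine $\{k,\ell\}$, or at least $L$ together with $\min\{k,\ell\}$; combined with the monotonicity of $b\mapsto C_{a,b}$ from Theorem~\ref{thm:intro-asymptotic-rigidity}, that suffices.

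\textbf{Main obstacle.} The hardest step is making the second-order support statement precise. We must show that the Fourier coefficients of $\Psi^{(2)}$ are nonzero exactly on the frequencies of the $k$-th and $\ell$-th roots of unity, with no accidental cancellation. Since $\Psi^{(2)}$ is linear in $\theta_k,\theta_\ell$ with coefficients $\partial_1F$, $\partial_2F$ of the profile, this requires nonvanishing of those partial derivatives. The Fourier coefficients of the sawtooth $\theta_m$ are explicit and nonzero at every nontrivial frequency mod $m$, so the only real check is the cancellation at frequencies common to both $k$ and $\ell$. There we would compute the coefficient explicitly from the integral form of $F$ and verify that it does not vanish.
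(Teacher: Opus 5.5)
The genuine gap is in the rigidity step. Before that: your treatment of the logarithmic singularity matches the paper's. Your reconstruction via $(t\,d/dt)^{r-1}$ is a slightly cleaner variant of the paper's argument. Since $(t\,d/dt)^{r-1}\operatorname{Li}_r(\omega^s t)=\operatorname{Li}_1(\omega^s t)$ exactly, you avoid the phase factor $\omega^{q(m-1)}$ and Lemma~\ref{lem:polylog-boundary-reconstruction}. The differentiated remainder has $O(j^{-2})$ coefficients, so it is bounded.

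In the rigidity step you propose to read $\min\{k,\ell\}$ off the second-order support and then invert $b\mapsto C_{a,b}$. This cannot work when $a\mid b$. Take $\zeta=\omega^{-q}\neq1$. Lemma~\ref{lem:epsilon-Fourier} gives $\widehat\varepsilon_{m,q}=\zeta/\bigl(m(1-\zeta)\bigr)$ if $\zeta^m=1$, and $0$ otherwise. So at a frequency common to $a$ and $b$, using \eqref{eq:Euler-C} at $(\alpha,\beta)=(1/b,1/a)$,
\[
\widehat V_{2,q}=\frac{\zeta}{1-\zeta}\Bigl(\frac{\mathscr C_1}{b}+\frac{\mathscr C_2}{a}\Bigr)=-\frac{\zeta}{1-\zeta}\,C_{k,\ell}\neq0 .
\]
Your ``main obstacle'' is therefore settled in one line: the support is exactly the set of $a$-th and $b$-th roots of unity. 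For the same reason, when $a\mid b$ the support is just the group of $b$-th roots of unity and carries no information about $a$. For instance, $\{1,4\}$ and $\{2,4\}$ have the same second-order support. So the support determines neither $\{k,\ell\}$ nor $\min\{k,\ell\}$ in this case, and the monotonicity you quote (in $b$ for fixed $a$) cannot be applied.

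The missing idea is to extract the \emph{larger} exponent from the support and invert $C_{k,\ell}$ in the smaller one.
The support always determines $b=\max\{k,\ell\}$ as the maximal order of its points. Every point has order dividing $a$ or $b$. The point $e^{-2\pi i/b}$ lies in the support because at $q=L/b$ only $\varepsilon_b$ contributes; $L/b$ is not a multiple of $L/a$, since $b\nmid a$.
Then, for fixed $b$, the map $x\mapsto\mathscr C(1/b,1/x)$ is strictly increasing on $(0,b)$ because $\partial_2\mathscr C<0$ (Lemma~\ref{lem:C-monotone}). Hence $C_{k,\ell}$ determines $a$. This is the paper's argument in Theorems~\ref{thm:second-boundary-spectrum} and~\ref{thm:boundary-rigidity}.

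Two smaller points.
\begin{itemize}
\item The support must be defined intrinsically, as the set of $\zeta\in\partial\mathbb D$ where the normalized radial limit of $(t\,d/dt)\mathcal P^{(1)}$ is nonzero. This limit vanishes for $\zeta^L\neq1$, cf.~Proposition~\ref{prop:second-boundary-spectrum}. Indexing the support by $s \bmod L$ presupposes the $L$ you want to recover.
\item In your Taylor-expansion derivation of the hierarchy, the $j^{-r}$ coefficient involves derivatives of \emph{all} homogeneous terms in the large-argument expansion of $F$, not just the leading profile; already $\mathscr Q$ appears at order $j^{-2}$. The validity of that real-variable expansion, with differentiated remainders, needs proof. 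The paper sidesteps this by working along residue classes mod $L$. There $u_j=\alpha j+c_r$ and $\eta_j=\beta j+d_r$ exactly, so the closed form \eqref{eq:F-separated} yields the Poincar\'e expansion directly.
\end{itemize}
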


The paper is organized as follows. Section~\ref{sec2} develops the integral
and scaling preliminaries for $F$. Section~\ref{sec3} establishes the leading
asymptotics and the asymptotic rigidity principle. Section~\ref{sec4} analyzes
the periodic second-order correction and the leading descent
amplitudes. Section~\ref{sec5} studies Yang's generating function and develops
the higher-order polylogarithmic boundary hierarchy. Section~\ref{sec6} establishes boundary rigidity and reconstructs the finite-order
periodic asymptotic hierarchy from boundary data.

\section{Integral representations and asymptotic preliminaries}
\label{sec2}

Throughout this paper, we use $\mathbb N=\{1,2,3,\ldots\}$,
and let
\[
M_{k,\ell}
=
[z^k-w^\ell]
\subset H^2(\mathbb D^2),
\qquad
k,\ell\in\mathbb N,\quad k\ne\ell.
\]

We shall use the exact formula for the numerical invariant sequence
obtained in \cite{LiuLuZu2026Quasi}.  For $j\geq1$, set
\begin{equation*}
\label{eq:ceiling-parameters}
A_j
=
\left\lceil\frac{j}{\ell}\right\rceil,
\qquad
B_j
=
\left\lceil\frac{j}{k}\right\rceil.
\end{equation*}
Using \cite[Sec.~3.3, Eq.~(3.15)]{LiuLuZu2026Quasi}, we have
\begin{equation}
\label{eq:Sigma-F-recalled}
\Sigma_j(M_{k,\ell})
=
F(A_j,B_j),
\qquad j\geq1,
\end{equation}
where
\begin{equation}
\label{eq:F-definition}
F(\mu,\nu)
:=
\sum_{r=0}^{\infty}
\frac{(r+1)^2}
{(r+\mu)(r+\mu+1)(r+\nu)(r+\nu+1)},
\qquad
\mu,\nu\geq1.
\end{equation}
The function $F$ is symmetric:$
F(\mu,\nu)=F(\nu,\mu)$.

The purpose of this section is to develop forms of
\eqref{eq:F-definition} that are adapted to large-parameter
asymptotics.  In particular, we isolate a continuous scaling function
which will determine the leading coefficient of the numerical
invariant sequence.

\subsection{An integral representation of \texorpdfstring{\(F\)}{F}}

The following elementary integral representation will be convenient
for the asymptotic analysis.

\begin{lemma}
\label{lem:F-integral}
For all $\mu,\nu\geq1$,
\begin{equation}
\label{eq:F-double-integral}
F(\mu,\nu)
=
\int_0^1\!\!\int_0^1
x^{\mu-1}y^{\nu-1}(1-x)(1-y)
\frac{1+xy}{(1-xy)^3}
\,dx\,dy.
\end{equation}
\end{lemma}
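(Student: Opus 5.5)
The plan is to expand the integrand as a power series in $xy$ and integrate term by term. Everything is nonnegative on $(0,1)^2$, so Tonelli's theorem justifies interchanging sum and integral, and convergence of the series $F$ then gives finiteness of the integral.

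\textbf{Step 1: expand the kernel.} Starting from $\sum_{n\ge0}u^n=(1-u)^{-1}$, we have $\sum_{n\ge0}\binom{n+2}{2}u^n=(1-u)^{-3}$. Hence
\[
\frac{1+u}{(1-u)^3}=\sum_{n\ge0}\Bigl[\tbinom{n+2}{2}+\tbinom{n+1}{2}\Bigr]u^n=\sum_{n\ge0}(n+1)^2u^n .
\]
The identity $\binom{n+2}{2}+\binom{n+1}{2}=(n+1)^2$ is immediate. Set $u=xy$ and write $r=n$.

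\textbf{Step 2: integrate term by term.} Multiply by $x^{\mu-1}y^{\nu-1}(1-x)(1-y)$. The double integral then factors for each $r$ into
\[
\int_0^1 x^{r+\mu-1}(1-x)\,dx\int_0^1 y^{r+\nu-1}(1-y)\,dy .
\]
The elementary beta integral $\int_0^1x^{s-1}(1-x)\,dx=\frac1s-\frac1{s+1}=\frac1{s(s+1)}$ applies with $s=r+\mu\ge1$ and $s=r+\nu\ge1$. This gives the product $\frac{1}{(r+\mu)(r+\mu+1)(r+\nu)(r+\nu+1)}$.

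\textbf{Step 3: sum.} Summing over $r$ with weights $(r+1)^2$ reproduces \eqref{eq:F-definition} exactly. Nonnegativity of every term lets Tonelli's theorem justify the interchange. Since the summand is $O(r^{-2})$, the series converges, and so the integral is finite.

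No step is a real obstacle. The only point requiring care is the justification of the interchange, and positivity settles it, even though the kernel blows up like $(1-xy)^{-3}$ near $(1,1)$. That singularity is tamed by the factors $(1-x)(1-y)$, and in any case it is handled automatically by the Tonelli argument. The argument is valid for real $\mu,\nu\ge1$, not only integers, since only $s>0$ is used in the beta integral.
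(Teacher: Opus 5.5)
Your proposal is correct and follows essentially the same route as the paper: you and the paper both combine the series $\sum_{r\ge0}(r+1)^2(xy)^r=(1+xy)/(1-xy)^3$ with the elementary integrals $\int_0^1x^{s-1}(1-x)\,dx=\frac{1}{s(s+1)}$, and justify the interchange by Tonelli. The only differences are that you run the computation from the integral side rather than the series side and derive the kernel expansion via binomial coefficients instead of quoting it.
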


\begin{proof}
For every $r\geq0$ and $\mu\geq1$,
\[
\int_0^1 x^{r+\mu-1}(1-x)\,dx
=
\frac{1}{r+\mu}
-
\frac{1}{r+\mu+1}
=
\frac{1}{(r+\mu)(r+\mu+1)}.
\]
Similarly,
\[
\int_0^1 y^{r+\nu-1}(1-y)\,dy
=
\frac{1}{(r+\nu)(r+\nu+1)}.
\]

Hence each summand in \eqref{eq:F-definition} may be written as
\[
\begin{aligned}
\frac{(r+1)^2}
{(r+\mu)(r+\mu+1)(r+\nu)(r+\nu+1)}
=
(r+1)^2
\int_0^1\!\!\int_0^1
x^{r+\mu-1}y^{r+\nu-1}
(1-x)(1-y)
\,dx\,dy.
\end{aligned}
\]
All terms are nonnegative, so Tonelli's theorem gives
\[
\begin{aligned}
F(\mu,\nu)
&=
\int_0^1\!\!\int_0^1
x^{\mu-1}y^{\nu-1}(1-x)(1-y)
\sum_{r=0}^{\infty}(r+1)^2(xy)^r
\,dx\,dy.
\end{aligned}
\]

For \(0\leq t<1\),
\[
\sum_{r=0}^{\infty}(r+1)^2t^r
=
\frac{1+t}{(1-t)^3}.
\]
Thus, for \(xy<1\),
\[
\sum_{r=0}^{\infty}(r+1)^2(xy)^r
=
\frac{1+xy}{(1-xy)^3}.
\]
Since \(xy=1\) in \([0,1]^2\) only at the single point
\((1,1)\), which has measure zero, substituting this identity into
the preceding integral gives \eqref{eq:F-double-integral}.
\end{proof}


The integral representation \eqref{eq:F-double-integral} shows that
large values of $\mu$ and $\nu$ force the main contribution to come
from a neighborhood of $(1,1)$.  The corresponding continuous
scaling limit is described by the following function.

\subsection{The continuous scaling function}

For $\alpha,\beta>0$, define
\begin{equation}
\label{eq:C-alpha-beta}
\mathscr C(\alpha,\beta)
=
\int_0^\infty
\frac{x^2}
{(x+\alpha)^2(x+\beta)^2}
\,dx.
\end{equation}
The integral is finite. Indeed,
\[
0\leq
\frac{x^2}{(x+\alpha)^2(x+\beta)^2}
\leq
\begin{cases}
\dfrac{x^2}{\alpha^2\beta^2}, & 0<x\leq1,\\[6pt]
\dfrac{1}{x^2}, & x\geq1.
\end{cases}
\]
Both comparison functions are integrable on their respective
intervals.

\begin{lemma}
\label{lem:C-explicit}
The function $\mathscr C$ is positive and symmetric.  If
$0<\alpha<\beta$, then
\begin{equation}
\label{eq:C-explicit}
\mathscr C(\alpha,\beta)
=
\frac{\alpha+\beta}{(\beta-\alpha)^2}
-
\frac{2\alpha\beta}{(\beta-\alpha)^3}
\log\frac{\beta}{\alpha}.
\end{equation}
On the diagonal,
\begin{equation}
\label{eq:C-diagonal}
\mathscr C(\alpha,\alpha)
=
\frac{1}{3\alpha}.
\end{equation}
Moreover, for every $c>0$,
\begin{equation}
\label{eq:C-homogeneity}
\mathscr C(c\alpha,c\beta)
=
\frac{1}{c}\mathscr C(\alpha,\beta).
\end{equation}
\end{lemma}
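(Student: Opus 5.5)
Positivity and symmetry are read off directly from the definition \eqref{eq:C-alpha-beta}. The integrand is nonnegative and vanishes only at $x=0$, and it is already finite by the comparison bounds given above, so $\mathscr C(\alpha,\beta)>0$. The expression is visibly symmetric under $\alpha\leftrightarrow\beta$, hence so is the integral.

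For homogeneity \eqref{eq:C-homogeneity}, we will substitute $x=cu$ in $\mathscr C(c\alpha,c\beta)$. The integrand becomes
\[
\frac{c^2u^2}{c^4(u+\alpha)^2(u+\beta)^2},
\]
and $dx=c\,du$, which produces exactly the factor $1/c$. Similarly, for the diagonal \eqref{eq:C-diagonal}, homogeneity reduces the claim to $\alpha=1$. Substituting $s=x+1$ gives
\[
\int_1^\infty\frac{(s-1)^2}{s^4}\,ds=\int_1^\infty\bigl(s^{-2}-2s^{-3}+s^{-4}\bigr)\,ds=1-1+\tfrac13=\tfrac13.
\]

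For the explicit formula \eqref{eq:C-explicit} with $0<\alpha<\beta$, set $d=\beta-\alpha$ and use the partial fraction decomposition
\[
\frac{x}{(x+\alpha)(x+\beta)}=\frac{1}{d}\Bigl(\frac{\beta}{x+\beta}-\frac{\alpha}{x+\alpha}\Bigr).
\]
Squaring gives
\[
\frac{1}{d^2}\Bigl(\frac{\beta^2}{(x+\beta)^2}+\frac{\alpha^2}{(x+\alpha)^2}-\frac{2\alpha\beta}{(x+\alpha)(x+\beta)}\Bigr).
\]
Each of the three pieces is integrable on $(0,\infty)$, so we may integrate them separately. The first two pieces give $\beta^2/\beta+\alpha^2/\alpha=\alpha+\beta$. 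For the cross term we use
\[
\frac{1}{(x+\alpha)(x+\beta)}=\frac1d\Bigl(\frac1{x+\alpha}-\frac1{x+\beta}\Bigr),
\]
which yields
\[
\int_0^\infty\frac{dx}{(x+\alpha)(x+\beta)}=\frac1d\Bigl[\log\frac{x+\alpha}{x+\beta}\Bigr]_0^\infty=\frac1d\log\frac\beta\alpha.
\]
Combining the three pieces gives exactly \eqref{eq:C-explicit}.

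The only point requiring care is to avoid splitting into individually divergent integrals; the squared decomposition above avoids this, since each of its pieces decays like $x^{-2}$. As an optional consistency check, letting $\beta\to\alpha$ in \eqref{eq:C-explicit} and expanding the logarithm to third order should recover $1/(3\alpha)$.
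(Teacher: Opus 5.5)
Your proof is correct and is essentially the paper's argument. Squaring the decomposition of $x/((x+\alpha)(x+\beta))$ and splitting the cross term gives exactly the four-term partial fraction decomposition the paper writes down, and your homogeneity and diagonal steps are the same substitutions the paper uses; you also explicitly evaluate $\int_0^\infty t^2(1+t)^{-4}\,dt=1/3$, which the paper leaves unverified.
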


\begin{proof}
Positivity and symmetry follow immediately from
\eqref{eq:C-alpha-beta}.  Assume first that $0<\alpha<\beta$.
Writing $d=\beta-\alpha$, a direct partial fraction decomposition gives
\[
\begin{aligned}
\frac{x^2}
{(x+\alpha)^2(x+\beta)^2}
=
\frac{\alpha^2}{d^2}\frac{1}{(x+\alpha)^2}
+
\frac{\beta^2}{d^2}\frac{1}{(x+\beta)^2}
+
\frac{2\alpha\beta}{d^3}
\left(
\frac{1}{x+\beta}
-
\frac{1}{x+\alpha}
\right).
\end{aligned}
\]
Therefore
\[
\int_0^\infty\frac{dx}{(x+\alpha)^2}
=
\frac1\alpha,
\qquad
\int_0^\infty\frac{dx}{(x+\beta)^2}
=
\frac1\beta,
\]
while
\[
\begin{aligned}
\int_0^\infty
\left(
\frac{1}{x+\beta}
-
\frac{1}{x+\alpha}
\right)\,dx
=
\left[
\log(x+\beta)-\log(x+\alpha)
\right]_{0}^{\infty}
=
-\log\frac{\beta}{\alpha}.
\end{aligned}
\]
Substitution yields \eqref{eq:C-explicit}.

If $\alpha=\beta$, then
\[
\mathscr C(\alpha,\alpha)
=
\int_0^\infty
\frac{x^2}{(x+\alpha)^4}\,dx.
\]
Using $x=\alpha t$,
\[
\mathscr C(\alpha,\alpha)
=
\frac1\alpha
\int_0^\infty
\frac{t^2}{(1+t)^4}\,dt
=
\frac{1}{3\alpha},
\]
which proves \eqref{eq:C-diagonal}.

Finally, the change of variables $x=ct$ in
\eqref{eq:C-alpha-beta} gives
\[
\begin{aligned}
\mathscr C(c\alpha,c\beta)=
\int_0^\infty
\frac{x^2}
{(x+c\alpha)^2(x+c\beta)^2}
\,dx=
\frac1c
\int_0^\infty
\frac{t^2}
{(t+\alpha)^2(t+\beta)^2}
\,dt,
\end{aligned}
\]
which proves \eqref{eq:C-homogeneity}.
\end{proof}

The next observation will be useful in the parameter-recovery
argument.

\begin{lemma}
\label{lem:C-monotone}
For all \(\alpha,\beta>0\),
\[
\frac{\partial\mathscr C}{\partial\alpha}(\alpha,\beta)
=
-2\int_0^\infty
\frac{x^2}{(x+\alpha)^3(x+\beta)^2}\,dx<0,
\]
and
\[
\frac{\partial\mathscr C}{\partial\beta}(\alpha,\beta)
=
-2\int_0^\infty
\frac{x^2}{(x+\alpha)^2(x+\beta)^3}\,dx<0.
\]
Consequently, for fixed \(\beta>0\),
\(\alpha\mapsto\mathscr C(\alpha,\beta)\) is strictly decreasing on
\((0,\infty)\), and, for fixed \(\alpha>0\),
\(\beta\mapsto\mathscr C(\alpha,\beta)\) is strictly decreasing.
\end{lemma}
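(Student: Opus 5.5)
The plan is to differentiate under the integral sign in \eqref{eq:C-alpha-beta} and then read off monotonicity from the sign of the derivative. By the symmetry $\mathscr C(\alpha,\beta)=\mathscr C(\beta,\alpha)$ from Lemma~\ref{lem:C-explicit}, it suffices to treat $\partial/\partial\alpha$; the formula for $\partial/\partial\beta$ then follows by interchanging the roles of the variables.

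Fix $\beta>0$ and a compact interval $[\alpha_0,\alpha_1]\subset(0,\infty)$. The integrand $g(x,\alpha)=x^2(x+\alpha)^{-2}(x+\beta)^{-2}$ is $C^1$ in $\alpha$, with
\[
\partial_\alpha g(x,\alpha)=-\frac{2x^2}{(x+\alpha)^3(x+\beta)^2}.
\]
For $\alpha\in[\alpha_0,\alpha_1]$ this is dominated in absolute value by $h(x)=2x^2(x+\alpha_0)^{-3}(x+\beta)^{-2}$. The function $h$ is integrable: on $(0,1]$ it is bounded by $2x^2/(\alpha_0^3\beta^2)$, and on $[1,\infty)$ it is bounded by $2x^{-3}$, by the same comparison used to show that $\mathscr C$ is finite. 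Hence the standard dominated-convergence criterion for differentiating parameter integrals (equivalently, the mean value theorem applied to difference quotients, followed by dominated convergence) justifies passing $\partial_\alpha$ inside the integral. This gives the stated formula on $(\alpha_0,\alpha_1)$, and therefore on all of $(0,\infty)$.

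Strict negativity follows because the integrand $x^2(x+\alpha)^{-3}(x+\beta)^{-2}$ is continuous and strictly positive on $(0,\infty)$, so its integral is strictly positive. Finally, a function on an interval with everywhere negative derivative is strictly decreasing, by the mean value theorem. This yields both monotonicity claims.

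The only step requiring any care is the justification of differentiation under the integral, namely producing a dominating function that is locally uniform in $\alpha$. The bound $h$ above handles this.
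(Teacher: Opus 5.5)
Your proof is correct and follows essentially the same route as the paper: differentiation under the integral sign, justified by an integrable dominating function that is locally uniform in the parameter, followed by strict positivity of the differentiated integrand and the mean value theorem. The only differences are cosmetic. The paper works in a two-variable neighborhood where both parameters are bounded below by some $m>0$, dominating by $2m^{-5}x^2$ on $(0,1]$ and $2x^{-3}$ on $[1,\infty)$. You instead fix $\beta$ and get the $\beta$-derivative from symmetry, whereas the paper treats both partials directly.
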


\begin{proof}
Fix \(\alpha,\beta>0\).  In a sufficiently small neighborhood of
\((\alpha,\beta)\), both parameters are bounded below by some
\(m>0\).  The absolute values of the partial derivatives of the
integrand in \eqref{eq:C-alpha-beta} are then bounded by
\[
g(x)=
\begin{cases}
2m^{-5}x^2, & 0<x\leq1,\\[1mm]
2x^{-3}, & x\geq1,
\end{cases}
\]
and \(g\in L^1(0,\infty)\).  Hence differentiation under the integral
sign is justified by dominated convergence, giving
\[
\frac{\partial\mathscr C}{\partial\alpha}(\alpha,\beta)
=
-2\int_0^\infty
\frac{x^2}{(x+\alpha)^3(x+\beta)^2}\,dx,
\]
and
\[
\frac{\partial\mathscr C}{\partial\beta}(\alpha,\beta)
=
-2\int_0^\infty
\frac{x^2}{(x+\alpha)^2(x+\beta)^3}\,dx.
\]
Since both integrands are strictly positive for \(x>0\), the two
partial derivatives are strictly negative.  The asserted strict
monotonicity follows immediately.
\end{proof}


\subsection{Separated parameters and ceiling errors}

For the sharp error estimate it is convenient to use the closed form
of $F$ obtained in \cite{LiuLuZu2026Quasi}.  We record only the case
needed for the asymptotic analysis.

Let
$1\leq u<\eta,
\tau=\eta-u$.
If $\tau\geq2$, using \cite[Sec.~3.3, Eq.~(3.18)]{LiuLuZu2026Quasi}, we have
\begin{equation}
\label{eq:F-separated}
F(u,\eta)
=
\frac{
\tau(u+\eta-1)
-
(2u\eta-u-\eta+1)
(H_{\eta-1}-H_{u-1})
}
{\tau(\tau^2-1)},
\end{equation}
where
\[
H_n
=
\sum_{m=1}^{n}\frac1m,
\qquad
H_0=0.
\]
Formula \eqref{eq:F-separated} will be the main discrete input in the large-parameter
analysis. We shall repeatedly use the standard asymptotic expansion
for the harmonic numbers (see, for example,
\cite[Sec.~5.4(ii), Eq.~(5.4.14), and
Sec.~5.11(i), Eq.~(5.11.2)]{NIST2010})
\begin{equation*}
\label{eq:harmonic-asymptotic}
H_n
=
\log n+\gamma+\frac{1}{2n}+O(n^{-2}),
\qquad n\to\infty,
\end{equation*}
where $\gamma$ denotes Euler's constant.

In the form occurring in
\eqref{eq:F-separated}, this becomes
$H_{n-1}
=
\log n+\gamma-\frac{1}{2n}
+O(n^{-2})$.
Consequently, whenever $u,\eta\to\infty$,
\begin{equation}
\label{eq:harmonic-difference}
H_{\eta-1}-H_{u-1}
=
\log\frac{\eta}{u}
+
\frac{1}{2u}
-
\frac{1}{2\eta}
+
O\left(u^{-2}+\eta^{-2}\right).
\end{equation}

In particular, if
\[
u=\alpha t+O(1),
\qquad
\eta=\beta t+O(1),
\qquad
0<\alpha<\beta,
\]
then
\begin{equation}
\label{eq:harmonic-scaled}
H_{\eta-1}-H_{u-1}
=
\log\frac{\beta}{\alpha}
+
O(t^{-1}).
\end{equation}

\begin{remark}
\label{rem:why-separated}
The diagonal and adjacent cases of \(F\), which involve second-order
harmonic sums and hence \(\pi^2\), are relevant to the exact
finite-index formulas in \cite{LiuLuZu2026Quasi}.  For the present asymptotic
problem, let
$a=\min\{k,\ell\},
b=\max\{k,\ell\}$,
and, for \(j\geq1\), set
\[
u_j=\left\lceil\frac{j}{b}\right\rceil,
\qquad
\eta_j=\left\lceil\frac{j}{a}\right\rceil.
\]
Since \(a<b\),
\[
\eta_j-u_j
\geq
\frac{b-a}{ab}\,j-1
\longrightarrow\infty.
\]
Hence \(\eta_j-u_j\geq2\) for all sufficiently large \(j\), so only
the separated regime \eqref{eq:F-separated} is needed for the
asymptotic tail.
\end{remark}


To apply the separated-parameter formula to the
quasi-homogeneous parameters, we now isolate the periodic part
of the ceiling functions.

For $m\in\mathbb N$ and $j\geq1$, define
\begin{equation*}
\label{eq:epsilon-m}
\varepsilon_m(j)
=
\left\lceil\frac jm\right\rceil-\frac jm.
\end{equation*}
Then
\begin{equation}
\label{eq:epsilon-range}
0\leq\varepsilon_m(j)<1,
\end{equation}
and
\begin{equation*}
\label{eq:ceiling-error}
\left\lceil\frac jm\right\rceil
=
\frac jm+\varepsilon_m(j).
\end{equation*}

If
$j=qm+r,
0\leq r\leq m-1$,
then
\begin{equation*}
\label{eq:epsilon-residue}
\varepsilon_m(j)
=
\begin{cases}
0,
& r=0,
\\[1mm]
1-\dfrac rm,
& 1\leq r\leq m-1.
\end{cases}
\end{equation*}

In particular,
$\varepsilon_m(j+m)=\varepsilon_m(j),
j\geq1$.
Thus the discrepancy between the discrete ceiling parameter and its
continuous approximation is a bounded periodic function.

Retain the notation
$a=\min\{k,\ell\},
b=\max\{k,\ell\}$,
introduced in Remark~\ref{rem:why-separated}.  Since
\[
\{A_j,B_j\}
=
\left\{
\left\lceil\frac{j}{a}\right\rceil,
\left\lceil\frac{j}{b}\right\rceil
\right\},
\]
we record the corresponding ordered ceiling parameters as
\begin{equation}
\label{eq:u-eta-j}
u_j
=
\left\lceil\frac jb\right\rceil,
\qquad
\eta_j
=
\left\lceil\frac ja\right\rceil.
\end{equation}
Then
\begin{equation}
\label{eq:u-eta-ceiling-errors}
u_j
=
\frac jb+\varepsilon_b(j),
\qquad
\eta_j
=
\frac ja+\varepsilon_a(j),
\end{equation}
and, by the symmetry of $F$ and \eqref{eq:Sigma-F-recalled}, we have
$\Sigma_j(M_{k,\ell})
=
F(u_j,\eta_j)$.

Consequently,
\begin{equation}
\label{eq:parameter-separation}
\eta_j-u_j
=
\frac{b-a}{ab}\,j
+
\varepsilon_a(j)-\varepsilon_b(j)
=
\frac{b-a}{ab}\,j+O(1).
\end{equation}
This is consistent with the separation established in
Remark~\ref{rem:why-separated}.

Since the ceiling errors are bounded,
\eqref{eq:u-eta-ceiling-errors} gives
\[
u_j=\frac1b\,j+O(1),
\qquad
\eta_j=\frac1a\,j+O(1).
\]
Since \(a<b\), equation \eqref{eq:harmonic-scaled}, with
\(t=j\), \(\alpha=1/b\), and \(\beta=1/a\), yields
\[
H_{\eta_j-1}-H_{u_j-1}
=
\log\frac ba+O(j^{-1}).
\]

Since $\varepsilon_a$ and $\varepsilon_b$ are periodic with periods
$a$ and $b$, respectively, any quantity whose dependence on $j$
occurs only through $\varepsilon_a(j)$ and $\varepsilon_b(j)$ is
$L$-periodic, where
$L:=\operatorname{lcm}(a,b)=\operatorname{lcm}(k,\ell)$.
This periodicity will account for the arithmetic fluctuations beyond
the leading $j^{-1}$ term.



\section{Leading asymptotics and asymptotic rigidity}
\label{sec3}

We now apply the exact representation and the discrete asymptotic
preliminaries of Section~\ref{sec2} to determine the first-order large-index
behavior of $\Sigma_j(M_{k,\ell})$.

\subsection{A bounded-perturbation scaling law for \texorpdfstring{\(F\)}{F}}

The following proposition isolates the asymptotic mechanism that will
be applied to the ceiling parameters.

\begin{proposition}
\label{prop:F-scaled-asymptotic}
Let $0<\alpha<\beta$ and let $M>0$.  Suppose that
$\{u_n\}$ and $\{\eta_n\}$ are sequences of positive integers satisfying
\begin{equation}
\label{eq:bounded-perturbation}
|u_n-\alpha n|\leq M,
\qquad
|\eta_n-\beta n|\leq M
\end{equation}
for all sufficiently large $n$.  Then
\begin{equation}
\label{eq:F-scaled-asymptotic}
F(u_n,\eta_n)
=
\frac{\mathscr C(\alpha,\beta)}{n}
+
O(n^{-2}),
\qquad n\to\infty,
\end{equation}
where the implied constant may depend on
$\alpha,\beta$ and $M$.

In particular,
\begin{equation}
\label{eq:F-scaled-limit}
\lim_{n\to\infty}
nF(u_n,\eta_n)
=
\mathscr C(\alpha,\beta).
\end{equation}
\end{proposition}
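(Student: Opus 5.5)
I will prove \eqref{eq:F-scaled-asymptotic} using the closed form \eqref{eq:F-separated}; the integral representation of Lemma~\ref{lem:F-integral} is not needed here.

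\textbf{Step 1: reduction to the separated regime.} By \eqref{eq:bounded-perturbation}, for large $n$ we have $u_n<\eta_n$. We also have $\tau_n:=\eta_n-u_n=(\beta-\alpha)n+O(1)\to\infty$. Hence $\tau_n\ge2$ eventually, and \eqref{eq:F-separated} applies with $u=u_n$, $\eta=\eta_n$. Write $u_n=\alpha n+s_n$ and $\eta_n=\beta n+t_n$ with $|s_n|,|t_n|\le M$.

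\textbf{Step 2: expansion of each factor.} I expand each factor of \eqref{eq:F-separated} to relative precision $O(n^{-1})$:
\[
\tau_n(\tau_n^2-1)=(\beta-\alpha)^3n^3\bigl(1+O(n^{-1})\bigr),
\]
\[
\tau_n(u_n+\eta_n-1)=(\beta-\alpha)(\alpha+\beta)n^2+O(n),
\]
\[
2u_n\eta_n-u_n-\eta_n+1=2\alpha\beta n^2+O(n).
\]
By \eqref{eq:harmonic-scaled}, the harmonic difference is $\log(\beta/\alpha)+O(n^{-1})$. The numerator is therefore
\[
\Bigl[(\beta-\alpha)(\alpha+\beta)-2\alpha\beta\log\tfrac{\beta}{\alpha}\Bigr]n^2+O(n).
\]
Here the $O(n^{-1})$ error in the logarithm, multiplied by $2\alpha\beta n^2$, contributes only $O(n)$. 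Dividing by the cube expansion gives
\[
F(u_n,\eta_n)=\frac1n\Bigl[\frac{\alpha+\beta}{(\beta-\alpha)^2}-\frac{2\alpha\beta}{(\beta-\alpha)^3}\log\frac\beta\alpha\Bigr]+O(n^{-2}).
\]
By \eqref{eq:C-explicit}, the bracket equals $\mathscr C(\alpha,\beta)$. All implied constants depend only on $\alpha,\beta,M$, since $s_n,t_n$ are uniformly bounded.

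\textbf{Step 3: the limit.} The limit \eqref{eq:F-scaled-limit} follows immediately from \eqref{eq:F-scaled-asymptotic}.

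\textbf{Main obstacle.} The only delicate point is the cancellation in the numerator. A priori, each of its two pieces is $O(n^2)$ and the denominator is $O(n^3)$. I must check that no hidden $n^3$-size term arises, and that the harmonic error is genuinely $O(n^{-1})$ uniformly under bounded perturbations. The latter is justified by \eqref{eq:harmonic-difference}: $\log(\eta_n/u_n)=\log(\beta/\alpha)+O(n^{-1})$ because $\eta_n/u_n=(\beta/\alpha)(1+O(n^{-1}))$, and the terms $\frac1{2u}-\frac1{2\eta}$ are themselves $O(n^{-1})$. Since no cancellation at order $n^2$ is required, no finer expansion is needed.
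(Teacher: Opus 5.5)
Your proposal is correct and follows the paper's proof. The paper likewise uses $\tau_n\to\infty$ to enter the separated regime \eqref{eq:F-separated}, expands the numerator to $\bigl[(\beta-\alpha)(\alpha+\beta)-2\alpha\beta\log(\beta/\alpha)\bigr]n^2+O(n)$ and the denominator to $(\beta-\alpha)^3n^3\bigl(1+O(n^{-1})\bigr)$, and identifies the resulting bracket with $\mathscr C(\alpha,\beta)$ via Lemma~\ref{lem:C-explicit}.
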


\begin{proof}
Write
$u_n=\alpha n+\rho_n,
\eta_n=\beta n+\sigma_n$, by \eqref{eq:bounded-perturbation},
$|\rho_n|\leq M,
|\sigma_n|\leq M$
for all sufficiently large $n$.  Set
$d=\beta-\alpha>0$
and
$\tau_n=\eta_n-u_n$.
Then
\begin{equation}
\label{eq:tau-n-expansion}
\tau_n
=
dn+O(1).
\end{equation}
In particular, $\tau_n\to\infty$, and hence
$1\leq u_n<\eta_n,
\tau_n\geq2$
for all sufficiently large $n$.

We may therefore use \eqref{eq:F-separated}:
\begin{equation}
\label{eq:F-separated-scaled}
F(u_n,\eta_n)
=
\frac{
\tau_n(u_n+\eta_n-1)
-
(2u_n\eta_n-u_n-\eta_n+1)
(H_{\eta_n-1}-H_{u_n-1})
}
{\tau_n(\tau_n^2-1)}.
\end{equation}


Since \eqref{eq:bounded-perturbation} gives
$u_n=\alpha n+O(1),
\eta_n=\beta n+O(1)$,
equation \eqref{eq:harmonic-scaled}, with \(t=n\), yields
\begin{equation}
\label{eq:harmonic-scaled-precise}
H_{\eta_n-1}-H_{u_n-1}
=
\log\frac{\beta}{\alpha}
+
O(n^{-1}).
\end{equation}

Next,
\begin{equation*}
\label{eq:sum-expansion}
u_n+\eta_n-1
=
(\alpha+\beta)n+O(1),
\end{equation*}
and hence, by \eqref{eq:tau-n-expansion},
\begin{equation*}
\label{eq:first-numerator-expansion}
\tau_n(u_n+\eta_n-1)
=
d(\alpha+\beta)n^2+O(n).
\end{equation*}
Moreover,
\begin{equation}
\label{eq:product-expansion}
2u_n\eta_n-u_n-\eta_n+1
=
2\alpha\beta n^2+O(n).
\end{equation}

Combining \eqref{eq:harmonic-scaled-precise} and
\eqref{eq:product-expansion}, we obtain
\begin{equation*}
\label{eq:second-numerator-expansion}
\begin{aligned}
(2u_n\eta_n-u_n-\eta_n+1)
(H_{\eta_n-1}-H_{u_n-1})=
2\alpha\beta
\log\frac{\beta}{\alpha}\,n^2
+
O(n).
\end{aligned}
\end{equation*}
Thus the numerator in \eqref{eq:F-separated-scaled} equals
\begin{equation}
\label{eq:numerator-asymptotic}
\left[
d(\alpha+\beta)
-
2\alpha\beta\log\frac{\beta}{\alpha}
\right]n^2
+
O(n).
\end{equation}

For the denominator, \eqref{eq:tau-n-expansion} gives
$\tau_n^3
=
d^3n^3+O(n^2)$,
and therefore
\begin{equation*}
\label{eq:denominator-asymptotic}
\tau_n(\tau_n^2-1)
=
d^3n^3+O(n^2).
\end{equation*}
Equivalently,
\[
\tau_n(\tau_n^2-1)
=
d^3n^3\left(1+O(n^{-1})\right),
\]
and hence
\[
\frac{1}{\tau_n(\tau_n^2-1)}
=
\frac{1}{d^3n^3}
\left(1+O(n^{-1})\right).
\]

Using this together with
\eqref{eq:numerator-asymptotic}, we obtain
\[
\begin{aligned}
F(u_n,\eta_n)
&=
\frac1n
\left[
\frac{\alpha+\beta}{d^2}
-
\frac{2\alpha\beta}{d^3}
\log\frac{\beta}{\alpha}
\right]
+
O(n^{-2}).
\end{aligned}
\]
Since $d=\beta-\alpha$, Lemma~\ref{lem:C-explicit} shows that
\[
\frac{\alpha+\beta}{d^2}
-
\frac{2\alpha\beta}{d^3}
\log\frac{\beta}{\alpha}
=
\mathscr C(\alpha,\beta).
\]
This proves \eqref{eq:F-scaled-asymptotic}.  Multiplication by $n$
and passage to the limit yield \eqref{eq:F-scaled-limit}.
\end{proof}

\subsection{The invariant tail and its consequences}

We now specialize the preceding scaling law to the numerical
invariant sequence of \(M_{k,\ell}\).  Retain the notation $a=\min\{k,\ell\}$ and $b=\max\{k,\ell\}$.
Since $k\neq\ell$,
we have $1\leq a<b$.
Recall from \eqref{eq:u-eta-j} that
\[
u_j
=
\left\lceil\frac{j}{b}\right\rceil,
\qquad
\eta_j
=
\left\lceil\frac{j}{a}\right\rceil.
\]
By \eqref{eq:u-eta-ceiling-errors},
\begin{equation*}
\label{eq:u-eta-linear-tail}
u_j
=
\frac{j}{b}+O(1),
\qquad
\eta_j
=
\frac{j}{a}+O(1).
\end{equation*}
Since \(\{u_j,\eta_j\}=\{A_j,B_j\}\), it follows from \eqref{eq:Sigma-F-recalled} and the symmetry of \(F\) that
$\Sigma_j(M_{k,\ell})
=
F(u_j,\eta_j)$.

\begin{theorem}
\label{thm:tail-main}
Let
\[
M_{k,\ell}
=
[z^k-w^\ell],
\qquad
k,\ell\in\mathbb N,
\qquad
k\neq\ell.
\]
Then
\begin{equation}
\label{eq:Sigma-tail}
\Sigma_j(M_{k,\ell})
=
\frac{C_{k,\ell}}{j}
+
O(j^{-2}),
\qquad
j\to\infty,
\end{equation}
where
\begin{equation}
\label{eq:Ckl-definition}
C_{k,\ell}
:=
\mathscr C\left(\frac1k,\frac1\ell\right)
=
\int_0^\infty
\frac{x^2}
{\left(x+\frac1k\right)^2
 \left(x+\frac1\ell\right)^2}
\,dx.
\end{equation}
Consequently, we have
\begin{equation}
\label{eq:jSigma-limit}
\lim_{j\to\infty}
j\,\Sigma_j(M_{k,\ell})
=
C_{k,\ell}>0,
\end{equation}
and
\begin{equation}
\label{eq:Ckl-explicit}
C_{k,\ell}
=
\frac{ab(a+b)}{(b-a)^2}
-
\frac{2a^2b^2}{(b-a)^3}
\log\frac{b}{a}.
\end{equation}
\end{theorem}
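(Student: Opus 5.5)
The plan is to apply Proposition~\ref{prop:F-scaled-asymptotic} directly to the ordered ceiling parameters $u_j=\lceil j/b\rceil$ and $\eta_j=\lceil j/a\rceil$, with $n=j$, $\alpha=1/b$ and $\beta=1/a$. Since $1\le a<b$, we have $0<\alpha<\beta$. By \eqref{eq:u-eta-ceiling-errors} and \eqref{eq:epsilon-range},
\[
|u_j-\alpha j|=\varepsilon_b(j)<1,\qquad |\eta_j-\beta j|=\varepsilon_a(j)<1,
\]
so the bounded-perturbation hypothesis \eqref{eq:bounded-perturbation} holds with $M=1$ for every $j\ge1$. We already know that $\Sigma_j(M_{k,\ell})=F(u_j,\eta_j)$, from \eqref{eq:Sigma-F-recalled} and the symmetry of $F$. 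The proposition then gives
\[
\Sigma_j(M_{k,\ell})=\frac{\mathscr C(1/b,1/a)}{j}+O(j^{-2}).
\]
The symmetry of $\mathscr C$ in Lemma~\ref{lem:C-explicit} yields $\mathscr C(1/b,1/a)=\mathscr C(1/k,1/\ell)=C_{k,\ell}$ whichever of $k,\ell$ is larger. This proves \eqref{eq:Sigma-tail}, and the integral form in \eqref{eq:Ckl-definition} is just the definition \eqref{eq:C-alpha-beta}.

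For \eqref{eq:jSigma-limit}, multiply \eqref{eq:Sigma-tail} by $j$. The limit is $C_{k,\ell}$, and this is positive because the integrand in \eqref{eq:C-alpha-beta} is strictly positive on $(0,\infty)$; positivity is also part of Lemma~\ref{lem:C-explicit}.

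For the closed form \eqref{eq:Ckl-explicit}, use \eqref{eq:C-explicit} with $\alpha=1/b<\beta=1/a$. Then
\[
\beta-\alpha=\frac{b-a}{ab},\qquad \alpha+\beta=\frac{a+b}{ab},\qquad \alpha\beta=\frac1{ab},\qquad \frac\beta\alpha=\frac ba.
\]
Substituting these, the first term becomes $\frac{(a+b)/(ab)}{(b-a)^2/(ab)^2}=\frac{ab(a+b)}{(b-a)^2}$. The second term becomes $\frac{2/(ab)}{(b-a)^3/(ab)^3}\log\frac ba=\frac{2a^2b^2}{(b-a)^3}\log\frac ba$. A cleaner alternative is to use the homogeneity \eqref{eq:C-homogeneity} with $c=ab$: this gives $\mathscr C(1/b,1/a)=ab\,\mathscr C(a,b)$, and \eqref{eq:C-explicit} is then applied to $(a,b)$.

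There is no real obstacle, since all the analytic work sits in Proposition~\ref{prop:F-scaled-asymptotic}. The only points needing care are the uniform bound $M=1$ on the ceiling errors and the bookkeeping of the algebraic simplification of \eqref{eq:C-explicit}.
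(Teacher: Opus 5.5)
Your proposal is correct and follows essentially the same route as the paper: apply Proposition~\ref{prop:F-scaled-asymptotic} with $\alpha=1/b$, $\beta=1/a$ to the ceiling parameters $u_j,\eta_j$, use the symmetry of $\mathscr C$ to identify the coefficient with $C_{k,\ell}$, get positivity from the integrand, and substitute into \eqref{eq:C-explicit}. Your explicit choice $M=1$ and the alternative simplification via homogeneity with $c=ab$ are valid small refinements of the same argument.
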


\begin{proof}
By \eqref{eq:u-eta-ceiling-errors},
\[
u_j
=
\frac1b\,j+O(1),
\qquad
\eta_j
=
\frac1a\,j+O(1).
\]

Since
\[
0<\frac1b<\frac1a,
\]
Proposition~\ref{prop:F-scaled-asymptotic}, with
$\alpha=\frac1b,
\beta=\frac1a$,
gives
\[
\Sigma_j(M_{k,\ell})
=
F(u_j,\eta_j)
=
\frac1j
\mathscr C\left(\frac1b,\frac1a\right)
+
O(j^{-2}).
\]
The symmetry of $\mathscr C$ implies
\[
\mathscr C\left(\frac1b,\frac1a\right)
=
\mathscr C\left(\frac1k,\frac1\ell\right),
\]
and hence \eqref{eq:Sigma-tail} and
\eqref{eq:Ckl-definition} follow.

Since the integrand in \eqref{eq:Ckl-definition} is strictly positive
for every $x>0$, we have $C_{k,\ell}>0$.
Multiplying \eqref{eq:Sigma-tail} by $j$ therefore gives
\eqref{eq:jSigma-limit}.

It remains to evaluate the coefficient explicitly.  By~\eqref{eq:C-explicit},
\[
\begin{aligned}
C_{k,\ell}=
\mathscr C\left(\frac1b,\frac1a\right)=
\frac{ab(a+b)}{(b-a)^2}
-
\frac{2a^2b^2}{(b-a)^3}
\log\frac ba,
\end{aligned}
\]
which proves \eqref{eq:Ckl-explicit}.
\end{proof}

\begin{remark}
By symmetry,
$C_{k,\ell}=C_{\ell,k}$.
The bounded periodic ceiling errors do not affect the leading
coefficient; their residue-class dependence first appears in the
lower-order corrections studied in Section~\ref{sec4}.
\end{remark}

Define
\begin{equation}
\label{eq:renormalized-remainder}
r_j^{(k,\ell)}
=
\Sigma_j(M_{k,\ell})
-
\frac{C_{k,\ell}}{j},
\qquad
j\geq1.
\end{equation}
This renormalized remainder will be used later to isolate the leading logarithmic singularity of Yang's generating function.
By Theorem~\ref{thm:tail-main},
\begin{equation}
\label{eq:renormalized-remainder-bound}
r_j^{(k,\ell)}
=
O(j^{-2}).
\end{equation}

\begin{corollary}
\label{cor:renormalized-tail}
The series
\begin{equation}
\label{eq:renormalized-tail-series}
\sum_{j=1}^{\infty}
\left(
\Sigma_j(M_{k,\ell})
-
\frac{C_{k,\ell}}{j}
\right)
\end{equation}
converges absolutely.  If
\begin{equation*}
\label{eq:Rkl-definition}
\mathcal R_{k,\ell}
:=
\sum_{j=1}^{\infty}
\left(
\Sigma_j(M_{k,\ell})
-
\frac{C_{k,\ell}}{j}
\right),
\end{equation*}
then
\begin{equation}
\label{eq:partial-sum-asymptotic}
\sum_{j=0}^{N}\Sigma_j(M_{k,\ell})
=
C_{k,\ell}\log N
+
S_{k,\ell}
+
O(N^{-1}),
\qquad
N\to\infty,
\end{equation}
where
$S_{k,\ell}
=
\Sigma_0(M_{k,\ell})
+
C_{k,\ell}\gamma
+
\mathcal R_{k,\ell}$.
\end{corollary}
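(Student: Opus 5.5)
The plan rests on the remainder bound \eqref{eq:renormalized-remainder-bound} from Theorem~\ref{thm:tail-main}, followed by bookkeeping with harmonic numbers. For absolute convergence of \eqref{eq:renormalized-tail-series}, that bound gives a constant $K$ and an index $j_0$ with $|r_j^{(k,\ell)}|\le Kj^{-2}$ for $j\ge j_0$. The finitely many terms with $j<j_0$ are finite real numbers, and $\sum j^{-2}<\infty$. Hence $\sum_{j\ge1}|r_j^{(k,\ell)}|<\infty$, and $\mathcal R_{k,\ell}$ is well defined.

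For the partial sums, I would first isolate the $j=0$ term. For $N\ge1$,
\[
\sum_{j=0}^{N}\Sigma_j(M_{k,\ell})
=\Sigma_0(M_{k,\ell})+C_{k,\ell}H_N+\sum_{j=1}^{N}r_j^{(k,\ell)},
\]
using \eqref{eq:renormalized-remainder}. The last sum is rewritten as $\mathcal R_{k,\ell}-\sum_{j>N}r_j^{(k,\ell)}$. For $N\ge j_0$, the tail satisfies
\[
\Bigl|\sum_{j>N}r_j^{(k,\ell)}\Bigr|\le K\sum_{j>N}j^{-2}\le K\int_N^\infty x^{-2}\,dx=K/N,
\]
so it is $O(N^{-1})$.

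Next, I would apply the harmonic expansion recalled in Section~\ref{sec2}, $H_N=\log N+\gamma+\tfrac1{2N}+O(N^{-2})$, which gives $C_{k,\ell}H_N=C_{k,\ell}\log N+C_{k,\ell}\gamma+O(N^{-1})$. Combining the three pieces yields \eqref{eq:partial-sum-asymptotic} with $S_{k,\ell}=\Sigma_0(M_{k,\ell})+C_{k,\ell}\gamma+\mathcal R_{k,\ell}$.

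The argument has no real obstacle. The only points needing care are two. First, $\Sigma_0$ must be kept separate, since the staircase formula \eqref{eq:Sigma-F-recalled} and the remainder $r_j^{(k,\ell)}$ are only defined for $j\ge1$. Second, $\Sigma_0(M_{k,\ell})$ must be finite. This holds because $\Sigma_0(M)$ is one of Yang's numerical invariants, which are finite for submodules such as $M_{k,\ell}$; equivalently, one can note that it is a finite constant term of the series.
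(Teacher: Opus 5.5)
Your proposal is correct and follows essentially the same route as the paper: it uses absolute convergence from $r_j^{(k,\ell)}=O(j^{-2})$ and the decomposition $\Sigma_0(M_{k,\ell})+C_{k,\ell}H_N+\sum_{j\le N}r_j^{(k,\ell)}$. It then bounds the tail $\sum_{j>N}r_j^{(k,\ell)}$ by $O(N^{-1})$ and applies $H_N=\log N+\gamma+O(N^{-1})$. Your explicit integral comparison for the tail and your remark on keeping $\Sigma_0$ separate are small clarifications of steps the paper states more briefly.
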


\begin{proof}
Absolute convergence of \eqref{eq:renormalized-tail-series} follows
from \eqref{eq:renormalized-remainder-bound} and
$\sum_{j=1}^{\infty}j^{-2}<\infty$.

Moreover,
\[
\begin{aligned}
\sum_{j=0}^{N}\Sigma_j(M_{k,\ell})
&=
\Sigma_0(M_{k,\ell})
+
C_{k,\ell}H_N
+
\sum_{j=1}^{N}r_j^{(k,\ell)}.
\end{aligned}
\]

Since
$H_N
=
\log N+\gamma+O(N^{-1})$
and
\[
\begin{aligned}
\mathcal R_{k,\ell}
-
\sum_{j=1}^{N}r_j^{(k,\ell)}
=
\sum_{j=N+1}^{\infty}r_j^{(k,\ell)}
=
O\left(
\sum_{j=N+1}^{\infty}j^{-2}
\right)
=
O(N^{-1}),
\end{aligned}
\]
we obtain
\[
\sum_{j=0}^{N}\Sigma_j(M_{k,\ell})
=
C_{k,\ell}\log N
+
\Sigma_0(M_{k,\ell})
+
C_{k,\ell}\gamma
+
\mathcal R_{k,\ell}
+
O(N^{-1}),
\]
which proves \eqref{eq:partial-sum-asymptotic}.
\end{proof}




The first-order analysis above shows that the tail of the numerical
invariant sequence determines the coefficient
$C_{k,\ell}
=
\lim_{j\to\infty} j\Sigma_j(M_{k,\ell})$.
We now examine the parameter information contained in this coefficient.

We shall use one result from our preceding work
\cite[Sec.~4.1]{LiuLuZu2026Quasi}.  The strict descent set was shown there to be
\begin{equation}
\label{eq:descent-set-recalled}
\mathcal D_{k,\ell}
=
\left\{
j\in\mathbb N:
\Sigma_j(M_{k,\ell})
>
\Sigma_{j+1}(M_{k,\ell})
\right\}
=
k\mathbb N\cup\ell\mathbb N.
\end{equation}
Consequently,
\begin{equation}
\label{eq:first-descent-recalled}
\min\mathcal D_{k,\ell}
=
\min\{k,\ell\}.
\end{equation}
These facts are recalled only as input. Our next goal is to show that
the single discrete quantity in the preceding identity, together with
the leading tail coefficient obtained above, already determines the
unordered pair of exponents.

\subsection{The normalized tail profile}

Retain the notation $a=\min\{k,\ell\}$ and $b=\max\{k,\ell\}$. Let
$\rho=\frac ba$.
Since $k\neq\ell$, we have
$1\leq a<b$ and
$\rho>1$.

For $\rho>1$, define
\begin{equation}
\label{eq:Phi-definition}
\Phi(\rho)
=
\mathscr C\left(\frac1\rho,1\right).
\end{equation}
By Lemma~\ref{lem:C-explicit},
\begin{equation*}
\label{eq:Phi-explicit}
\Phi(\rho)
=
\frac{\rho(\rho+1)}{(\rho-1)^2}
-
\frac{2\rho^2}{(\rho-1)^3}
\log\rho,
\qquad
\rho>1.
\end{equation*}

\begin{lemma}
\label{lem:Phi-profile}
The function
\[
\Phi:(1,\infty)\longrightarrow
\left(\frac13,1\right)
\]
is a continuous strictly increasing bijection.  More precisely,
\begin{equation}
\label{eq:Phi-endpoints}
\lim_{\rho\to1^+}\Phi(\rho)
=
\frac13,
\qquad
\lim_{\rho\to\infty}\Phi(\rho)
=
1.
\end{equation}
\end{lemma}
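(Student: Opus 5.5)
Monotonicity will come directly from Lemma~\ref{lem:C-monotone}, continuity from the explicit formula, and the two endpoint limits from the integral representation \eqref{eq:C-alpha-beta}, with no need to expand the closed form.

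\emph{Continuity and strict monotonicity.} On $(1,\infty)$ the closed form for $\Phi(\rho)$ is a composition of rational functions and $\log$ with nonvanishing denominators, so $\Phi$ is continuous (indeed smooth). For monotonicity, write $\Phi(\rho)=\mathscr C(1/\rho,1)$. By Lemma~\ref{lem:C-monotone}, $\alpha\mapsto\mathscr C(\alpha,1)$ is strictly decreasing on $(0,\infty)$. The map $\rho\mapsto1/\rho$ is strictly decreasing, so the composition $\Phi$ is strictly increasing. Equivalently, the chain rule gives
\[
\Phi'(\rho)=-\rho^{-2}\,\partial_\alpha\mathscr C(1/\rho,1)>0 .
\]

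\emph{Endpoint limits.} For $\rho\to1^+$, I use the integrand
\[
x^2(x+1/\rho)^{-2}(x+1)^{-2}.
\]
For $\rho\ge1$ and $1/\rho\ge1/2$, it converges pointwise to $x^2(x+1)^{-4}$ and is dominated by $x^2(x+1/2)^{-2}(x+1)^{-2}$. That dominating function is integrable, being bounded near $0$ and $O(x^{-2})$ at infinity. Dominated convergence and \eqref{eq:C-diagonal} then give $\Phi(\rho)\to\mathscr C(1,1)=1/3$. For $\rho\to\infty$, I use monotone convergence: as $\alpha=1/\rho\downarrow0$ the integrand increases pointwise to $(x+1)^{-2}$, whose integral over $(0,\infty)$ is $1$. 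Hence $\Phi(\rho)\to1$.

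\emph{Range and bijectivity.} A continuous strictly increasing function on $(1,\infty)$ has image equal to the open interval between its endpoint limits, by the intermediate value theorem. Strictness shows that neither $1/3$ nor $1$ is attained. So $\Phi$ is a bijection onto $(1/3,1)$.

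I expect the main potential obstacle to be the $\rho\to1^+$ limit. If one worked from the explicit formula, the expression would be a $0/0$-type cancellation requiring a Taylor expansion of $\log\rho$ to third order. Routing the argument through the integral representation and dominated convergence avoids this entirely. The only point to check there is that the dominating function is uniform over the chosen range of $\rho$.
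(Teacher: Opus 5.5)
Your proof is correct and follows essentially the same route as the paper: strict monotonicity via Lemma~\ref{lem:C-monotone} composed with $\rho\mapsto1/\rho$, the endpoint limits via convergence theorems applied to the integral representation, and bijectivity from continuity plus strict monotonicity. The differences are minor. The paper obtains continuity and both limits from the single uniform bound $x^2(x+1/\rho)^{-2}(x+1)^{-2}\le (x+1)^{-2}$ with dominated convergence. You instead use the closed form for continuity, a local dominating function near $\rho=1$, and monotone convergence as $\rho\to\infty$; all of these steps are valid.
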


\begin{proof}
 From \eqref{eq:Phi-definition},
\begin{equation}
\label{eq:Phi-integral}
\Phi(\rho)
=
\int_0^\infty
\frac{x^2}
{\left(x+\frac1\rho\right)^2(x+1)^2}
\,dx.
\end{equation}

By Lemma~\ref{lem:C-monotone}, the function
\(\alpha\mapsto\mathscr C(\alpha,1)\) is strictly decreasing.
Since \(\rho\mapsto1/\rho\) is strictly decreasing on
\((1,\infty)\), \eqref{eq:Phi-definition} shows that
\(\Phi\) is strictly increasing.

From \eqref{eq:Phi-integral}, for every \(\rho>1\) and \(x>0\),
\[
0\leq
\frac{x^2}
{\left(x+\frac1\rho\right)^2(x+1)^2}
\leq
\frac1{(x+1)^2},
\]
and \((x+1)^{-2}\in L^1(0,\infty)\).
If \(\rho_n\to\rho_0>1\), then the integrand converges pointwise to
\[
\frac{x^2}
{\left(x+\frac1{\rho_0}\right)^2(x+1)^2}.
\]
The dominated convergence theorem therefore gives
\(\Phi(\rho_n)\to\Phi(\rho_0)\), proving that \(\Phi\) is continuous
on \((1,\infty)\).

As $\rho\to1^+$, the same domination gives
\[
\lim_{\rho\to1^+}\Phi(\rho)
=
\int_0^\infty\frac{x^2}{(x+1)^4}\,dx
=
\mathscr C(1,1)
=
\frac13,
\]
where the last identity follows from \eqref{eq:C-diagonal}.
Likewise, as \(\rho\to\infty\),
\[
\frac{x^2}
{\left(x+\frac1\rho\right)^2(x+1)^2}
\longrightarrow
\frac1{(x+1)^2}
\qquad (x>0),
\]
and hence
\[
\lim_{\rho\to\infty}\Phi(\rho)
=
\int_0^\infty\frac{dx}{(x+1)^2}
=
1.
\]
Thus \eqref{eq:Phi-endpoints} holds.

Since \(\Phi\) is continuous and strictly increasing and has endpoint
limits \(1/3\) and \(1\), its range is exactly
\((1/3,1)\). Hence
$\Phi:(1,\infty)\longrightarrow(1/3,1)$
is a bijection.
\end{proof}

\subsection{Scale and shape of the leading tail coefficient}

The preceding profile separates the absolute size of the two
exponents from their relative ratio.

\begin{proposition}
\label{prop:C-scale-shape}
Let
$a=\min\{k,\ell\},
b=\max\{k,\ell\}$.
Then
\begin{equation}
\label{eq:C-scale-shape}
C_{k,\ell}
=
a\,
\Phi\left(\frac ba\right).
\end{equation}
In particular,
\begin{equation}
\label{eq:C-normalized}
\frac{C_{k,\ell}}{a}
=
\Phi\left(\frac ba\right),
\end{equation}
and
\begin{equation}
\label{eq:C-bounds}
\frac a3
<
C_{k,\ell}
<
a.
\end{equation}
For each fixed $a\in\mathbb N$, the quantity $C_{a,b}$ is strictly
increasing as a function of the integer parameter $b>a$.
\end{proposition}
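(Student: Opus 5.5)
The identity \eqref{eq:C-scale-shape} is a direct consequence of the homogeneity \eqref{eq:C-homogeneity} of $\mathscr C$. By Theorem~\ref{thm:tail-main} and the symmetry of $\mathscr C$ (Lemma~\ref{lem:C-explicit}), we have $C_{k,\ell}=\mathscr C(1/b,1/a)$. We write
\[
\left(\frac1b,\frac1a\right)=\frac1a\left(\frac ab,1\right)=\frac1a\left(\frac1\rho,1\right),\qquad \rho=\frac ba>1 .
\]
Applying \eqref{eq:C-homogeneity} with $c=1/a$ then gives $\mathscr C(1/b,1/a)=a\,\mathscr C(1/\rho,1)=a\,\Phi(\rho)$ by the definition \eqref{eq:Phi-definition}. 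This proves \eqref{eq:C-scale-shape}, and dividing by $a\ge1$ yields \eqref{eq:C-normalized}.

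For the bounds \eqref{eq:C-bounds}, we use Lemma~\ref{lem:Phi-profile}. Since $\rho=b/a>1$, the value $\Phi(\rho)$ lies in the open interval $(1/3,1)$, because $\Phi$ maps $(1,\infty)$ bijectively onto that interval. Multiplying by $a>0$ gives $a/3<C_{k,\ell}<a$. The strictness of both inequalities comes from the openness of the range, which in turn rests on the strict monotonicity of $\Phi$ together with its endpoint limits $1/3$ and $1$ being approached but not attained.

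For the final monotonicity claim, we fix $a$ and take integers $a<b<b'$. Then $b/a<b'/a$, and Lemma~\ref{lem:Phi-profile} gives $\Phi(b/a)<\Phi(b'/a)$. Multiplying by $a$ and applying \eqref{eq:C-scale-shape} gives $C_{a,b}<C_{a,b'}$. Alternatively, one can argue directly from Lemma~\ref{lem:C-monotone}: since $C_{a,b}=\mathscr C(1/b,1/a)$ and $b\mapsto 1/b$ is decreasing, the strict decrease of $\mathscr C$ in its first argument gives the same conclusion.

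None of these steps is a real obstacle. The only point needing care is the bookkeeping of the ordering: the explicit formula \eqref{eq:C-explicit} requires its first argument to be the smaller one, and $1/b<1/a$ is precisely what places $\Phi$'s argument $\rho$ in $(1,\infty)$, where Lemma~\ref{lem:Phi-profile} applies.
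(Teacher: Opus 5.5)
Your proof is correct and follows the paper's argument essentially step for step: symmetry of $\mathscr C$ plus homogeneity with $c=1/a$ gives the identity, Lemma~\ref{lem:Phi-profile} gives the strict bounds, and the monotonicity of $\Phi$ composed with $b\mapsto b/a$ gives the final claim. Your alternative monotonicity argument via Lemma~\ref{lem:C-monotone}, using strict decrease of $\mathscr C$ in its first argument, is also valid.
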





\begin{proof}
Since \(\mathscr C\) is symmetric and
\(\{a,b\}=\{k,\ell\}\),
\[
C_{k,\ell}
=
\mathscr C\left(\frac1b,\frac1a\right).
\]
Using the homogeneity relation \eqref{eq:C-homogeneity}
$\mathscr C(c\alpha,c\beta)
=
c^{-1}\mathscr C(\alpha,\beta)$,
with \(c=1/a\), we obtain
\[
C_{k,\ell}
=
a\,\mathscr C\left(\frac ab,1\right)
=
a\,\Phi\left(\frac ba\right),
\]
which proves \eqref{eq:C-scale-shape}. Dividing by \(a\) gives
\eqref{eq:C-normalized}.

Since \(b/a>1\), Lemma~\ref{lem:Phi-profile} yields
\[
\frac13
<
\Phi\left(\frac ba\right)
<
1.
\]
Multiplying by \(a>0\) gives
\eqref{eq:C-bounds}.

Finally, for fixed \(a\), the map
\(b\mapsto b/a\) is strictly increasing, and
\(\Phi\) is strictly increasing by
Lemma~\ref{lem:Phi-profile}. Hence
$C_{a,b}
=
a\,\Phi\left(\frac ba\right)$
is strictly increasing in \(b>a\).
\end{proof}

\begin{remark}
Formula~\eqref{eq:C-scale-shape} separates scale from ratio: the factor $a$ determines
the scale, whereas
$\frac{C_{k,\ell}}{a}$
depends only on $b/a$. Consistently, the homogeneity of $C$ gives
$C_{dk,d\ell}=dC_{k,\ell}, d\in\mathbb N$,
so simultaneous scaling of the two exponents changes the leading
tail coefficient by the same factor while leaving their ratio
unchanged.
\end{remark}

\subsection{Recovery from coarse asymptotic data}

We now combine the first strict descent with the leading tail coefficient.

\begin{theorem}[Asymptotic rigidity]
\label{thm:asymptotic-rigidity}
Let
\[
M_{k,\ell}=[z^k-w^\ell],\qquad k,\ell\in\mathbb N,\quad k\ne\ell.
\]
Recall that
$a=\min\mathcal D_{k,\ell}=\min\{k,\ell\}$.
Then
\[
\frac13<\frac{C_{k,\ell}}{a}<1.
\]
Moreover, the unordered pair $\{k,\ell\}$ is uniquely determined by
\[
\left(
\min\mathcal D_{k,\ell},
\lim_{j\to\infty}j\Sigma_j(M_{k,\ell})
\right).
\]
More precisely,
\begin{equation}
\label{eq:asymptotic-recovery}
\{k,\ell\}
=
\left\{
a,\,
a\Phi^{-1}\left(\frac{C_{k,\ell}}{a}\right)
\right\}.
\end{equation}

Equivalently, if
$k',\ell'\in\mathbb N,
k'\neq\ell'$,
satisfy
$\min\mathcal D_{k,\ell}
=
\min\mathcal D_{k',\ell'}$
and
\[
\lim_{j\to\infty}
j\,\Sigma_j(M_{k,\ell})
=
\lim_{j\to\infty}
j\,\Sigma_j(M_{k',\ell'}),
\]
then
$\{k,\ell\}
=
\{k',\ell'\}$.
\end{theorem}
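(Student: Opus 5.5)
The plan is to assemble the theorem from three earlier results: the descent-set identity \eqref{eq:first-descent-recalled}, the tail limit \eqref{eq:jSigma-limit} of Theorem~\ref{thm:tail-main}, and the scale--shape factorization of Proposition~\ref{prop:C-scale-shape} together with the bijectivity of $\Phi$ from Lemma~\ref{lem:Phi-profile}.

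First, by \eqref{eq:first-descent-recalled} the first coordinate of the data, $\min\mathcal D_{k,\ell}$, equals $a=\min\{k,\ell\}$. By \eqref{eq:jSigma-limit} the second coordinate, $\lim_{j\to\infty} j\Sigma_j(M_{k,\ell})$, exists and equals $C_{k,\ell}$. The bounds $\frac13<C_{k,\ell}/a<1$ are then exactly \eqref{eq:C-bounds} divided by $a$.

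Next comes the recovery formula. By \eqref{eq:C-normalized} we have $C_{k,\ell}/a=\Phi(b/a)$, and this value lies in $(1/3,1)$, which is the range of $\Phi$. Since Lemma~\ref{lem:Phi-profile} shows that $\Phi:(1,\infty)\to(1/3,1)$ is a bijection, $\Phi^{-1}$ is well defined on this value, and
\[
b=a\,\Phi^{-1}\!\left(\frac{C_{k,\ell}}{a}\right).
\]
Because $\{k,\ell\}=\{a,b\}$, this gives \eqref{eq:asymptotic-recovery}, and it shows that $\{k,\ell\}$ is a function of the pair $(\min\mathcal D_{k,\ell},C_{k,\ell})$.

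For the equivalent formulation, suppose $(k',\ell')$ yields the same data. Then $a'=a$, and $C_{k',\ell'}=C_{k,\ell}$ means $a\Phi(b'/a)=a\Phi(b/a)$. Strict monotonicity of $\Phi$ (or directly the last claim of Proposition~\ref{prop:C-scale-shape}) forces $b'=b$, so $\{k',\ell'\}=\{k,\ell\}$.

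No step is a real obstacle here: the analytic content (the tail law and the monotone profile) has already been established. The only point needing care is to confirm that both data are genuine invariants of the sequence $(\Sigma_j)$, namely that the limit exists (Theorem~\ref{thm:tail-main}) and that the minimum of the descent set is attained at $a$ (from \eqref{eq:descent-set-recalled}, since $a\in a\mathbb N$ and every element of $k\mathbb N\cup\ell\mathbb N$ is at least $a$).
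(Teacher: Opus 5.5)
Your proposal is correct and follows essentially the same route as the paper: you identify $a$ from the first strict descent and $C_{k,\ell}$ from the tail limit of Theorem~\ref{thm:tail-main}, and then invert $C_{k,\ell}/a=\Phi(b/a)$ using the bijectivity of $\Phi$ from Lemma~\ref{lem:Phi-profile}. The only cosmetic difference is in the injectivity statement, where you appeal directly to strict monotonicity of $\Phi$, whereas the paper reads it off from the recovery formula \eqref{eq:asymptotic-recovery}.
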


\begin{proof}
By \eqref{eq:first-descent-recalled}, we have
$a
=
\min\mathcal D_{k,\ell}$.
Then
$b>a$.
By Theorem~\ref{thm:tail-main},
$\lim_{j\to\infty}j\Sigma_j(M_{k,\ell})=C_{k,\ell}$.

Hence Proposition~\ref{prop:C-scale-shape} gives
\[
\frac{C_{k,\ell}}{a}
=
\Phi\left(\frac{b}{a}\right).
\]

Since
$\frac ba>1$,
Lemma~\ref{lem:Phi-profile} implies
\[
\frac13
<
\frac{C_{k,\ell}}{a}
<
1.
\]

The function
$\Phi:(1,\infty)
\longrightarrow
\left(\frac13,1\right)$
is bijective, so
$\frac ba
=
\Phi^{-1}
\left(
\frac{C_{k,\ell}}{a}
\right)$.
Therefore
\[
b
=
a
\Phi^{-1}
\left(
\frac{C_{k,\ell}}{a}
\right),
\]
which proves \eqref{eq:asymptotic-recovery}.

For the equivalent injectivity statement, the two assumed equalities
give the same values of $a$ and $C_{k,\ell}$.  Formula
\eqref{eq:asymptotic-recovery} then gives the same unordered pair of
exponents.
\end{proof}

The gain over the exact recovery mechanism in
\cite{LiuLuZu2026Quasi} is particularly transparent in the
divisibility case. If
$\{k,\ell\}=\{a,qa\}, q\geq2$,
then \(\mathcal D_{k,\ell}=a\mathbb N\), so the descent set alone does not
determine \(q\). In the exact recovery argument, the larger exponent
is distinguished by the values of the constant blocks, whereas here,
once \(a\) is known, the single leading tail coefficient \(C_{k,\ell}\) is
sufficient.

\begin{remark}
The recovery in Theorem~\ref{thm:asymptotic-rigidity} is necessarily unordered, since both
$\min\mathcal D_{k,\ell}$ and $C_{k,\ell}$ are invariant under
$(k,\ell)\longleftrightarrow(\ell,k)$.
Moreover, Theorem~\ref{thm:asymptotic-rigidity} is a sufficiency statement: it does not assert
that either datum alone is sufficient, or that the pair of data is
minimal. 
\end{remark}


\section{Second-order arithmetic fluctuations and descent amplitudes}
\label{sec4}

The leading tail coefficient $C_{k,\ell}$ does not see the bounded
ceiling errors.  We now pass to the next asymptotic scale, where the
arithmetic staircase becomes visible.  Set
\[
\alpha=\frac1b,
\qquad
\beta=\frac1a.
\]
Then $0<\alpha<\beta$.

We continue to use the separated-parameter formula~\eqref{eq:F-separated} for $F$.  The aim here is not to obtain another exact
formula, but to extract its second-order behavior when both arguments
grow linearly.

\subsection{A two-term scaling expansion}

For $0<\alpha<\beta$, define
\begin{equation}\label{eq:Q-def}
\mathscr Q(\alpha,\beta)
=
\frac{(\alpha+\beta)\log(\beta/\alpha)-2(\beta-\alpha)}
{(\beta-\alpha)^3}.
\end{equation}

We first establish a two-term scaling expansion for general
integer parameter sequences; the ceiling parameters
\eqref{eq:u-eta-j} will be substituted into this result
in the next subsection.

\begin{proposition}
\label{prop:F-two-term-scaling}
Let $\{u_n\}$ and $\{\eta_n\}$ be sequences of positive integers such
that
\[
\frac{u_n}{n}\longrightarrow\alpha,
\qquad
\frac{\eta_n}{n}\longrightarrow\beta,
\qquad
0<\alpha<\beta.
\]
Set
$\alpha_n=\frac{u_n}{n},
\beta_n=\frac{\eta_n}{n}$.
Then
\begin{equation}\label{eq:F-two-term}
F(u_n,\eta_n)
=
\frac{\mathscr C(\alpha_n,\beta_n)}{n}
+
\frac{\mathscr Q(\alpha_n,\beta_n)}{n^2}
+
O(n^{-3}).
\end{equation}
The estimate is uniform whenever
$(\alpha_n,\beta_n)$ remains in a compact subset of
$\{(x,y)\in(0,\infty)^2:x<y\}$.
\end{proposition}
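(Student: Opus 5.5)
The plan is to rerun the argument of Proposition~\ref{prop:F-scaled-asymptotic} with one more order of precision. The key simplification is to expand around the exact ratios $\alpha_n=u_n/n$ and $\beta_n=\eta_n/n$ rather than around the limits $\alpha,\beta$. Then $u_n=\alpha_n n$ and $\eta_n=\beta_n n$ hold with no bounded perturbation, and every $O(1)$ discrepancy of the earlier proof becomes an exact algebraic term.

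Write $d_n=\beta_n-\alpha_n$ and $L_n=\log(\beta_n/\alpha_n)$. Then $\tau_n=\eta_n-u_n=d_n n\to\infty$, so $\tau_n\ge 2$ for large $n$ and \eqref{eq:F-separated} applies.

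First, I would use \eqref{eq:harmonic-difference} in its sharper form:
\[
H_{\eta_n-1}-H_{u_n-1}=L_n+\frac{1}{2\alpha_n n}-\frac{1}{2\beta_n n}+O(n^{-2})=L_n+\frac{d_n}{2\alpha_n\beta_n n}+O(n^{-2}).
\]
Here the $O(n^{-2})$ term is uniform because $u_n,\eta_n\ge cn$ on the compact set.

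Next, I would expand the two numerator pieces exactly:
\[
\tau_n(u_n+\eta_n-1)=d_n(\alpha_n+\beta_n)n^2-d_n n,
\qquad
2u_n\eta_n-u_n-\eta_n+1=2\alpha_n\beta_n n^2-(\alpha_n+\beta_n)n+1.
\]
Multiplying the second expression by the harmonic difference gives
\[
2\alpha_n\beta_n L_n n^2+\bigl[d_n-(\alpha_n+\beta_n)L_n\bigr]n+O(1).
\]
Subtracting, the numerator becomes
\[
\bigl[d_n(\alpha_n+\beta_n)-2\alpha_n\beta_nL_n\bigr]n^2+\bigl[(\alpha_n+\beta_n)L_n-2d_n\bigr]n+O(1).
\]

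For the denominator, $\tau_n(\tau_n^2-1)=d_n^3n^3\bigl(1-d_n^{-2}n^{-2}\bigr)$. Its reciprocal is therefore $d_n^{-3}n^{-3}\bigl(1+O(n^{-2})\bigr)$, so it contributes no $n^{-2}$ correction. Dividing the numerator by it, Lemma~\ref{lem:C-explicit} identifies the $n^{-1}$ coefficient as $\mathscr C(\alpha_n,\beta_n)$. The $n^{-2}$ coefficient is exactly $\mathscr Q(\alpha_n,\beta_n)$ from \eqref{eq:Q-def}. What remains is $O(n^{-3})$, which gives \eqref{eq:F-two-term}.

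The main obstacle I expect is uniformity. Every implied constant must be controlled only by bounds on $\alpha_n$, $\beta_n$, and $d_n^{-1}$. I would handle this by fixing a compact set $K\subset\{x<y\}$ and choosing $c>0$ with $\alpha_n\ge c$, $d_n\ge c$ and $\beta_n\le c^{-1}$ on $K$. The harmonic-number remainder is then $O(u_n^{-2})=O(c^{-2}n^{-2})$. Every other error term is a polynomial in $\alpha_n$, $\beta_n$ and $L_n$ times an explicit power of $n$ and $d_n^{-1}$, so all of them are bounded uniformly on $K$.
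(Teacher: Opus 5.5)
Your proposal is correct and follows the paper's proof essentially step for step. Both arguments substitute the exact ratios $u_n=n\alpha_n$ and $\eta_n=n\beta_n$ and use the same harmonic-difference expansion with the same exact expansions of the numerator and denominator. Both read off $\mathscr C(\alpha_n,\beta_n)$ and $\mathscr Q(\alpha_n,\beta_n)$ as the $n^{-1}$ and $n^{-2}$ coefficients, and both get uniformity from bounds on $\alpha_n$, $\beta_n$ and $d_n^{-1}$ over a compact set.
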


\begin{proof}
Put
\[
d_n=\beta_n-\alpha_n,
\qquad
L_n=\log\frac{\beta_n}{\alpha_n}.
\]
Since
$\alpha_n\longrightarrow\alpha,
\beta_n\longrightarrow\beta,
\alpha<\beta$,
the quantities $\alpha_n$ and $\beta_n$ are bounded away from zero,
while $d_n$ is bounded away from zero, for all sufficiently large
$n$.  In particular,
$\eta_n-u_n
=
nd_n\geq2$
eventually, and hence the separated-parameter formula ~\eqref{eq:F-separated} applies.

The harmonic-number expansion \eqref{eq:harmonic-difference} from Section~\ref{sec2} gives
\[
\begin{aligned}
H_{\eta_n-1}-H_{u_n-1}
=
\log\frac{\eta_n}{u_n}
+
\frac1{2u_n}
-
\frac1{2\eta_n}
+
O(n^{-2})
=
L_n
+
\frac{d_n}{2n\alpha_n\beta_n}
+
O(n^{-2}).
\end{aligned}
\]

Since
$u_n=n\alpha_n,
\eta_n=n\beta_n$,
we have
\[
\begin{aligned}
(\eta_n-u_n)(u_n+\eta_n-1)
=
nd_n
\bigl(
n(\alpha_n+\beta_n)-1
\bigr)=
n^2d_n(\alpha_n+\beta_n)-nd_n
\end{aligned}
\]
and
\[
2u_n\eta_n-u_n-\eta_n+1
=
2n^2\alpha_n\beta_n
-
n(\alpha_n+\beta_n)
+
1.
\]
Therefore
\[
\begin{aligned}
\bigl(
2u_n\eta_n-u_n-\eta_n+1
\bigr)
\bigl(
H_{\eta_n-1}-H_{u_n-1}
\bigr)=
2n^2\alpha_n\beta_nL_n
+
n\bigl[
d_n-(\alpha_n+\beta_n)L_n
\bigr]
+
O(1).
\end{aligned}
\]
It follows that the numerator of the separated formula equals
\[
\begin{aligned}
n^2
\bigl[
d_n(\alpha_n+\beta_n)
-
2\alpha_n\beta_nL_n
\bigr]+
n
\bigl[
(\alpha_n+\beta_n)L_n-2d_n
\bigr]
+
O(1).
\end{aligned}
\]

The denominator is
\[
\begin{aligned}
(\eta_n-u_n)
\bigl(
(\eta_n-u_n)^2-1
\bigr)=
nd_n(n^2d_n^2-1)=
n^3d_n^3-nd_n.
\end{aligned}
\]
Since $d_n$ is bounded away from zero,
\[
\frac1{n^3d_n^3-nd_n}
=
\frac1{n^3d_n^3}
+
O(n^{-5}).
\]
Combining the preceding estimates with \eqref{eq:F-separated} yields
\[
\begin{aligned}
F(u_n,\eta_n)
=
\frac1n
\left[
\frac{\alpha_n+\beta_n}{d_n^2}
-
\frac{2\alpha_n\beta_nL_n}{d_n^3}
\right]+
\frac1{n^2}
\left[
\frac{
(\alpha_n+\beta_n)L_n-2d_n
}{d_n^3}
\right]
+
O(n^{-3}).
\end{aligned}
\]
The first bracket is \(\mathscr C(\alpha_n,\beta_n)\), while the
second is \(\mathscr Q(\alpha_n,\beta_n)\) by \eqref{eq:Q-def}.

Finally, suppose that \((\alpha_n,\beta_n)\) remains in a compact
subset \(K\) of
$\{(x,y)\in(0,\infty)^2:x<y\}$.
Then, for some \(m,M,\delta>0\) depending only on \(K\),
\[
m\leq\alpha_n,\beta_n\leq M,
\qquad
d_n=\beta_n-\alpha_n\geq\delta.
\]
Thus \(L_n=\log(\beta_n/\alpha_n)\) is uniformly bounded, while
\(\alpha_n\), \(\beta_n\), and \(d_n\) stay uniformly away from the
singular sets occurring in the preceding expansions. Consequently,
the harmonic-number remainder, the reciprocal-denominator expansion,
and hence the final \(O(n^{-3})\) remainder are uniform on \(K\).
This proves the proposition.
\end{proof}


\subsection{The periodic second-order coefficient}

Recall
\[
\varepsilon_m(j)
=
\left\lceil\frac jm\right\rceil-\frac jm.
\]
Then
\[
u_j
=
\left\lceil\frac jb\right\rceil
=
\frac jb+\varepsilon_b(j),
\qquad
\eta_j
=
\left\lceil\frac ja\right\rceil
=
\frac ja+\varepsilon_a(j).
\]
Hence
\[
\frac{u_j}{j}
=
\alpha+\frac{\varepsilon_b(j)}j,
\qquad
\frac{\eta_j}{j}
=
\beta+\frac{\varepsilon_a(j)}j.
\]

Write
\[
\mathscr C_1(\alpha,\beta)
=
\frac{\partial\mathscr C}{\partial\alpha}(\alpha,\beta),
\qquad
\mathscr C_2(\alpha,\beta)
=
\frac{\partial\mathscr C}{\partial\beta}(\alpha,\beta).
\]
By Lemma~\ref{lem:C-monotone},
$\mathscr C_1(\alpha,\beta)<0,
\mathscr C_2(\alpha,\beta)<0$.

Define
\begin{equation}\label{eq:Psi-second-order}
\Psi_{k,\ell}(j)
=
\mathscr Q(\alpha,\beta)
+
\varepsilon_b(j)\mathscr C_1(\alpha,\beta)
+
\varepsilon_a(j)\mathscr C_2(\alpha,\beta).
\end{equation}

\begin{theorem}
\label{thm:second-order-periodic}
Let
$M_{k,\ell}=[z^k-w^\ell],
k\neq\ell$.
Then
\begin{equation}\label{eq:Sigma-two-term}
\Sigma_j(M_{k,\ell})
=
\frac{C_{k,\ell}}{j}
+
\frac{\Psi_{k,\ell}(j)}{j^2}
+
O(j^{-3}),
\qquad j\to\infty.
\end{equation}
Moreover, $\Psi_{k,\ell}$ is periodic, and
$\operatorname{lcm}(k,\ell)$ is a period of $\Psi_{k,\ell}$.
\end{theorem}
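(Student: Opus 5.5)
The plan is to apply Proposition~\ref{prop:F-two-term-scaling} with $n=j$, $u_j=\lceil j/b\rceil$, $\eta_j=\lceil j/a\rceil$. Then Taylor-expand the resulting coefficients around the limiting point $(\alpha,\beta)=(1/b,1/a)$.

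First, the hypotheses hold. We have
\[
\alpha_j=\frac{u_j}{j}=\alpha+\frac{\varepsilon_b(j)}{j},\qquad
\beta_j=\frac{\eta_j}{j}=\beta+\frac{\varepsilon_a(j)}{j},
\]
and $0\le\varepsilon_m<1$. Hence $(\alpha_j,\beta_j)$ converges to $(\alpha,\beta)$ with $\alpha<\beta$. For all large $j$ it lies in a fixed compact neighbourhood $K$ of $(\alpha,\beta)$ inside $\{0<x<y\}$, for instance the closed square of side $2\delta$ with $\delta<(\beta-\alpha)/4$ and $\delta<\alpha/2$. The uniform form of Proposition~\ref{prop:F-two-term-scaling} then gives
\[
\Sigma_j(M_{k,\ell})=F(u_j,\eta_j)=\frac{\mathscr C(\alpha_j,\beta_j)}{j}+\frac{\mathscr Q(\alpha_j,\beta_j)}{j^2}+O(j^{-3}).
\]

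Next, I expand the two coefficients. On $K$, both $\mathscr C$ and $\mathscr Q$ are $C^\infty$, since they are given by the explicit formulas \eqref{eq:C-explicit} and \eqref{eq:Q-def} with $\beta-\alpha$ bounded below and $\alpha,\beta$ bounded away from $0$. Taylor's theorem with a uniform bound on the second derivatives over $K$ gives
\[
\mathscr C(\alpha_j,\beta_j)=\mathscr C(\alpha,\beta)+\frac{\varepsilon_b(j)}{j}\mathscr C_1(\alpha,\beta)+\frac{\varepsilon_a(j)}{j}\mathscr C_2(\alpha,\beta)+O(j^{-2}),
\]
where the error is controlled because the increments $\varepsilon/j$ are $O(j^{-1})$ uniformly. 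Similarly, the mean value theorem gives $\mathscr Q(\alpha_j,\beta_j)=\mathscr Q(\alpha,\beta)+O(j^{-1})$. Substituting these expansions, and using $\mathscr C(\alpha,\beta)=C_{k,\ell}$ by Theorem~\ref{thm:tail-main} together with the symmetry of $\mathscr C$, yields \eqref{eq:Sigma-two-term} with $\Psi_{k,\ell}$ exactly as in \eqref{eq:Psi-second-order}. The only point needing care is that the partial derivatives of the closed form agree with the integral derivatives of Lemma~\ref{lem:C-monotone}. This holds because both describe the same function on the open region $\{0<\alpha<\beta\}$.

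Finally, periodicity. $\Psi_{k,\ell}(j)$ depends on $j$ only through $\varepsilon_a(j)$ and $\varepsilon_b(j)$, which have periods $a$ and $b$ respectively. Hence $L=\operatorname{lcm}(a,b)=\operatorname{lcm}(k,\ell)$ is a common period. The main obstacle is keeping the remainder uniform. I handle it by checking that the compact set $K$ stays inside the region where Proposition~\ref{prop:F-two-term-scaling}'s uniform estimate and the $C^2$ bounds for $\mathscr C$ both hold. Once $K$ is chosen as above, both follow directly.
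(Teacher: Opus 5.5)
Your proposal is correct and follows the paper's proof essentially step by step. You apply Proposition~\ref{prop:F-two-term-scaling} with $n=j$ to the ceiling parameters, Taylor-expand $\mathscr C(\alpha_j,\beta_j)$ to first order and $\mathscr Q(\alpha_j,\beta_j)$ to zeroth order about $(1/b,1/a)$, and read off $\operatorname{lcm}(k,\ell)$-periodicity from $\varepsilon_a$ and $\varepsilon_b$. Your explicit convex compact neighbourhood $K$ only makes precise the uniformity of the remainders that the paper leaves implicit.
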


\begin{proof}
Applying \eqref{eq:F-two-term} with
\[
n=j,\qquad u_n=u_j,\qquad \eta_n=\eta_j,
\]
we obtain
\[
\Sigma_j(M_{k,\ell})
=
\frac{\mathscr C(\alpha_j,\beta_j)}j
+
\frac{\mathscr Q(\alpha_j,\beta_j)}{j^2}
+
O(j^{-3}).
\]

The ceiling errors are bounded, and $\mathscr C$ is smooth in a
neighborhood of $(\alpha,\beta)$.  Taylor's formula therefore gives
\[
\begin{aligned}
\mathscr C(\alpha_j,\beta_j)=
\mathscr C(\alpha,\beta)+
\frac{
\varepsilon_b(j)\mathscr C_1(\alpha,\beta)
+
\varepsilon_a(j)\mathscr C_2(\alpha,\beta)
}{j}
+
O(j^{-2}).
\end{aligned}
\]
Similarly, we have $\mathscr Q(\alpha_j,\beta_j)
=
\mathscr Q(\alpha,\beta)
+
O(j^{-1})$.
Substitution yields
\[
\Sigma_j(M_{k,\ell})
=
\frac{\mathscr C(\alpha,\beta)}j
+
\frac{\Psi_{k,\ell}(j)}{j^2}
+
O(j^{-3}).
\]
Since
$\mathscr C(\alpha,\beta)
=
C_{k,\ell}$,
the asserted expansion follows.

Finally, $\varepsilon_a$ and $\varepsilon_b$ have periods $a$ and
$b$, respectively.  Hence their linear combination
$\Psi_{k,\ell}$ has
$\operatorname{lcm}(a,b)
=
\operatorname{lcm}(k,\ell)$
as a period.
\end{proof}

\subsection{Leading descent amplitudes and their mean}

The exact locations of the strict decreases were already determined
in \cite{LiuLuZu2026Quasi}.  We do not reprove that result here.
Instead, we determine the leading size of the decreases at those
positions.



Define
\[
\Lambda_b
=
-\mathscr C_1\left(\frac1b,\frac1a\right),
\qquad
\Lambda_a
=
-\mathscr C_2\left(\frac1b,\frac1a\right).
\]
By Lemma~\ref{lem:C-monotone},
\[
\Lambda_b
=
2\int_0^\infty
\frac{x^2}
{\left(x+\frac1b\right)^3
 \left(x+\frac1a\right)^2}
\,dx>0,
\qquad
\Lambda_a
=
2\int_0^\infty
\frac{x^2}
{\left(x+\frac1b\right)^2
 \left(x+\frac1a\right)^3}
\,dx>0.
\]

For a statement $E$, let $\mathbf 1_E$ denote its indicator. The ceiling errors satisfy
\begin{equation}\label{eq:ceiling-increment}
\varepsilon_m(j+1)-\varepsilon_m(j)
=
-\frac{1}{m}
+
\mathbf 1_{\{m\mid j\}}.
\end{equation}
Also, differentiating the homogeneity relation
$\mathscr C(c\alpha,c\beta)
=
c^{-1}\mathscr C(\alpha,\beta)$
with respect to $c$ at $c=1$ gives
\begin{equation}\label{eq:Euler-C}
\alpha\mathscr C_1(\alpha,\beta)
+
\beta\mathscr C_2(\alpha,\beta)
=
-\mathscr C(\alpha,\beta).
\end{equation}

At
$\alpha=\frac1b,
\beta=\frac1a$,
identity \eqref{eq:Euler-C} becomes
\[
C_{k,\ell}
=
\frac{\Lambda_b}{b}
+
\frac{\Lambda_a}{a}.
\]

\begin{theorem}
\label{thm:descent-amplitudes}
The successive differences satisfy
\begin{equation}\label{eq:drop-asymptotic}
\Sigma_j(M_{k,\ell})-\Sigma_{j+1}(M_{k,\ell})
=
\frac{
\Lambda_b\mathbf 1_{\{b\mid j\}}
+
\Lambda_a\mathbf 1_{\{a\mid j\}}
}{j^2}
+
O(j^{-3}),
\qquad j\to\infty.
\end{equation}
Thus the exact descent set determines where the drops occur, while
$\Lambda_a$ and $\Lambda_b$ determine their leading asymptotic
amplitudes.
\end{theorem}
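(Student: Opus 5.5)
The plan is to subtract the two-term expansion of Theorem~\ref{thm:second-order-periodic} at consecutive indices $j$ and $j+1$, and then use the increment identity \eqref{eq:ceiling-increment} together with the Euler relation \eqref{eq:Euler-C}.

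By \eqref{eq:Sigma-two-term},
\[
\Sigma_j-\Sigma_{j+1}
=
C_{k,\ell}\Bigl(\frac1j-\frac1{j+1}\Bigr)
+\frac{\Psi_{k,\ell}(j)}{j^2}-\frac{\Psi_{k,\ell}(j+1)}{(j+1)^2}
+O(j^{-3}).
\]
The first term equals $C_{k,\ell}/j^2+O(j^{-3})$. Since $\Psi_{k,\ell}$ is bounded (it is periodic), we have $\Psi_{k,\ell}(j+1)\bigl((j+1)^{-2}-j^{-2}\bigr)=O(j^{-3})$. The difference therefore reduces to
\[
\frac{C_{k,\ell}+\Psi_{k,\ell}(j)-\Psi_{k,\ell}(j+1)}{j^2}+O(j^{-3}).
\]

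By \eqref{eq:Psi-second-order}, the constant $\mathscr Q(\alpha,\beta)$ cancels. Writing $\mathscr C_1=-\Lambda_b$ and $\mathscr C_2=-\Lambda_a$, we get
\[
\Psi_{k,\ell}(j)-\Psi_{k,\ell}(j+1)
=
\Lambda_b\bigl(\varepsilon_b(j+1)-\varepsilon_b(j)\bigr)
+\Lambda_a\bigl(\varepsilon_a(j+1)-\varepsilon_a(j)\bigr).
\]
Applying \eqref{eq:ceiling-increment} turns this into
\[
-\frac{\Lambda_b}{b}-\frac{\Lambda_a}{a}+\Lambda_b\mathbf 1_{\{b\mid j\}}+\Lambda_a\mathbf 1_{\{a\mid j\}}.
\]
The identity $C_{k,\ell}=\Lambda_b/b+\Lambda_a/a$, which is \eqref{eq:Euler-C} at $(\alpha,\beta)=(1/b,1/a)$, cancels the constant part exactly. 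What remains is $\Lambda_b\mathbf 1_{\{b\mid j\}}+\Lambda_a\mathbf 1_{\{a\mid j\}}$, and this gives \eqref{eq:drop-asymptotic}.

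The only delicate point is the bookkeeping of the error terms. The $O(j^{-3})$ remainders at $j$ and at $j+1$ are each $O(j^{-3})$, so their difference is also $O(j^{-3})$; we do not need any cancellation between them. Every other term is either exact or has been shown above to be $O(j^{-3})$.

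For the interpretive sentence, note that $\Lambda_a,\Lambda_b>0$ by Lemma~\ref{lem:C-monotone}. The leading $j^{-2}$ coefficient therefore vanishes exactly off $a\mathbb N\cup b\mathbb N$, and on that set it equals $\Lambda_a$, $\Lambda_b$, or $\Lambda_a+\Lambda_b$. This matches the descent set recalled in \eqref{eq:descent-set-recalled}.
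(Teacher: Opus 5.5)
Your proposal is correct and follows essentially the same route as the paper. It subtracts the two-term expansion at $j$ and $j+1$ to reduce to $\bigl(C_{k,\ell}+\Psi_{k,\ell}(j)-\Psi_{k,\ell}(j+1)\bigr)/j^2+O(j^{-3})$, then uses the ceiling-increment identity and the Euler relation $C_{k,\ell}=\Lambda_b/b+\Lambda_a/a$ to cancel the constant part and leave exactly $\Lambda_b\mathbf 1_{\{b\mid j\}}+\Lambda_a\mathbf 1_{\{a\mid j\}}$.
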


\begin{proof}
By the second-order expansion \eqref{eq:Sigma-two-term},
\[
\Sigma_j(M_{k,\ell})
=
\frac{C_{k,\ell}}j
+
\frac{\Psi_{k,\ell}(j)}{j^2}
+
O(j^{-3}).
\]
Since $\Psi_{k,\ell}$ is periodic, it is bounded. Hence
\[
\begin{aligned}
\Sigma_{j+1}(M_{k,\ell})
=
\frac{C_{k,\ell}}{j+1}
+
\frac{\Psi_{k,\ell}(j+1)}{(j+1)^2}
+
O(j^{-3})
=
\frac{C_{k,\ell}}j
-
\frac{C_{k,\ell}}{j^2}
+
\frac{\Psi_{k,\ell}(j+1)}{j^2}
+
O(j^{-3}).
\end{aligned}
\]
Hence
\[
\begin{aligned}
\Sigma_j(M_{k,\ell})
-
\Sigma_{j+1}(M_{k,\ell})
=
\frac{
C_{k,\ell}
+
\Psi_{k,\ell}(j)
-
\Psi_{k,\ell}(j+1)
}{j^2}
+
O(j^{-3}).
\end{aligned}
\]

Using \eqref{eq:Psi-second-order} together with
\eqref{eq:ceiling-increment}, we have
\[
\begin{aligned}
\Psi_{k,\ell}(j+1)-\Psi_{k,\ell}(j)
=
\mathscr C_1(\alpha,\beta)
\left(
-\frac1b+\mathbf 1_{\{b\mid j\}}
\right)
+
\mathscr C_2(\alpha,\beta)
\left(
-\frac1a+\mathbf 1_{\{a\mid j\}}
\right).
\end{aligned}
\]
Using
\[
C_{k,\ell}
=
-\frac{\mathscr C_1(\alpha,\beta)}b
-
\frac{\mathscr C_2(\alpha,\beta)}a,
\]
we obtain
\[
\begin{aligned}
C_{k,\ell}
+
\Psi_{k,\ell}(j)
-
\Psi_{k,\ell}(j+1)
=
-\mathscr C_1(\alpha,\beta)\mathbf 1_{\{b\mid j\}}
-
\mathscr C_2(\alpha,\beta)\mathbf 1_{\{a\mid j\}}.
\end{aligned}
\]
The definitions of $\Lambda_b$ and $\Lambda_a$ now give the stated
formula.
\end{proof}

The following subsequence limits follow immediately from
\eqref{eq:drop-asymptotic}.

If
$a\mid j,
b\nmid j$,
then, along this infinite subsequence,
\[
j^2
\bigl(
\Sigma_j(M_{k,\ell})-\Sigma_{j+1}(M_{k,\ell})
\bigr)
\longrightarrow
\Lambda_a.
\]

If $a\nmid b$, then there are infinitely many indices satisfying
$b\mid j,
a\nmid j$,
and along this subsequence,
\[
j^2
\bigl(
\Sigma_j(M_{k,\ell})-\Sigma_{j+1}(M_{k,\ell})
\bigr)
\longrightarrow
\Lambda_b.
\]

Along the common multiples,
$\operatorname{lcm}(a,b)\mid j$,
one has
\[
j^2
\bigl(
\Sigma_j(M_{k,\ell})-\Sigma_{j+1}(M_{k,\ell})
\bigr)
\longrightarrow
\Lambda_a+\Lambda_b.
\]

If neither $a$ nor $b$ divides $j$, the preceding work
\cite[Theorem 3.6]{LiuLuZu2026Quasi} gives
$\Sigma_j(M_{k,\ell})
=
\Sigma_{j+1}(M_{k,\ell})$
exactly.  


To relate these local amplitudes to the global tail coefficient,
recall that
$L=\operatorname{lcm}(a,b)$
and define 
\[
\mathcal A_{k,\ell}(j)
=
\Lambda_b\,\mathbf 1_{\{b\mid j\}}
+
\Lambda_a\,\mathbf 1_{\{a\mid j\}}.
\]

\begin{corollary}
\label{cor:mean-descent}
The mean of the leading local descent amplitudes over one period is
the leading tail coefficient:
\[
\frac1L
\sum_{j=1}^{L}
\mathcal A_{k,\ell}(j)
=
C_{k,\ell}.
\]
\end{corollary}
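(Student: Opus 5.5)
The plan is to compute the period average of each indicator directly and then invoke the Euler identity \eqref{eq:Euler-C}. By linearity,
\[
\frac1L\sum_{j=1}^{L}\mathcal A_{k,\ell}(j)
=
\Lambda_b\cdot\frac{\#\{1\le j\le L: b\mid j\}}{L}
+
\Lambda_a\cdot\frac{\#\{1\le j\le L: a\mid j\}}{L}.
\]
Since $a\mid L$ and $b\mid L$, the multiples of $b$ in $\{1,\dots,L\}$ are exactly $b,2b,\dots,(L/b)b$, so the first count equals $L/b$. Likewise the second count equals $L/a$. Hence the average is $\Lambda_b/b+\Lambda_a/a$.

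It then remains to identify this with $C_{k,\ell}$. This is precisely the specialization of \eqref{eq:Euler-C} at $(\alpha,\beta)=(1/b,1/a)$ recorded just before Theorem~\ref{thm:descent-amplitudes}, namely
\[
C_{k,\ell}=\frac{\Lambda_b}{b}+\frac{\Lambda_a}{a}.
\]
For completeness, I would justify \eqref{eq:Euler-C} by differentiating $\mathscr C(c\alpha,c\beta)=c^{-1}\mathscr C(\alpha,\beta)$ in $c$ at $c=1$. This step is legitimate because $\mathscr C$ is $C^1$: Lemma~\ref{lem:C-monotone} gives the partial derivatives by dominated convergence, and they are continuous by the same domination. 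Substituting $\Lambda_b=-\mathscr C_1(1/b,1/a)$ and $\Lambda_a=-\mathscr C_2(1/b,1/a)$ and using $\mathscr C(1/b,1/a)=C_{k,\ell}$ from Theorem~\ref{thm:tail-main} then gives the identity.

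There is essentially no obstacle here. The only care needed is the counting, which uses that both $a$ and $b$ divide $L$; the term at $j=L$, where both indicators equal $1$, is counted correctly in each separate sum. As a consistency check, one can also average the identity $C_{k,\ell}+\Psi_{k,\ell}(j)-\Psi_{k,\ell}(j+1)=\mathcal A_{k,\ell}(j)$ from the proof of Theorem~\ref{thm:descent-amplitudes} over one period. Since $\Psi_{k,\ell}$ is $L$-periodic by Theorem~\ref{thm:second-order-periodic}, the differences telescope to zero, which gives the same conclusion without the explicit count.
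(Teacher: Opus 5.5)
Your proposal is correct and matches the paper's proof. The paper also counts the $L/b$ multiples of $b$ and the $L/a$ multiples of $a$ in $\{1,\dots,L\}$, then applies the specialization $C_{k,\ell}=\Lambda_b/b+\Lambda_a/a$ of \eqref{eq:Euler-C}. Your telescoping check is valid too, but the identity $C_{k,\ell}+\Psi_{k,\ell}(j)-\Psi_{k,\ell}(j+1)=\mathcal A_{k,\ell}(j)$ was itself derived using \eqref{eq:Euler-C}, so that route avoids the count but not the Euler identity.
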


\begin{proof}
Among $1,\ldots,L$, exactly $L/b$ integers are divisible by $b$ and
exactly $L/a$ are divisible by $a$.  Hence
\[
\begin{aligned}
\frac1L
\sum_{j=1}^{L}
\mathcal A_{k,\ell}(j)
=
\frac{\Lambda_b}{b}
+
\frac{\Lambda_a}{a}
=
C_{k,\ell},
\end{aligned}
\]
where the last equality follows from \eqref{eq:Euler-C}.
\end{proof}

Equivalently,
\[
\Psi_{k,\ell}(j+1)-\Psi_{k,\ell}(j)
=
C_{k,\ell}-\mathcal A_{k,\ell}(j).
\]
Thus \(\Psi_{k,\ell}\) is a periodic discrete primitive of the
centered function
$C_{k,\ell}-\mathcal A_{k,\ell}(j)$.


\section{Yang's generating function and the polylogarithmic boundary hierarchy}
\label{sec5}

Recall that Yang's generating function for $M_{k,\ell}$ is
\[
\mathcal P_{k,\ell}(t)
:=
p_{M_{k,\ell}}(t)
=
\sum_{j=0}^{\infty}\Sigma_j(M_{k,\ell})t^j.
\]
We now examine how the first- and second-order tail asymptotics
obtained above are reflected in the boundary behavior of
$\mathcal P_{k,\ell}$.

Throughout Sections~\ref{sec5}--\ref{sec6}, $\operatorname{Log}$ denotes the principal branch of the
complex logarithm. Since $1-t$ lies in the open right half-plane for
$t\in\mathbb D$, we have
\[
-\operatorname{Log}(1-t)
=
\operatorname{Log}\frac{1}{1-t}
=
\sum_{j=1}^{\infty}\frac{t^j}{j},
\qquad |t|<1.
\]
When the argument is positive real, we use $\log$ for the ordinary
real natural logarithm.

\subsection{The leading logarithmic singularity}

Recall from Theorem~\ref{thm:tail-main} that
\[
\Sigma_j(M_{k,\ell})
=
\frac{C_{k,\ell}}{j}
+
O(j^{-2}),
\qquad
C_{k,\ell}>0.
\]

Recall from \eqref{eq:renormalized-remainder}--\eqref{eq:renormalized-remainder-bound} that
\[
r_j^{(k,\ell)}
=
\Sigma_j(M_{k,\ell})-\frac{C_{k,\ell}}{j}
=
O(j^{-2}).
\]
Hence
$\sum_{j\ge1}|r_j^{(k,\ell)}|<\infty$.

\begin{theorem}
\label{thm:generating-leading}
The power series $\mathcal P_{k,\ell}$ has radius of convergence
exactly one.  Moreover,
\[
\mathcal G_{k,\ell}(t)
:=
\mathcal P_{k,\ell}(t)
-
C_{k,\ell}\operatorname{Log}\frac{1}{1-t},
\qquad |t|<1,
\]
extends continuously to the closed unit disk.  Define
\[
K_{k,\ell}
=
\Sigma_0(M_{k,\ell})
+
\sum_{j=1}^{\infty}
\left(
\Sigma_j(M_{k,\ell})
-
\frac{C_{k,\ell}}{j}
\right).
\]
Then the continuous extension satisfies
$\mathcal G_{k,\ell}(1)=K_{k,\ell}$.

Consequently, for real \(t\in(0,1)\), as \(t\to1^{-}\),
\[
\mathcal P_{k,\ell}(t)
=
C_{k,\ell}\log\frac1{1-t}
+
K_{k,\ell}
+
O\left(
(1-t)\log\frac1{1-t}
\right).
\]
\end{theorem}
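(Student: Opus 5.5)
The plan is to split the coefficient sequence into its leading harmonic part and the summable remainder, using the renormalized remainder $r_j^{(k,\ell)}$ from \eqref{eq:renormalized-remainder}. For $|t|<1$ we write
\[
\mathcal P_{k,\ell}(t)
=
\Sigma_0(M_{k,\ell})
+
C_{k,\ell}\sum_{j=1}^{\infty}\frac{t^j}{j}
+
\sum_{j=1}^{\infty}r_j^{(k,\ell)}t^j ,
\]
and we identify the middle sum with $C_{k,\ell}\operatorname{Log}\frac1{1-t}$ via the principal-branch expansion recalled at the start of this section. This gives
\[
\mathcal G_{k,\ell}(t)=\Sigma_0(M_{k,\ell})+\sum_{j\ge1}r_j^{(k,\ell)}t^j .
\]
By \eqref{eq:renormalized-remainder-bound}, $\sum_j|r_j^{(k,\ell)}|<\infty$. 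The Weierstrass M-test then shows that this power series converges uniformly on the closed disk, so its sum is continuous on $\overline{\mathbb D}$. This is the continuous extension of $\mathcal G_{k,\ell}$, and evaluating at $t=1$ gives $\mathcal G_{k,\ell}(1)=K_{k,\ell}$ directly from the definition of $K_{k,\ell}$.

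Next comes the radius of convergence. Since $0\le\Sigma_j(M_{k,\ell})=O(j^{-1})$ by Theorem~\ref{thm:tail-main}, we have $\limsup_j\Sigma_j^{1/j}\le1$, so the radius is at least one. Conversely, \eqref{eq:jSigma-limit} gives $\Sigma_j\ge C_{k,\ell}/(2j)$ for large $j$, hence $\Sigma_j^{1/j}\to1$ and the radius is exactly one. Alternatively, $\mathcal P_{k,\ell}(t)\to\infty$ as $t\to1^-$ because $C_{k,\ell}>0$ while $\mathcal G_{k,\ell}$ stays bounded, so the radius cannot exceed one.

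The main step is the quantitative rate $\mathcal G_{k,\ell}(t)-K_{k,\ell}=O\bigl((1-t)\log\frac1{1-t}\bigr)$ for real $t\to1^-$. We have
\[
K_{k,\ell}-\mathcal G_{k,\ell}(t)=\sum_{j\ge1}r_j^{(k,\ell)}(1-t^j),
\]
and we bound $|r_j|\le c\,j^{-2}$ together with $0\le1-t^j\le\min\{1,j(1-t)\}$. We split at $N=\lfloor 1/(1-t)\rfloor$. The terms with $j\le N$ contribute at most $c(1-t)\sum_{j\le N}j^{-1}=O\bigl((1-t)\log\frac1{1-t}\bigr)$. The terms with $j>N$ contribute at most $c\sum_{j>N}j^{-2}=O(1/N)=O(1-t)$. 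Adding these and restoring $C_{k,\ell}\log\frac1{1-t}$ (which equals $C_{k,\ell}\operatorname{Log}\frac1{1-t}$ for real $t$) yields the stated asymptotic. This splitting is routine once the $O(j^{-2})$ bound is available, so I expect no genuine obstacle. The only point needing care is using the uniform bound on $r_j$ for all $j\ge1$, not just for large $j$; this holds because finitely many initial terms can be absorbed into the constant $c$.
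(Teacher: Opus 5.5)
Your proposal is correct and follows the paper's proof essentially step for step. It uses the same splitting $\mathcal P_{k,\ell}(t)=\Sigma_0(M_{k,\ell})+C_{k,\ell}\operatorname{Log}\frac1{1-t}+\sum_{j\ge1} r_j^{(k,\ell)}t^j$, uniform convergence on $\overline{\mathbb D}$ from $\sum_j|r_j^{(k,\ell)}|<\infty$, and Cauchy--Hadamard for the radius. It also uses the same bound $0\le1-t^j\le\min\{1,j(1-t)\}$, split at $j\approx 1/(1-t)$, to get the $O\bigl((1-t)\log\frac1{1-t}\bigr)$ rate; your explicit lower bound $\Sigma_j\ge C_{k,\ell}/(2j)$ and uniform constant in $|r_j^{(k,\ell)}|\le c\,j^{-2}$ only spell out details the paper leaves implicit.
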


\begin{proof}
Since
$\Sigma_j(M_{k,\ell})
\sim
\frac{C_{k,\ell}}{j}$,
we have
$\lim_{j\to\infty}
\Sigma_j(M_{k,\ell})^{1/j}
=
1$.
The Cauchy--Hadamard formula therefore gives radius of convergence
one.

For $|t|<1$,
\[
\begin{aligned}
\mathcal P_{k,\ell}(t)
=
\Sigma_0(M_{k,\ell})
+
C_{k,\ell}
\sum_{j=1}^{\infty}\frac{t^j}{j}
+
\sum_{j=1}^{\infty}r_j^{(k,\ell)}t^j
=
C_{k,\ell}\operatorname{Log}\frac1{1-t}
+
\Sigma_0(M_{k,\ell})
+
\sum_{j=1}^{\infty}r_j^{(k,\ell)}t^j.
\end{aligned}
\]
The last series converges uniformly on $|t|\leq1$, since
$\sum_{j=1}^{\infty}
|r_j^{(k,\ell)}|
<\infty$.
Hence $\mathcal G_{k,\ell}$ extends continuously to the closed unit
disk, with
$\mathcal G_{k,\ell}(1)
=
K_{k,\ell}$.

It remains to estimate the approach to this boundary value.  Put
$s=1-t,
0<s<1$.
Since
$0\leq1-t^j\leq\min\{1,js\}$
and $r_j^{(k,\ell)}=O(j^{-2})$, we obtain
\[
\begin{aligned}
\left|
\mathcal G_{k,\ell}(t)-K_{k,\ell}
\right|
\leq
\sum_{j=1}^{\infty}
|r_j^{(k,\ell)}|
(1-t^j)
=
O\left(
s\sum_{j\leq s^{-1}}\frac1j
+
\sum_{j>s^{-1}}\frac1{j^2}
\right)
=
O\left(
s\log\frac1s
\right).
\end{aligned}
\]
Substituting $s=1-t$ proves the assertion.
\end{proof}

\begin{remark}
If $S_{k,\ell}$ denotes the constant appearing in the partial-sum
asymptotic from Section~\ref{sec3}, then
$S_{k,\ell}
=
K_{k,\ell}
+
C_{k,\ell}\gamma$.
Thus the constant term in the generating-function expansion differs
from the constant term in the logarithmic partial-sum asymptotic by
the Euler-constant contribution.
\end{remark}

\subsection{The periodic second-order term and its mean mode}
\label{sec5.2}

We now use the second-order expansion
\eqref{eq:Sigma-two-term}.  Recall that
$L=\operatorname{lcm}(k,\ell)$
and set
$\omega=e^{2\pi i/L}$.

The second-order coefficient satisfies
$\Psi_{k,\ell}(j+L)
=
\Psi_{k,\ell}(j)$.
Define its discrete Fourier coefficients by
\[
\widehat{\Psi}_q
=
\frac1L
\sum_{m=1}^{L}
\Psi_{k,\ell}(m)\omega^{-qm},
\qquad
0\leq q\leq L-1.
\]
Then
\[
\Psi_{k,\ell}(j)
=
\sum_{q=0}^{L-1}
\widehat{\Psi}_q\omega^{qj}.
\]

We use the standard dilogarithm
\[
\operatorname{Li}_2(z)
=
\sum_{j=1}^{\infty}
\frac{z^j}{j^2},
\qquad
|z|<1;
\]
see, for example,
\cite[Sec.~25.12(i), Eq.~(25.12.1)]{NIST2010}.

Define
\[
e_j^{(k,\ell)}
=
\Sigma_j(M_{k,\ell})
-
\frac{C_{k,\ell}}{j}
-
\frac{\Psi_{k,\ell}(j)}{j^2}.
\]
By Theorem~\ref{thm:second-order-periodic},
$e_j^{(k,\ell)}
=
O(j^{-3})$.

\begin{lemma}
\label{lem:dilogarithmic-decomposition}
For $|t|<1$,
\begin{equation}\label{eq:Li2-decomposition}
\mathcal P_{k,\ell}(t)=
C_{k,\ell}\operatorname{Log}\frac1{1-t}
+
\Sigma_0(M_{k,\ell})+
\sum_{q=0}^{L-1}
\widehat{\Psi}_q
\operatorname{Li}_2(\omega^qt)
+
\mathcal E_{k,\ell}(t),
\qquad |t|<1,
\end{equation}
where
$\mathcal E_{k,\ell}(t)
=
\sum_{j=1}^{\infty}
e_j^{(k,\ell)}t^j$.

Moreover, $\mathcal E_{k,\ell}$ and
$\mathcal E_{k,\ell}'$ extend continuously to the closed unit disk.
\end{lemma}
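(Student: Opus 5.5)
The plan is to decompose the coefficient sequence termwise, using the definition of $e_j^{(k,\ell)}$, and then resum each piece as a power series on $|t|<1$. For $j\geq1$ we have by definition
\[
\Sigma_j(M_{k,\ell})
=
\frac{C_{k,\ell}}{j}
+
\frac{\Psi_{k,\ell}(j)}{j^2}
+
e_j^{(k,\ell)} .
\]
Multiplying by $t^j$, summing over $j\geq1$, and adding the $j=0$ term $\Sigma_0(M_{k,\ell})$ will give $\mathcal P_{k,\ell}(t)$. This splitting is legitimate on $|t|<1$ because each of the three resulting series converges absolutely there: the coefficients are $O(j^{-1})$, $O(j^{-2})$ (since $\Psi_{k,\ell}$ is periodic, hence bounded), and $O(j^{-3})$ by Theorem~\ref{thm:second-order-periodic}.

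Next, identify each piece. The first series is $C_{k,\ell}\sum_{j\ge1}t^j/j=C_{k,\ell}\operatorname{Log}\frac{1}{1-t}$, by the principal-branch identity recorded at the start of Section~\ref{sec5}. For the second series, insert the finite Fourier expansion $\Psi_{k,\ell}(j)=\sum_{q=0}^{L-1}\widehat{\Psi}_q\omega^{qj}$ and interchange the finite $q$-sum with the $j$-sum; this is justified since each series $\sum_j \omega^{qj}t^j/j^2$ converges absolutely for $|t|<1$. Because $|\omega^q t|<1$, each inner sum equals $\operatorname{Li}_2(\omega^q t)$, which yields the polylogarithmic term in \eqref{eq:Li2-decomposition}. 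The third series is $\mathcal E_{k,\ell}(t)$ by definition.

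For the regularity statement, I would use the bound $|e_j^{(k,\ell)}|\le Cj^{-3}$, valid for $j\ge1$ after enlarging the constant to absorb finitely many initial indices. This gives $\sum_j|e_j^{(k,\ell)}|<\infty$ and $\sum_j j|e_j^{(k,\ell)}|\le C\sum_j j^{-2}<\infty$. By the Weierstrass M-test, both $\sum_j e_j^{(k,\ell)}t^j$ and the termwise derivative $\sum_j je_j^{(k,\ell)}t^{j-1}$ converge uniformly on $|t|\le1$. Their sums are therefore continuous on the closed disk and agree with $\mathcal E_{k,\ell}$ and $\mathcal E_{k,\ell}'$ inside. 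No step is a genuine obstacle. The only points needing care are that the $O(j^{-3})$ bound from Theorem~\ref{thm:second-order-periodic} is asymptotic and must be promoted to a uniform bound for all $j\geq1$, and that the interchange of the $q$- and $j$-sums requires absolute convergence.
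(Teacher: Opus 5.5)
Your proposal is correct and follows the paper's proof essentially step by step. You split $\Sigma_j(M_{k,\ell})$ via the second-order expansion, resum the periodic term through its finite Fourier expansion into the modes $\operatorname{Li}_2(\omega^q t)$, and apply $e_j^{(k,\ell)}=O(j^{-3})$ with the Weierstrass M-test to get continuity of $\mathcal E_{k,\ell}$ and $\mathcal E_{k,\ell}'$ on the closed disk; your extra care in extending the asymptotic bound to all $j\ge1$ and in justifying the termwise manipulations is a correct refinement that the paper leaves implicit.
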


\begin{proof}
By \eqref{eq:Sigma-two-term},
\[
\begin{aligned}
\mathcal P_{k,\ell}(t)
=
\Sigma_0(M_{k,\ell})
+
C_{k,\ell}
\sum_{j=1}^{\infty}\frac{t^j}{j}
+
\sum_{j=1}^{\infty}
\frac{\Psi_{k,\ell}(j)t^j}{j^2}
+
\sum_{j=1}^{\infty}
e_j^{(k,\ell)}t^j.
\end{aligned}
\]
Using the finite Fourier expansion of $\Psi_{k,\ell}$,
\[
\begin{aligned}
\sum_{j=1}^{\infty}
\frac{\Psi_{k,\ell}(j)t^j}{j^2}
=
\sum_{q=0}^{L-1}
\widehat{\Psi}_q
\sum_{j=1}^{\infty}
\frac{(\omega^qt)^j}{j^2}=
\sum_{q=0}^{L-1}
\widehat{\Psi}_q
\operatorname{Li}_2(\omega^qt).
\end{aligned}
\]
This proves the decomposition.

Since
$e_j^{(k,\ell)}
=
O(j^{-3})$,
both series
$\sum_{j=1}^{\infty}|e_j^{(k,\ell)}|$
and
$\sum_{j=1}^{\infty}j|e_j^{(k,\ell)}|$
converge.  Hence the series defining
$\mathcal E_{k,\ell}$ and its first derivative converge uniformly on
the closed unit disk.
\end{proof}


The zero Fourier mode of the periodic second-order coefficient admits
a particularly simple form.

\begin{lemma}
\label{lem:mean-Psi}
One has
\begin{equation}\label{eq:Psi-zero-mode}
\widehat{\Psi}_0
=
\frac1L
\sum_{j=1}^{L}
\Psi_{k,\ell}(j)
=
\frac{C_{k,\ell}}{2}.
\end{equation}
\end{lemma}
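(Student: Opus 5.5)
The plan is to average the defining formula \eqref{eq:Psi-second-order} over one period and then reduce the result to a single integral identity for $\mathscr Q$. Throughout, write $\alpha=1/b$ and $\beta=1/a$.

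\emph{Step 1: averaging.} Since $\mathscr Q(\alpha,\beta)$, $\mathscr C_1(\alpha,\beta)$ and $\mathscr C_2(\alpha,\beta)$ do not depend on $j$, averaging \eqref{eq:Psi-second-order} over $j=1,\dots,L$ only requires the means of $\varepsilon_b$ and $\varepsilon_a$. Because $L$ is a multiple of both $a$ and $b$, the mean of $\varepsilon_m$ over $1\le j\le L$ equals its mean over one block $1\le j\le m$. From the residue description of $\varepsilon_m$, that block mean is
\[
\frac1m\sum_{r=1}^{m-1}\Bigl(1-\frac rm\Bigr)=\frac{m-1}{2m}=\frac12-\frac1{2m}.
\]
Substituting these means gives
\[
\widehat\Psi_0=\mathscr Q+\tfrac12(\mathscr C_1+\mathscr C_2)-\tfrac12\bigl(\alpha\mathscr C_1+\beta\mathscr C_2\bigr).
\]
By the Euler identity \eqref{eq:Euler-C}, the last term equals $+\tfrac12\mathscr C(\alpha,\beta)=\tfrac12 C_{k,\ell}$. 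The lemma is therefore equivalent to
\[
\mathscr Q(\alpha,\beta)=-\tfrac12\bigl(\mathscr C_1+\mathscr C_2\bigr)(\alpha,\beta).
\]

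\emph{Step 2: a diagonal-derivative formula.} To compute $\mathscr C_1+\mathscr C_2$, differentiate $s\mapsto \mathscr C(\alpha+s,\beta+s)$ at $s=0$. After the shift $y=x+s$,
\[
\mathscr C(\alpha+s,\beta+s)=\int_s^\infty \frac{(y-s)^2}{(y+\alpha)^2(y+\beta)^2}\,dy .
\]
The boundary term from the moving lower limit vanishes, because the integrand is zero at $y=s$. Differentiating under the integral sign, justified by the same domination used in Lemma~\ref{lem:C-monotone}, yields
\[
\mathscr C_1+\mathscr C_2=-2\int_0^\infty\frac{y}{(y+\alpha)^2(y+\beta)^2}\,dy .
\]

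\emph{Step 3: evaluating the integral.} It remains to show
\[
\int_0^\infty\frac{y\,dy}{(y+\alpha)^2(y+\beta)^2}=\frac{(\alpha+\beta)\log(\beta/\alpha)-2(\beta-\alpha)}{(\beta-\alpha)^3}=\mathscr Q(\alpha,\beta).
\]
I would follow the partial-fraction computation in the proof of Lemma~\ref{lem:C-explicit}, with $d=\beta-\alpha$. Expand
\[
\frac{y}{(y+\alpha)^2(y+\beta)^2}=\frac{A}{(y+\alpha)^2}+\frac{B}{(y+\beta)^2}+\frac{E}{d^3}\Bigl(\frac1{y+\alpha}-\frac1{y+\beta}\Bigr).
\]
The coefficients are $A=-\alpha/d^2$, $B=-\beta/d^2$ and $E=\alpha+\beta$.

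Integrating term by term, the squared terms contribute $-1/d^2-1/d^2=-2/d^2$. The logarithmic difference contributes $\frac{\alpha+\beta}{d^3}\log(\beta/\alpha)$. Summing these reproduces \eqref{eq:Q-def}.

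\emph{Expected obstacle.} The only real risk is a sign or coefficient slip in this partial-fraction step. To guard against it, I would check the $y^0$ and $y^1$ coefficients of the expansion directly, and also check the diagonal limit $\beta\to\alpha$. In that limit both sides should tend to $\int_0^\infty y\,(y+\alpha)^{-4}\,dy=1/(6\alpha^2)$. The remaining steps are routine bookkeeping.
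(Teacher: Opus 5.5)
Your proposal is correct and follows the paper's proof. You average $\Psi_{k,\ell}$ using the block mean $(m-1)/(2m)$, apply the Euler identity \eqref{eq:Euler-C}, reduce to $\mathscr C_1+\mathscr C_2=-2\mathscr Q$, and identify $\mathscr Q$ with $\int_0^\infty x(x+\alpha)^{-2}(x+\beta)^{-2}\,dx$ via the same partial fractions; the one cosmetic difference is that you get the diagonal-derivative identity by differentiating $\mathscr C(\alpha+s,\beta+s)$ after a shift, whereas the paper integrates $f'$ for $f(x)=x^2(x+\alpha)^{-2}(x+\beta)^{-2}$ over $(0,\infty)$, which is the same translation-invariance fact packaged differently.
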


\begin{proof}
As in Section~\ref{sec4}, set
\[
a=\min\{k,\ell\},
\qquad
b=\max\{k,\ell\},
\qquad
\alpha=\frac1b,
\qquad
\beta=\frac1a.
\]
By \eqref{eq:Psi-second-order}, the second-order coefficient is
\[
\Psi_{k,\ell}(j)
=
\mathscr Q(\alpha,\beta)
+
\varepsilon_b(j)\mathscr C_1(\alpha,\beta)
+
\varepsilon_a(j)\mathscr C_2(\alpha,\beta).
\]
Since
\[
\frac1m
\sum_{j=1}^{m}
\varepsilon_m(j)
=
\frac{m-1}{2m},
\]
averaging over a common period gives
\[
\widehat{\Psi}_0
=
\mathscr Q(\alpha,\beta)
+
\frac{1-\alpha}{2}\mathscr C_1(\alpha,\beta)
+
\frac{1-\beta}{2}\mathscr C_2(\alpha,\beta).
\]

Recall from \eqref{eq:Q-def} that
\[
\mathscr Q(\alpha,\beta)
=
\frac{(\alpha+\beta)\log(\beta/\alpha)-2(\beta-\alpha)}
{(\beta-\alpha)^3}.
\]
We next relate this quantity to the first derivatives of
\(\mathscr C\).  A direct partial-fraction calculation gives
\[
\int_0^\infty
\frac{x\,dx}
{(x+\alpha)^2(x+\beta)^2}
=
\mathscr Q(\alpha,\beta).
\]
Indeed, with $d=\beta-\alpha$,
\[
\begin{aligned}
\frac{x}
{(x+\alpha)^2(x+\beta)^2}
=
-\frac{\alpha}{d^2(x+\alpha)^2}
-\frac{\beta}{d^2(x+\beta)^2}+
\frac{\alpha+\beta}{d^3}
\left(
\frac1{x+\alpha}
-
\frac1{x+\beta}
\right),
\end{aligned}
\]
and integration yields
\[
\frac{
(\alpha+\beta)\log(\beta/\alpha)-2d
}{d^3}
=
\mathscr Q(\alpha,\beta).
\]

On the other hand,
\[
f(x)
=
\frac{x^2}
{(x+\alpha)^2(x+\beta)^2}
\]
satisfies
\[
\begin{aligned}
f'(x)
=
\frac{2x}
{(x+\alpha)^2(x+\beta)^2}
-
\frac{2x^2}
{(x+\alpha)^3(x+\beta)^2}
-
\frac{2x^2}
{(x+\alpha)^2(x+\beta)^3}.
\end{aligned}
\]
Since $f(0)=0$ and $f(x)\to0$ as $x\to\infty$, integration over
$(0,\infty)$ gives
\[
\mathscr C_1(\alpha,\beta)
+
\mathscr C_2(\alpha,\beta)
=
-2\mathscr Q(\alpha,\beta).
\]

Recall from \eqref{eq:Euler-C} that
\[
\alpha\mathscr C_1(\alpha,\beta)
+
\beta\mathscr C_2(\alpha,\beta)
=
-\mathscr C(\alpha,\beta).
\]
Therefore
\[
\begin{aligned}
\widehat{\Psi}_0
=
\mathscr Q
+
\frac12(\mathscr C_1+\mathscr C_2)
-
\frac12(\alpha\mathscr C_1+\beta\mathscr C_2)=
\mathscr Q-\mathscr Q
+
\frac12\mathscr C(\alpha,\beta)
=
\frac{C_{k,\ell}}2.
\end{aligned}
\]
\end{proof}

\subsection{Refined boundary behavior at \texorpdfstring{\(t=1\)}{t=1}}

We can now sharpen the error term in
Theorem~\ref{thm:generating-leading}.

\begin{theorem}
\label{thm:generating-refined}
For real \(t\in(0,1)\), as \(t\to1^{-}\),
\begin{equation}\label{eq:refined-boundary-expansions}
\begin{aligned}
\mathcal P_{k,\ell}(t)
&=
C_{k,\ell}\log\frac{1}{1-t}
+
K_{k,\ell}
-
\frac{C_{k,\ell}}{2}
(1-t)\log\frac{1}{1-t}
+
O(1-t),\\
\mathcal P'_{k,\ell}(t)
&=
\frac{C_{k,\ell}}{1-t}
+
\frac{C_{k,\ell}}{2}
\log\frac{1}{1-t}
+
O(1).
\end{aligned}
\end{equation}
\end{theorem}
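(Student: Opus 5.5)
We start from the dilogarithmic decomposition of Lemma~\ref{lem:dilogarithmic-decomposition} and analyse each term as $t\to1^-$ along the reals. The constant $\Sigma_0(M_{k,\ell})$ and the logarithm $C_{k,\ell}\log\frac1{1-t}$ need no work. For the remainder $\mathcal E_{k,\ell}$, Lemma~\ref{lem:dilogarithmic-decomposition} shows that $\mathcal E_{k,\ell}'$ extends continuously to the closed disk. Hence the mean value theorem gives
\[
\mathcal E_{k,\ell}(t)=\mathcal E_{k,\ell}(1)+O(1-t),\qquad \mathcal E'_{k,\ell}(t)=O(1).
\]

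Next we treat the modes $q\neq0$. Here $\omega^q\neq1$, so $\operatorname{Li}_2(\omega^q t)$ is analytic on a neighbourhood of $t=1$. More concretely, its derivative
\[
\frac{d}{dt}\operatorname{Li}_2(\omega^qt)=t^{-1}\operatorname{Log}\frac1{1-\omega^qt}
\]
is bounded for $t\in[1/2,1]$, because $1-\omega^qt$ stays away from $0$. These modes therefore contribute $\operatorname{Li}_2(\omega^q)+O(1-t)$ to $\mathcal P_{k,\ell}$ and $O(1)$ to $\mathcal P'_{k,\ell}$.

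The only nontrivial term is the zero mode $\widehat\Psi_0\operatorname{Li}_2(t)$. By Lemma~\ref{lem:mean-Psi}, $\widehat\Psi_0=C_{k,\ell}/2$. We will use
\[
\operatorname{Li}_2'(t)=t^{-1}\log\frac1{1-t}=\log\frac1{1-t}+O\!\left((1-t)\log\tfrac1{1-t}\right).
\]
Integrating from $t$ to $1$ gives
\[
\operatorname{Li}_2(t)=\operatorname{Li}_2(1)-\int_t^1 s^{-1}\log\frac1{1-s}\,ds.
\]
The substitution $u=1-s$ turns $\int_0^{1-t}\log(1/u)\,du$ into $(1-t)\log\frac1{1-t}+(1-t)$. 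Hence
\[
\operatorname{Li}_2(t)=\operatorname{Li}_2(1)-(1-t)\log\frac1{1-t}+O(1-t).
\]
This could alternatively be quoted from the standard reflection formula $\operatorname{Li}_2(t)+\operatorname{Li}_2(1-t)=\pi^2/6-\log t\log(1-t)$.

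Assembling the pieces, the expansion of $\mathcal P_{k,\ell}$ is
\[
C_{k,\ell}\log\frac1{1-t}+\Bigl[\Sigma_0+\sum_q\widehat\Psi_q\operatorname{Li}_2(\omega^q)+\mathcal E_{k,\ell}(1)\Bigr]-\frac{C_{k,\ell}}2(1-t)\log\frac1{1-t}+O(1-t).
\]
It remains to identify the bracket with $K_{k,\ell}$. The bracket equals $\lim_{t\to1^-}\bigl(\mathcal P_{k,\ell}(t)-C_{k,\ell}\log\frac1{1-t}\bigr)$, and by Theorem~\ref{thm:generating-leading} this limit is $\mathcal G_{k,\ell}(1)=K_{k,\ell}$. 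Concretely, $\sum_{j\ge1}\Psi_{k,\ell}(j)j^{-2}=\sum_q\widehat\Psi_q\operatorname{Li}_2(\omega^q)$ by absolute convergence.

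For the derivative we differentiate the decomposition term by term:
\[
\mathcal P'_{k,\ell}(t)=\frac{C_{k,\ell}}{1-t}+\frac{C_{k,\ell}}2\,t^{-1}\log\frac1{1-t}+O(1).
\]
Since $t^{-1}-1=O(1-t)$, replacing $t^{-1}$ by $1$ costs only $O\bigl((1-t)\log\frac1{1-t}\bigr)=O(1)$, which gives the second formula.

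The step I expect to be the main obstacle is the zero-mode expansion of $\operatorname{Li}_2$ near $1$, together with checking that the constant is exactly $K_{k,\ell}$. Both are handled above by the integral representation of $\operatorname{Li}_2'$ and by continuity at $t=1$.
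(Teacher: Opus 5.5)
Your proof is correct and follows essentially the paper's own argument. You use the same dilogarithmic decomposition with zero mode $\widehat\Psi_0=C_{k,\ell}/2$, the same boundedness of the nonzero modes and of $\mathcal E_{k,\ell}'$, and the same term-by-term differentiation; the only minor differences are that you derive the expansion of $\operatorname{Li}_2$ at $1$ by integrating $\operatorname{Li}_2'$ instead of quoting the reflection formula, and that you spell out why the constant term equals $K_{k,\ell}$.
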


\begin{proof}
By \eqref{eq:Psi-zero-mode}, the zero Fourier mode is
\[
\widehat{\Psi}_0\operatorname{Li}_2(t)
=
\frac{C_{k,\ell}}2\operatorname{Li}_2(t).
\]

By the classical dilogarithm identity
\[
\operatorname{Li}_2(t)+\operatorname{Li}_2(1-t)
=
\frac{\pi^2}{6}
-\log t\,\log(1-t),
\qquad 0<t<1,
\]
see \cite[Sec.~25.12(i), Eq.~(25.12.6)]{NIST2010}, the standard expansion of the dilogarithm at $1$ is
\[
\operatorname{Li}_2(t)
=
\frac{\pi^2}{6}
-
(1-t)\log\frac1{1-t}
-
(1-t)
+
O\left(
(1-t)^2\log\frac1{1-t}
\right).
\]

For $q\neq0$, one has $1-\omega^q\neq0$, and hence
$\operatorname{Li}_2(\omega^qt)$ and its first derivative remain
finite as \(t\to1^{-}\).  Their variation from the value at $t=1$ is
therefore $O(1-t)$.  Also,
$\mathcal E_{k,\ell}\in C^1(\overline{\mathbb D})$, so
\[
\mathcal E_{k,\ell}(t)
=
\mathcal E_{k,\ell}(1)
+
O(1-t).
\]
Combining these facts with
Theorem~\ref{thm:generating-leading} gives the first expansion.

Finally,
\[
\frac{d}{dt}\operatorname{Li}_2(t)
=
-\frac{\log(1-t)}{t}
=
\frac1t\log\frac1{1-t}.
\]
Thus the zero Fourier mode contributes
\[
\frac{C_{k,\ell}}2
\log\frac1{1-t}
+
O(1)
\]
to the derivative.  The derivatives of all nonzero Fourier modes and
of $\mathcal E_{k,\ell}$ remain bounded at $t=1$, while
\[
\frac{d}{dt}
\left(
C_{k,\ell}\operatorname{Log}\frac1{1-t}
\right)
=
\frac{C_{k,\ell}}{1-t}.
\]
The derivative expansion follows.
\end{proof}

\begin{corollary}
\label{cor:generating-coefficient}
One has
\begin{equation}\label{eq:C-boundary-recovery}
C_{k,\ell}
=
\lim_{t\to1^-}
\frac{\mathcal P_{k,\ell}(t)}
{\log\frac{1}{1-t}}
=
\lim_{t\to1^-}
(1-t)\mathcal P'_{k,\ell}(t).
\end{equation}
Consequently, the asymptotic rigidity theorem of
Section~\ref{sec3} may equivalently be formulated
using the coefficient of the logarithmic boundary singularity of
\(\mathcal P_{k,\ell}\).
\end{corollary}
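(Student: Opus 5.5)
Both limits in \eqref{eq:C-boundary-recovery} will be read off from the two expansions of Theorem~\ref{thm:generating-refined}, taken along real \(t\in(0,1)\). For the first limit, I divide the expansion of \(\mathcal P_{k,\ell}(t)\) by \(\log\frac1{1-t}\), which is positive and tends to \(+\infty\) as \(t\to1^-\). This gives
\[
\frac{\mathcal P_{k,\ell}(t)}{\log\frac1{1-t}}
=
C_{k,\ell}
+
\frac{K_{k,\ell}}{\log\frac1{1-t}}
-
\frac{C_{k,\ell}}2(1-t)
+
O\!\left(\frac{1-t}{\log\frac1{1-t}}\right).
\]
The last three terms tend to zero, because \(K_{k,\ell}\) is a finite constant by Theorem~\ref{thm:generating-leading}. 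Theorem~\ref{thm:generating-leading} alone would in fact suffice here, since its error term \(O((1-t)\log\frac1{1-t})\) is \(o(1)\).

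For the second limit, I multiply the derivative expansion in \eqref{eq:refined-boundary-expansions} by \(1-t\):
\[
(1-t)\mathcal P'_{k,\ell}(t)
=
C_{k,\ell}
+
\frac{C_{k,\ell}}2(1-t)\log\frac1{1-t}
+
O(1-t).
\]
Since \(s\log(1/s)\to0\) as \(s\to0^+\), the right side tends to \(C_{k,\ell}\).

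The only point needing attention is that \eqref{eq:refined-boundary-expansions} is valid for real \(t\to1^-\), which is exactly the mode of approach in the corollary. So there is no real obstacle; the proof is a direct rescaling of the established boundary expansions.

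For the final assertion, I substitute either boundary limit for \(\lim_{j\to\infty}j\Sigma_j(M_{k,\ell})\) in Theorem~\ref{thm:asymptotic-rigidity}. Both boundary limits equal \(C_{k,\ell}\) by the argument above, and \(C_{k,\ell}\) is that limit by Theorem~\ref{thm:tail-main}. Hence the pair consisting of \(\min\mathcal D_{k,\ell}\) and the logarithmic coefficient of \(\mathcal P_{k,\ell}\) at \(t=1\) determines \(\{k,\ell\}\), through the formula \eqref{eq:asymptotic-recovery}.
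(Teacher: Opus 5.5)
Your proposal is correct and matches the paper's proof: both limits are read off from the two expansions in \eqref{eq:refined-boundary-expansions}, and the final assertion is Theorem~\ref{thm:asymptotic-rigidity} with $C_{k,\ell}$ taken from the logarithmic boundary coefficient instead of from $\lim_{j\to\infty} j\Sigma_j(M_{k,\ell})$. Your side remark is also right: Theorem~\ref{thm:generating-leading} already suffices for the first limit, while the derivative limit needs the refined expansion.
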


\begin{proof}
Both limits follow immediately from the two expansions in
\eqref{eq:refined-boundary-expansions}.  The final statement is then
Theorem~\ref{thm:asymptotic-rigidity} with $C_{k,\ell}$ read from the
boundary behavior of the generating function.
\end{proof}

Since $C_{k,\ell}>0$, the generating function has a genuine
logarithmic divergence at $t=1$. In particular, $P_{k,\ell}$ is not
a polynomial for any $k\neq\ell$, thereby giving a negative answer
to Yang's polynomiality question within this quasi-homogeneous family.

\subsection{Residue-class expansions}

For $m\in\mathbb N$, recall
\[
\varepsilon_m(j)
=
\left\lceil\frac jm\right\rceil-\frac jm.
\]
Since $L$ is divisible by both $a$ and $b$, the quantities
$\varepsilon_a(j)$ and $\varepsilon_b(j)$ are constant on every fixed
residue class modulo $L$.

More precisely, fix a residue class \(r\) modulo \(L\) and put
$c_r=\varepsilon_b(r),
d_r=\varepsilon_a(r)$,
where, for the residue class \(r=0\), we set
$c_0=d_0=0$.

Then, whenever
$j\equiv r\pmod L$,
\[
u_j
=
\left\lceil\frac jb\right\rceil
=
\alpha j+c_r
\]
and
\[
\eta_j
=
\left\lceil\frac ja\right\rceil
=
\beta j+d_r.
\]

We shall also use the standard complete asymptotic expansion of the
harmonic numbers.  For every fixed $M\geq1$,
\[
H_{n-1}
=
\log n+\gamma-\frac1{2n}
-
\sum_{s=1}^{M}
\frac{B_{2s}}{2s\,n^{2s}}
+
O(n^{-2M-2}),
\qquad
n\to\infty,
\]
where $B_{2s}$ denotes the Bernoulli numbers.  This follows from
$H_{n-1}=\psi(n)+\gamma$
and the standard asymptotic expansion of the digamma function; see
\cite[Sec.~5.4(ii), Eq.~(5.4.14), and Sec.~5.11(i),
Eq.~(5.11.2)]{NIST2010}.

\begin{theorem}[Periodic Poincar\'e asymptotic hierarchy]
\label{thm:periodic-Poincare-expansion}
There exists a unique sequence of real-valued \(L\)-periodic functions
$V_m^{(k,\ell)}:\mathbb N\longrightarrow\mathbb R,
 m\geq1$,
with the following property: for every fixed integer \(N\geq1\),
\begin{equation}\label{eq:periodic-Poincare}
\Sigma_j(M_{k,\ell})
=
\sum_{m=1}^{N}
\frac{V_m^{(k,\ell)}(j)}{j^m}
+
O(j^{-N-1}),
\qquad j\to\infty.
\end{equation}
The first two coefficient functions are
\[
V_1^{(k,\ell)}(j)\equiv C_{k,\ell},
\qquad
V_2^{(k,\ell)}(j)=\Psi_{k,\ell}(j),
\]
where \(\Psi_{k,\ell}\) is the periodic second-order coefficient
introduced in Section~\ref{sec4}.
\end{theorem}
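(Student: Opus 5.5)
The plan is to work one residue class modulo $L$ at a time and expand the separated formula \eqref{eq:F-separated} in powers of $j^{-1}$. Fix $r\in\{0,1,\dots,L-1\}$ and restrict to $j\equiv r\pmod L$. Then $u_j=\alpha j+c_r$ and $\eta_j=\beta j+d_r$ with constants $c_r,d_r\in[0,1)$, so $\tau_j=(\beta-\alpha)j+(d_r-c_r)$. By Remark~\ref{rem:why-separated}, $\tau_j\ge2$ for large $j$, so \eqref{eq:F-separated} applies. Every ingredient of that formula then becomes an explicit function of $j$ with $r$-dependent constant coefficients:
\begin{itemize}
\item the polynomial numerator pieces $\tau_j(u_j+\eta_j-1)$ and $2u_j\eta_j-u_j-\eta_j+1$ are quadratics in $j$;
\item the denominator $\tau_j(\tau_j^2-1)$ is a cubic in $j$ with leading coefficient $(\beta-\alpha)^3\neq0$;
\item the harmonic difference $H_{\eta_j-1}-H_{u_j-1}$ is handled by the complete Bernoulli expansion recalled just before the theorem.
\end{itemize}

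For the harmonic difference, write $\log\eta_j=\log(\beta j)+\log(1+d_r/(\beta j))$ and $\log u_j=\log(\alpha j)+\log(1+c_r/(\alpha j))$, and expand each logarithm as a convergent power series in $j^{-1}$. The terms $\log j$ and $\gamma$ cancel in the difference. Likewise, $1/(2\eta_j)$ and $B_{2s}/(2s\,\eta_j^{2s})$ expand as power series in $j^{-1}$, and similarly for $u_j$. So for every $N$,
\[
H_{\eta_j-1}-H_{u_j-1}=\log\frac{\beta}{\alpha}+\sum_{m=1}^{N}h_{m,r}\,j^{-m}+O(j^{-N-1}).
\]
Multiplying by the quadratic gives an expansion $n_2 j^2+n_1 j+n_0+\sum_{m\ge1}n_{-m}j^{-m}$ of the full numerator, truncated with $O(j^{-N})$ error. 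The reciprocal of the denominator is $(\beta-\alpha)^{-3}j^{-3}(1+\text{power series in }j^{-1})$, which is valid because $\tau_j/j$ is bounded away from zero. Taking the product yields
\[
\Sigma_j(M_{k,\ell})=\sum_{m=1}^{N}v_{m,r}\,j^{-m}+O(j^{-N-1})\qquad(j\equiv r\bmod L).
\]
The expansion starts at $j^{-1}$ because the numerator is $O(j^2)$. The constants in each $O$-term depend on $r$, but there are only $L$ residue classes, so taking the maximum gives uniform bounds.

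Next, set $V_m^{(k,\ell)}(j):=v_{m,\,j\bmod L}$. This function is $L$-periodic and real-valued, since all inputs are real, and \eqref{eq:periodic-Poincare} holds for every $N$. For uniqueness, suppose two such families exist and let $m_0$ be the first index where they differ, so $D:=V_{m_0}-V'_{m_0}\neq0$. Subtracting the two expansions with $N=m_0$ gives $D(j)=O(j^{-1})$. Since $D$ is periodic, restricting to a residue class where $D\neq0$ forces $D=0$ there, a contradiction.

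Finally, the identification of $V_1$ and $V_2$ follows from uniqueness. Theorem~\ref{thm:second-order-periodic} supplies an expansion with $N=2$, coefficients $C_{k,\ell}$ (constant) and $\Psi_{k,\ell}$, both $L$-periodic. The uniqueness argument applied only to the first two orders then forces $V_1\equiv C_{k,\ell}$ and $V_2=\Psi_{k,\ell}$. The main obstacle is bookkeeping rather than ideas: one must check that the Bernoulli series combined with the $\log(1+\cdot)$ expansions gives a genuine asymptotic power series with a uniform remainder at each truncation order, and that no $\log j$ term survives. The survival of $\log j$ is excluded because only the difference $\log\eta_j-\log u_j$ appears, and in it $\log j$ cancels exactly.
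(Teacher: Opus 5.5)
Your proposal is correct and follows essentially the same route as the paper: you work residue class by residue class modulo $L$, use the separated formula with the Bernoulli expansion of $H_{n-1}$ (where $\log j$ cancels), expand the reciprocal cubic denominator, take a maximum over the $L$ classes for a uniform remainder, prove uniqueness via the first differing index, and identify $V_1,V_2$ from Theorems~\ref{thm:tail-main} and~\ref{thm:second-order-periodic}. The only blemish is an off-by-one in your numerator error (an $O(j^{-N-1})$ harmonic error times a quadratic is $O(j^{-N+1})$, not $O(j^{-N})$), which is harmless because after dividing by the cubic denominator the remainder is still $O(j^{-N-2})$.
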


\begin{proof}
Fix a residue class $r\in\{0,1,\ldots,L\}$ and let $j\to\infty$ along the
residue class $j\equiv r\pmod L$. Then
$u_j=\alpha j+c_r,
\eta_j=\beta j+d_r$,
where $c_r$ and $d_r$ depend only on the fixed residue class $r$.

Since
$\beta-\alpha>0$,
we have
$\eta_j-u_j
=
(\beta-\alpha)j+(d_r-c_r)
\longrightarrow\infty$.
Hence the separated-parameter formula \eqref{eq:F-separated} applies for all sufficiently large $j$ in
this residue class.

Let \(N\geq1\) be fixed. Using the complete asymptotic expansion of
\(H_{n-1}\) displayed above, truncate it at a sufficiently high order. Choose $M$ sufficiently large so that
$2M+3\ge N+1$.
The remainder $O(n^{-2M-2})$ in the harmonic-number expansion,
after multiplication by the quadratic factor in the numerator of
the separated-parameter formula and division by its cubic
denominator, contributes $O(j^{-2M-3})=O(j^{-N-1})$. 

Substitution of
\[
u_j=\alpha j+c_r,
\qquad
\eta_j=\beta j+d_r
\]
gives an expansion of
$H_{\eta_j-1}-H_{u_j-1}$
in integral powers of $j^{-1}$ to any prescribed finite order.

Indeed,
\[
\log(\alpha j+c_r)
=
\log j+\log\alpha
+
\log\left(
1+\frac{c_r}{\alpha j}
\right)
\]
and
\[
\log(\beta j+d_r)
=
\log j+\log\beta
+
\log\left(
1+\frac{d_r}{\beta j}
\right).
\]
The two $\log j$ terms cancel in their difference, while each of the
remaining logarithms has an ordinary Taylor expansion in powers of
$j^{-1}$.  The inverse powers of $u_j$ and $\eta_j$ occurring in the
Euler--Maclaurin expansion likewise admit ordinary expansions in
$j^{-1}$.

The other factors in the separated formula are polynomial expressions
in $u_j$ and $\eta_j$.  Its denominator is
$(\eta_j-u_j)
\bigl(
(\eta_j-u_j)^2-1
\bigr)$,
which is a cubic polynomial in $j$ with leading coefficient
$(\beta-\alpha)^3>0$.
Its reciprocal therefore has a Poincar\'e expansion in powers of
$j^{-1}$ to arbitrary finite order. All the finite Taylor expansions involved above may be truncated
at sufficiently high order so that their combined remainder is
$O(j^{-N-1})$.

Combining these expansions in the separated formula gives, along the
fixed residue class,
\[
\Sigma_j(M_{k,\ell})
=
\sum_{m=1}^{N}
\frac{A_{m,r}}{j^m}
+
O(j^{-N-1}),
\]
where the constants $A_{m,r}$ depend only on the residue class $r$.

The coefficients obtained in this way are consistent as the truncation
order \(N\) varies. Indeed, increasing the truncation order cannot alter
any coefficient already determined at a lower order, by the uniqueness
of asymptotic expansions in powers of \(j^{-1}\) along the fixed residue
class.

There are only finitely many residue classes modulo \(L\). Define
\[
V_m^{(k,\ell)}(j)=A_{m,r}
\]
whenever
$j\equiv r\pmod L$.
Then every \(V_m^{(k,\ell)}\) is \(L\)-periodic.  Since the number of
residue classes is finite, the remainder constants may be chosen
uniformly by taking their maximum.  This proves the asserted
expansion.

The identification of the first coefficient follows from
Theorem~\ref{thm:tail-main}, which gives
\[
\Sigma_j(M_{k,\ell})
=
\frac{C_{k,\ell}}j
+
O(j^{-2}).
\]
Thus
$V_1^{(k,\ell)}
=
C_{k,\ell}$.
Similarly, \eqref{eq:Sigma-two-term} gives
\[
\Sigma_j(M_{k,\ell})
=
\frac{C_{k,\ell}}j
+
\frac{\Psi_{k,\ell}(j)}{j^2}
+
O(j^{-3}),
\]
and hence
$V_2^{(k,\ell)}(j)=\Psi_{k,\ell}(j)$.

It remains to prove uniqueness. Fix a residue class
$r\in\{1,\ldots,L\}$. Suppose that two collections of coefficients,
$\{A_{m,r}\}_{m\geq1}$ and
$\{\widetilde A_{m,r}\}_{m\geq1}$, give asymptotic expansions
of the stated form along $j\equiv r\pmod L$.
If the two collections are not identical, let $m_0$ be the smallest
index such that
$A_{m_0,r}\neq \widetilde A_{m_0,r}$.
Subtracting the two expansions through order $m_0$ gives
\[
\frac{A_{m_0,r}-\widetilde A_{m_0,r}}{j^{m_0}}
=
O(j^{-m_0-1})
\]
as $j\to\infty$ along $j\equiv r\pmod L$.
Multiplying by $j^{m_0}$ and letting $j\to\infty$ along this
residue class yields
$A_{m_0,r}-\widetilde A_{m_0,r}=0$,
contradicting the choice of $m_0$. Hence the coefficients are unique
on each residue class. Therefore all the periodic coefficient
functions $V_m^{(k,\ell)}$ are uniquely determined.
\end{proof}

\subsection{The associated polylogarithmic hierarchy}

For every $m\geq1$, the periodic coefficient
$V_m^{(k,\ell)}$ has a finite Fourier expansion.  Retain
$\omega=e^{2\pi i/L}$
and define
\begin{equation}\label{eq:higher-Fourier-expansion}
\widehat V_{m,q}
=
\frac1L
\sum_{r=1}^{L}
V_m^{(k,\ell)}(r)\omega^{-qr},
\qquad
0\leq q\leq L-1,
\end{equation}
so that
\[
V_m^{(k,\ell)}(j)
=
\sum_{q=0}^{L-1}
\widehat V_{m,q}\omega^{qj}.
\]
In particular, since
$V_2^{(k,\ell)}=\Psi_{k,\ell}$,
the notation introduced in Section~\ref{sec5.2} gives
\[
\widehat V_{2,q}=\widehat{\Psi}_q,
\qquad
0\leq q\leq L-1.
\]

For $m\geq1$, let
\[
\operatorname{Li}_m(z)
=
\sum_{j=1}^{\infty}\frac{z^j}{j^m},
\qquad
|z|<1.
\]
For $m=1$,
\[
\operatorname{Li}_1(z)
=
-\operatorname{Log}(1-z)
=
\operatorname{Log}\frac{1}{1-z}.
\]

\begin{theorem}
\label{thm:polylogarithmic-hierarchy}
For every fixed integer $N\geq1$ and every $|t|<1$,
\begin{equation}\label{eq:polylog-hierarchy}
\mathcal P_{k,\ell}(t)
=
\Sigma_0(M_{k,\ell})
+
\sum_{m=1}^{N}
\sum_{q=0}^{L-1}
\widehat V_{m,q}
\operatorname{Li}_m(\omega^q t)
+
\mathcal E_N^{(k,\ell)}(t),
\end{equation}
where
$\mathcal E_N^{(k,\ell)}(t)
=
\sum_{j=1}^{\infty}
e_{N,j}^{(k,\ell)}t^j$
and
$e_{N,j}^{(k,\ell)}
=
O(j^{-N-1})$.

Moreover,
\[
\mathcal E_N^{(k,\ell)}
\in
C^{N-1}(\overline{\mathbb D}).
\]
\end{theorem}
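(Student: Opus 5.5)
The argument mirrors the proof of Lemma~\ref{lem:dilogarithmic-decomposition}, with Theorem~\ref{thm:periodic-Poincare-expansion} replacing the two-term expansion. Fix $N\geq1$. For $j\geq1$ we define the remainder
\[
e_{N,j}^{(k,\ell)}
=
\Sigma_j(M_{k,\ell})-\sum_{m=1}^{N}\frac{V_m^{(k,\ell)}(j)}{j^m}.
\]
Theorem~\ref{thm:periodic-Poincare-expansion} gives $e_{N,j}^{(k,\ell)}=O(j^{-N-1})$ directly.

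\textbf{Step 1: the series identity.} For $|t|<1$ we split
\[
\mathcal P_{k,\ell}(t)=\Sigma_0(M_{k,\ell})+\sum_{m=1}^{N}\sum_{j\geq1}\frac{V_m^{(k,\ell)}(j)}{j^m}t^j+\sum_{j\ge1}e_{N,j}^{(k,\ell)}t^j .
\]
The splitting is legitimate because each of the finitely many series converges absolutely for $|t|<1$: the coefficients $V_m^{(k,\ell)}$ are periodic, hence bounded. Into each inner series we substitute the finite Fourier expansion \eqref{eq:higher-Fourier-expansion}, $V_m^{(k,\ell)}(j)=\sum_{q}\widehat V_{m,q}\omega^{qj}$. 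Since the sum over $q$ is finite, we may interchange it with the sum over $j$, and
\[
\sum_{j\ge1}\frac{(\omega^q t)^j}{j^m}=\operatorname{Li}_m(\omega^qt),
\]
which is valid since $|\omega^q t|<1$. For $m=1$ this is the principal $\operatorname{Log}$ series. This yields \eqref{eq:polylog-hierarchy}. As a consistency check, the $m=1$ term reduces to $C_{k,\ell}\operatorname{Log}\frac1{1-t}$ because $V_1^{(k,\ell)}$ is constant, so only $q=0$ survives.

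\textbf{Step 2: boundary regularity of the remainder.} We must show $\mathcal E_N^{(k,\ell)}\in C^{N-1}(\overline{\mathbb D})$. For $0\le p\le N-1$, the $p$-th derivative of $\mathcal E_N^{(k,\ell)}$ is the formal series $\sum_j j(j-1)\cdots(j-p+1)e_{N,j}^{(k,\ell)}t^{j-p}$. Its coefficients are $O(j^{p-N-1})$ and $p-N-1\le -2$, so they are summable. The Weierstrass M-test then shows that the series for $\mathcal E_N^{(k,\ell)}$ and all its derivatives up to order $N-1$ converge uniformly on $\overline{\mathbb D}$. To conclude, we use the standard fact that a uniform limit of polynomials whose derivatives up to order $N-1$ also converge uniformly on the closed disk defines a function that is $C^{N-1}$ on $\overline{\mathbb D}$, in the sense that all complex derivatives up to order $N-1$ extend continuously to the boundary. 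This is the same reasoning used in Lemma~\ref{lem:dilogarithmic-decomposition}, applied iteratively.

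\textbf{Expected obstacle.} Nothing here is deep; the only point requiring care is the regularity claim. One has to say precisely in what sense $C^{N-1}(\overline{\mathbb D})$ is meant, namely continuous extension of $\partial_t^p\mathcal E_N^{(k,\ell)}$ for $p\le N-1$, so that the M-test argument applies. One should also note that the exponent bookkeeping is sharp: the bound $O(j^{-N-1})$ gives exactly $N-1$ derivatives with absolutely summable coefficients, and in general no more.
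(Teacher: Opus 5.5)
Your proposal is correct and follows essentially the same route as the paper: you define $e_{N,j}^{(k,\ell)}$ via the periodic Poincar\'e expansion and convert each level into the modes $\operatorname{Li}_m(\omega^q t)$ by the finite Fourier expansion. You then obtain $\mathcal E_N^{(k,\ell)}\in C^{N-1}(\overline{\mathbb D})$ from the Weierstrass $M$-test on derivatives of order $p\le N-1$, whose coefficients are $O(j^{p-N-1})$ with $p-N-1\le -2$; your remarks on the meaning of $C^{N-1}(\overline{\mathbb D})$ and the $m=1$ consistency check are harmless additions.
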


\begin{proof}
By the periodic Poincar\'e expansion
\eqref{eq:periodic-Poincare}, we may write
\[
\Sigma_j(M_{k,\ell})
=
\sum_{m=1}^{N}
\frac{V_m^{(k,\ell)}(j)}{j^m}
+
e_{N,j}^{(k,\ell)},
\]
where
$e_{N,j}^{(k,\ell)}=O(j^{-N-1})$.

Hence, for $|t|<1$,
\[
\begin{aligned}
\mathcal P_{k,\ell}(t)
=
\Sigma_0(M_{k,\ell})
+
\sum_{m=1}^{N}
\sum_{j=1}^{\infty}
\frac{V_m^{(k,\ell)}(j)t^j}{j^m}
+
\sum_{j=1}^{\infty}
e_{N,j}^{(k,\ell)}t^j.
\end{aligned}
\]
Using the finite Fourier expansion
\eqref{eq:higher-Fourier-expansion},
\[
\begin{aligned}
\sum_{j=1}^{\infty}
\frac{V_m^{(k,\ell)}(j)t^j}{j^m}
=
\sum_{q=0}^{L-1}
\widehat{V}_{m,q}
\sum_{j=1}^{\infty}
\frac{(\omega^qt)^j}{j^m}
=
\sum_{q=0}^{L-1}
\widehat{V}_{m,q}
\operatorname{Li}_m(\omega^qt).
\end{aligned}
\]
This proves the stated decomposition.

It remains to establish the boundary regularity of the remainder.
Let
$0\leq r\leq N-1$.
After differentiating the remainder $r$ times, its coefficients are
bounded by a constant multiple of
$j^r
\left|
e_{N,j}^{(k,\ell)}
\right|$.

Since
$j^r
e_{N,j}^{(k,\ell)}
=
O(j^{r-N-1})$
and
$r-N-1\leq-2$,
the resulting coefficient series is absolutely summable.  The
Weierstrass $M$-test therefore yields uniform convergence on
$\overline{\mathbb D}$ for the remainder and each of its derivatives
of orders at most $N-1$.  Hence
$\mathcal E_N^{(k,\ell)}
\in
C^{N-1}(\overline{\mathbb D})$.
\end{proof}

The conjugate symmetry of the Fourier coefficients ensures that
\eqref{eq:polylog-hierarchy} is real-valued for \(0<t<1\).

Thus each periodic coefficient at order $j^{-m}$ gives rise to a
finite family of root-of-unity polylogarithmic modes, with higher-order
truncation yielding correspondingly stronger boundary regularity.

For every nontrivial \(L\)-th root of unity
\(\zeta=\omega^{-q}\neq1\), the preceding decomposition also shows
that \(P_{k,\ell}(r\zeta)\) has a finite radial limit as
\(r\to1^-\).  Nevertheless, the individual Fourier modes remain
detectable through logarithmic growth of suitable derivatives.
This leads naturally to the boundary reconstruction problem studied
in the next section.


\section{Boundary rigidity and reconstruction of the asymptotic hierarchy}
\label{sec6} 

We now prove the converse: boundary data of Yang's generating function
recover the periodic coefficients in~\eqref{eq:periodic-Poincare}, beginning with parameter
recovery from the second-order boundary data. Retain $L=\operatorname{lcm}(k,\ell)$ and
$\omega=e^{2\pi i/L}$.

\subsection{Fourier structure of the ceiling errors}

For $m\mid L$, regard the ceiling error
\[
\varepsilon_m(j)
=
\left\lceil\frac jm\right\rceil-\frac jm
\]
as an $L$-periodic function.  Define its Fourier coefficients by
\[
\widehat{\varepsilon}_{m,q}
=
\frac1L
\sum_{j=1}^{L}
\varepsilon_m(j)\omega^{-qj},
\qquad
0\leq q\leq L-1.
\]

\begin{lemma}
\label{lem:epsilon-Fourier}
Let \(m\mid L\), and set
\(\xi_s=e^{-2\pi i s/m}\) for \(1\leq s\leq m-1\).
Then the Fourier coefficients of \(\varepsilon_m\) satisfy
\begin{equation}\label{eq:epsilon-Fourier-explicit}
\widehat{\varepsilon}_{m,q}
=
\begin{cases}
\dfrac{m-1}{2m},
& q=0,
\\[2mm]
\dfrac{\xi_s}{m(1-\xi_s)},
& q=s\dfrac{L}{m},
\quad 1\leq s\leq m-1,
\\[2mm]
0,
& \text{otherwise}.
\end{cases}
\end{equation}
In particular, every nonzero Fourier mode allowed by the period
\(m\) is nonvanishing.
\end{lemma}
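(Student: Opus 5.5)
The plan is to reduce the $L$-periodic Fourier sum to a sum over one period of length $m$, and then evaluate that short sum using the explicit residue formula for $\varepsilon_m$ from Section~\ref{sec2}. Since $m\mid L$, I write each $j\in\{1,\dots,L\}$ uniquely as $j=r+tm$ with $1\leq r\leq m$ and $0\leq t\leq L/m-1$. By $m$-periodicity, $\varepsilon_m(j)=\varepsilon_m(r)$, so
\[
\widehat{\varepsilon}_{m,q}
=\frac1L\sum_{r=1}^{m}\varepsilon_m(r)\omega^{-qr}
\sum_{t=0}^{L/m-1}\omega^{-qtm}.
\]
The inner sum is a geometric sum in the $(L/m)$-th root of unity $\omega^{-qm}=e^{-2\pi i q/(L/m)}$. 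It equals $L/m$ when $(L/m)\mid q$ and vanishes otherwise. This gives the ``otherwise $0$'' case.

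Next, for $q=sL/m$ with $0\leq s\leq m-1$, I note that $\omega^{-qr}=e^{-2\pi i sr/m}=\xi_s^r$ (with $\xi_0=1$). Hence
\[
\widehat{\varepsilon}_{m,q}=\frac1m\sum_{r=1}^{m}\varepsilon_m(r)\xi_s^r .
\]
Here I insert $\varepsilon_m(r)=1-r/m$ for $1\leq r\leq m-1$ and $\varepsilon_m(m)=0$. The formula $1-r/m$ also gives $0$ at $r=m$, so I may use it for the whole range $1\le r\le m$. For $s=0$ this is the mean $\frac1m\sum_{r=1}^{m-1}(1-r/m)=\frac{m-1}{2m}$, which was already used in the proof of Lemma~\ref{lem:mean-Psi}.

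For $1\leq s\leq m-1$, $\xi_s$ is a nontrivial $m$-th root of unity. Therefore $\sum_{r=1}^m\xi_s^r=0$, and the differentiated geometric sum gives
\[
\sum_{r=1}^{m} r\xi^r=\frac{m\xi}{\xi-1}, \qquad \xi^m=1,\ \xi\neq1 .
\]
I would verify this by multiplying the sum by $(1-\xi)$ and telescoping: the result is $\sum_{r=1}^m\xi^r-m\xi^{m+1}=-m\xi$. Substituting,
\[
\widehat{\varepsilon}_{m,q}=\frac1m\Bigl(0-\frac1m\cdot\frac{m\xi_s}{\xi_s-1}\Bigr)=\frac{\xi_s}{m(1-\xi_s)} .
\]
Since $\xi_s\neq0$ and $\xi_s\neq1$, this is finite and nonzero, which proves the final nonvanishing assertion.

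The only delicate points are bookkeeping rather than genuine obstacles. One must correctly handle the residue $r=m$, i.e.\ $j\equiv0$, where $\varepsilon_m=0$. One must also keep the sign conventions of $\omega^{-qj}$ and $\xi_s$ consistent so that the result is $\xi_s/(1-\xi_s)$ and not its conjugate. I expect the telescoping identity for $\sum r\xi^r$ to be the step where an error is most likely, so I would double-check it on $m=2$, where the formula predicts $\widehat\varepsilon=-1/4$. Directly, $\varepsilon_2$ takes the values $\tfrac12,0$, and the alternating mode is $\frac12\bigl(\tfrac12\cdot(-1)+0\bigr)=-\tfrac14$, which agrees.
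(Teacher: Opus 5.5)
Your proof is correct and follows the paper's argument essentially step for step. Both use the same reduction $j=r+hm$ to a geometric sum in $\omega^{-qm}$, then evaluate $\frac1m\sum_r\varepsilon_m(r)\xi_s^r$ via the standard sums $\sum\xi_s^r$ and $\sum r\xi_s^r$. The only difference is cosmetic: you extend $1-r/m$ to $r=m$ and sum over a full period, so that $\sum_{r=1}^{m}\xi_s^r=0$, whereas the paper sums over $1\le r\le m-1$ using $\sum_{r=1}^{m-1}\xi_s^r=-1$.
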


\begin{proof}
Write
$L=Mm$.
Since \(\varepsilon_m\) is \(m\)-periodic, every
\(j\in\{1,\ldots,L\}\) can be written uniquely as
\[
j=r+hm,
\qquad
1\leq r\leq m,
\quad
0\leq h\leq M-1.
\]
Hence
\[
\begin{aligned}
\widehat{\varepsilon}_{m,q}
=
\frac1L
\sum_{r=1}^{m}
\sum_{h=0}^{M-1}
\varepsilon_m(r+hm)\,
\omega^{-q(r+hm)}
=
\frac1L
\sum_{r=1}^{m}
\varepsilon_m(r)\omega^{-qr}
\sum_{h=0}^{M-1}
\omega^{-qhm}.
\end{aligned}
\]

Since
$(\omega^{-qm})^M
=
\omega^{-qL}
=
1$,
we have
\[
\sum_{h=0}^{M-1}\omega^{-qhm}
=
\begin{cases}
M, & \omega^{-qm}=1,\\[1mm]
0, & \omega^{-qm}\neq1.
\end{cases}
\]
Thus \(\widehat{\varepsilon}_{m,q}=0\) unless $\omega^{-qm}=1$, which is equivalent to
\[
q=s\frac{L}{m},
\qquad
s=0,\ldots,m-1.
\]

For $q=0$,
\[
\begin{aligned}
\widehat{\varepsilon}_{m,0}
=
\frac1m
\sum_{r=1}^{m}
\varepsilon_m(r)
=
\frac1{m^2}
\sum_{r=1}^{m-1}(m-r)
=
\frac{m-1}{2m}.
\end{aligned}
\]

Now let
\[
q=s\frac Lm,
\qquad
1\leq s\leq m-1,
\]
and put
$\xi_s=e^{-2\pi i s/m}$.
Then
\[
\begin{aligned}
\widehat{\varepsilon}_{m,q}
=
\frac1m
\sum_{r=1}^{m}
\varepsilon_m(r)\xi_s^r
=
\frac1{m^2}
\sum_{r=1}^{m-1}
(m-r)\xi_s^r.
\end{aligned}
\]

Since $\xi_s^m=1$ and $\xi_s\neq1$,
$\sum_{r=1}^{m-1}\xi_s^r=-1$
and
\[
\sum_{r=1}^{m-1}r\xi_s^r
=
-\frac{m}{1-\xi_s}.
\]
Therefore
\[
\begin{aligned}
\sum_{r=1}^{m-1}
(m-r)\xi_s^r
=
-m+\frac{m}{1-\xi_s}
=
\frac{m\xi_s}{1-\xi_s}.
\end{aligned}
\]
Substitution gives
\[
\widehat{\varepsilon}_{m,q}
=
\frac{\xi_s}{m(1-\xi_s)},
\]
which is nonzero.
\end{proof}


\subsection{Second-order boundary support and parameter recovery}

We now apply Lemma~\ref{lem:epsilon-Fourier} to the second-order
coefficient. Recall from Sections~\ref{sec4} and~\ref{sec5} that
\[
V_2^{(k,\ell)}(j)
=
\Psi_{k,\ell}(j)
=
\mathscr Q(\alpha,\beta)
+
\varepsilon_b(j)\mathscr C_1(\alpha,\beta)
+
\varepsilon_a(j)\mathscr C_2(\alpha,\beta),
\]
where $\alpha=\frac1b,
\beta=\frac1a$,
and
$\mathscr C_1(\alpha,\beta)<0,
\mathscr C_2(\alpha,\beta)<0$.

By \eqref{eq:higher-Fourier-expansion},
\[\widehat V_{2,q}
=
\frac1L
\sum_{j=1}^{L}
V_2^{(k,\ell)}(j)\omega^{-qj}.\]
Thus
$\widehat V_{2,q}=\widehat{\Psi}_q$.

For $q\neq0$,
\[
\widehat{V}_{2,q}
=
\mathscr C_1(\alpha,\beta)
\widehat{\varepsilon}_{b,q}
+
\mathscr C_2(\alpha,\beta)
\widehat{\varepsilon}_{a,q}.
\]

We remove the leading logarithmic level from Yang's generating
function by setting
\[
\mathcal R_2(t)
=
\mathcal P_{k,\ell}(t)
-
\Sigma_0(M_{k,\ell})
-
C_{k,\ell}\operatorname{Li}_1(t),
\]
where
\[
\operatorname{Li}_1(t)
=
\operatorname{Log}\frac1{1-t},
\qquad
|t|<1.
\]

Taking \(N=2\) in
\eqref{eq:polylog-hierarchy}, we obtain
\[
\mathcal R_2(t)
=
\sum_{q=0}^{L-1}
\widehat{V}_{2,q}
\operatorname{Li}_2(\omega^qt)
+
\mathcal E_2^{(k,\ell)}(t),
\]
where
$\mathcal E_2^{(k,\ell)}
\in
C^1(\overline{\mathbb D})$.

\begin{proposition}
\label{prop:second-boundary-spectrum}
For every \(\zeta\in\partial\mathbb D\), along the radial approach
\[
t=r\zeta,\qquad 0<r<1,\qquad r\to1^-,
\]
the normalized boundary limit
\[
\Gamma_2(\zeta)
:=
\lim_{r\to1^-}
\frac{\mathcal R_2'(r\zeta)}
{\log\frac1{1-r}}
\]
exists. Moreover,
\begin{equation}\label{eq:B2-Fourier-identification}
\Gamma_2(\zeta)
=
\begin{cases}
\omega^q\widehat{V}_{2,q},
& \zeta=\omega^{-q},
\quad 0\le q\le L-1,\\[6pt]
0,
& \zeta^L\ne1.
\end{cases}
\end{equation}
\end{proposition}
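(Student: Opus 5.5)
Differentiate the decomposition of $\mathcal R_2$ obtained from Theorem~\ref{thm:polylogarithmic-hierarchy} with $N=2$ term by term:
\[
\mathcal R_2'(t)
=
\sum_{q=0}^{L-1}
\widehat V_{2,q}\,\omega^q\,\operatorname{Li}_2'(\omega^q t)
+
\bigl(\mathcal E_2^{(k,\ell)}\bigr)'(t),
\qquad |t|<1.
\]
Then use the identity $\operatorname{Li}_2'(z)=z^{-1}\operatorname{Log}\frac{1}{1-z}$ for $0<|z|<1$; at $z=0$ the derivative equals $1$, so the singularity is removable. This gives
\[
\omega^q\operatorname{Li}_2'(\omega^q t)=\frac1t\operatorname{Log}\frac{1}{1-\omega^q t}.
\]
So the whole problem reduces to the radial behavior of the finitely many functions $\operatorname{Log}\frac{1}{1-\omega^q r\zeta}$, divided by $\log\frac1{1-r}$. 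The remainder term causes no difficulty: since $\mathcal E_2^{(k,\ell)}\in C^1(\overline{\mathbb D})$, its derivative is bounded on the closed disk, and its quotient by $\log\frac1{1-r}$ tends to $0$. The prefactor $1/t=1/(r\zeta)$ has modulus $1/r\to1$; we keep track of it explicitly.

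Next, fix $\zeta\in\partial\mathbb D$ and a mode $q$, and let $w=\omega^q\zeta\in\partial\mathbb D$. There are two cases.

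\emph{Case $w\neq1$.} Then $|1-rw|\ge \tfrac12|1-w|>0$ for $r$ close to $1$. Also $1-rw$ stays in the right half-plane, so $\operatorname{Log}\frac1{1-rw}$ stays bounded as $r\to1^-$, and its contribution divided by $\log\frac1{1-r}$ tends to $0$.

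\emph{Case $w=1$.} This means $\zeta=\omega^{-q}$, and then the term is exactly $\log\frac1{1-r}$, which is real. This step gives the unique surviving contribution, namely
\[
\frac{\widehat V_{2,q}}{r\zeta}.
\]
Since $\omega$ is a primitive $L$-th root of unity, for a given $\zeta$ at most one $q\in\{0,\dots,L-1\}$ satisfies $\omega^q\zeta=1$, and such a $q$ exists if and only if $\zeta^L=1$.

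Collecting these facts gives the following. If $\zeta^L\neq1$, every term is $o(\log\frac1{1-r})$, so $\Gamma_2(\zeta)=0$. If $\zeta=\omega^{-q}$, the limit is
\[
\frac{\widehat V_{2,q}}{\zeta}=\omega^q\widehat V_{2,q},
\]
which is \eqref{eq:B2-Fourier-identification}. The step requiring the most care is the bookkeeping of the factor $\omega^q$ from the chain rule against the factor $1/t$ in $\operatorname{Li}_2'$. These two factors cancel into $1/(r\zeta)$, and this must match the stated normalization $\omega^q\widehat V_{2,q}$.

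One also has to justify the boundedness in the case $w\neq1$. Here $\operatorname{Log}$ is the principal branch, $1-rw$ never leaves the half-plane $\operatorname{Re}>0$, and its modulus is bounded below. Hence both the real and imaginary parts of the logarithm stay bounded.
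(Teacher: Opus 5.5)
Your proposal is correct and follows the paper's proof essentially step by step. You differentiate the $N=2$ polylogarithmic decomposition, use $\operatorname{Li}_2'(z)=\operatorname{Li}_1(z)/z$ so that the single matching mode contributes $\frac{1}{r\zeta}\log\frac{1}{1-r}=\frac{\omega^q}{r}\log\frac1{1-r}$, and bound the non-matching modes and the $C^1(\overline{\mathbb D})$ remainder. Your explicit lower bound on $|1-rw|$ and the half-plane argument simply make precise the boundedness that the paper asserts without detail.
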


\begin{proof}
For \(|z|<1\),
\[
\frac{d}{dz}\operatorname{Li}_2(z)
=
\frac{\operatorname{Li}_1(z)}{z}.
\]
Hence
\[
\frac{d}{dt}\operatorname{Li}_2(\omega^p t)
=
\frac{\operatorname{Li}_1(\omega^p t)}{t}.
\]

First let
$\zeta=\omega^{-q},
 0\leq q\leq L-1$,
and approach \(\zeta\) radially by
$t=r\omega^{-q}, r\to1^-$.

For the Fourier mode \(p=q\),
$\omega^q t=r$,
and therefore
\[
\begin{aligned}
\frac{d}{dt}\operatorname{Li}_2(\omega^q t)
\bigg|_{t=r\omega^{-q}}
=
\frac{\operatorname{Li}_1(r)}
{r\omega^{-q}}
=
\frac{\omega^q}{r}
\log\frac1{1-r}.
\end{aligned}
\]

If \(p\neq q\), then
\[
\omega^p t
=
r\omega^{p-q}
\longrightarrow
\omega^{p-q}\neq1,
\]
so
\[
\frac{d}{dt}\operatorname{Li}_2(\omega^p t)
\bigg|_{t=r\omega^{-q}}
=
O(1)
\qquad (r\to1^-).
\]

Since
$\mathcal E_2^{(k,\ell)}
\in C^1(\overline{\mathbb D})$,
its derivative is bounded on \(\overline{\mathbb D}\).  Hence
\[
\mathcal R_2'(r\omega^{-q})
=
\frac{\omega^q}{r}\widehat V_{2,q}
\log\frac1{1-r}
+
O(1).
\]
Dividing by \(\log\frac1{1-r}\) and letting \(r\to1^-\), we obtain
$\Gamma_2(\omega^{-q})
=
\omega^q\widehat V_{2,q}$.

Now suppose that \(\zeta^L\neq1\), and approach \(\zeta\) radially
along
$t=r\zeta,
 r\to1^-$.
Then
$\omega^p\zeta\neq1$
for every $0\leq p\leq L-1$.

Thus every dilogarithmic derivative in the expansion of
\(\mathcal R_2'(r\zeta)\) remains bounded as \(r\to1^-\).
The derivative of the remainder is also bounded. Therefore
$\mathcal R_2'(r\zeta)=O(1)$,
and consequently
$\Gamma_2(\zeta)=0$.
This proves \eqref{eq:B2-Fourier-identification}.
\end{proof}

Define the second-order boundary support by
\[
\mathfrak B_2(k,\ell)
=
\left\{
\zeta\in\partial\mathbb D:
\Gamma_2(\zeta)\neq0
\right\}.
\]

\begin{theorem}
\label{thm:second-boundary-spectrum}
Every point of $\mathfrak B_2(k,\ell)$ is a root of unity whose order
divides either $a$ or $b$.  Moreover,
$e^{-2\pi i/b}
\in
\mathfrak B_2(k,\ell)$.
If $a\nmid b$, then also
$e^{-2\pi i/a}
\in
\mathfrak B_2(k,\ell)$. Consequently,
\begin{equation}\label{eq:b-L-boundary-recovery}
\begin{aligned}
b
&=
\max_{\zeta\in\mathcal B_2(k,\ell)}
\operatorname{ord}(\zeta),\\
L
&=
\operatorname{lcm}
\left\{
\operatorname{ord}(\zeta):
\zeta\in\mathcal B_2(k,\ell)
\right\}.
\end{aligned}
\end{equation}
\end{theorem}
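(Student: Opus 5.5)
By Proposition~\ref{prop:second-boundary-spectrum}, $\Gamma_2(\zeta)\ne0$ can only happen at $\zeta=\omega^{-q}$, and there $\Gamma_2(\omega^{-q})=\omega^q\widehat V_{2,q}$. The plan is therefore to compute which Fourier coefficients $\widehat V_{2,q}$ are nonzero, splitting into the cases $q=0$ and $q\neq0$.

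\textbf{The mode $q=0$.} Here $\zeta=1$ has order $1$, which divides both $a$ and $b$. Lemma~\ref{lem:mean-Psi} gives $\widehat V_{2,0}=C_{k,\ell}/2>0$, so $1\in\mathfrak B_2(k,\ell)$. This point is harmless for both the order claim and the formulas for $b$ and $L$.

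\textbf{The modes $q\ne0$.} We use
\[
\widehat V_{2,q}=\mathscr C_1\,\widehat\varepsilon_{b,q}+\mathscr C_2\,\widehat\varepsilon_{a,q},
\]
with $\mathscr C_1,\mathscr C_2<0$. By Lemma~\ref{lem:epsilon-Fourier}, $\widehat\varepsilon_{m,q}\neq0$ forces $q=sL/m$ for some $1\le s\le m-1$. In that case $\omega^{-q}=e^{-2\pi i s/m}$, whose order divides $m$. Hence every support point has order dividing $a$ or $b$.

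\emph{The point $e^{-2\pi i/b}$.} Take $q=L/b$, that is $s=1$ for $m=b$. Then $\widehat\varepsilon_{b,q}\ne0$, and we must exclude cancellation with the $a$-term.
\begin{itemize}
\item The $a$-term is nonzero only if $q$ is a multiple of $L/a$, i.e.\ only if $e^{-2\pi i/b}$ is an $a$-th root of unity, i.e.\ only if $b\mid a$. This is impossible since $a<b$.
\item So $\widehat\varepsilon_{a,q}=0$ and $\widehat V_{2,q}=\mathscr C_1\,\xi/(b(1-\xi))\ne0$ with $\xi=e^{-2\pi i/b}$.
\end{itemize}
Thus $e^{-2\pi i/b}\in\mathfrak B_2(k,\ell)$.

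\emph{The point $e^{-2\pi i/a}$ when $a\nmid b$.} The symmetric argument applies. Here $e^{-2\pi i/a}$ is a $b$-th root of unity iff $a\mid b$, which is excluded, so the $b$-term vanishes and the $a$-term alone gives a nonzero coefficient.

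\textbf{The main obstacle.} The one delicate point is cancellation at a frequency shared by both periods, i.e.\ an order $d$ dividing $\gcd(a,b)$. The statement only requires the two specific points above, so we avoid analysing shared frequencies altogether. We only use that for $\zeta=e^{-2\pi i/b}$ and $\zeta=e^{-2\pi i/a}$ (with $a\nmid b$), the order exceeds anything shared, so no cancellation can occur.

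\textbf{The recovery formulas.} Orders of support points divide $a$ or $b$, hence are at most $b$, and $b$ is attained; this gives the formula for $b$. For $L$, the lcm of the orders divides $\operatorname{lcm}(a,b)=L$. Conversely, it is divisible by $b$, and also by $a$ when $a\nmid b$. When $a\mid b$ we have $L=b$. In either case the lcm equals $L$.
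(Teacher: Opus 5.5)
Your proposal is correct and follows essentially the same route as the paper: you identify $\mathfrak B_2(k,\ell)$ with the nonzero modes $\widehat V_{2,q}$ via Proposition~\ref{prop:second-boundary-spectrum}, restrict the orders using Lemma~\ref{lem:epsilon-Fourier}, rule out cancellation at $q=L/b$ (and at $q=L/a$ when $a\nmid b$) because the other ceiling error has no mode there, and then read off $b$ and $L$. The differences are cosmetic. You phrase non-cancellation through orders of roots of unity rather than the relation $L/b=sL/a\Rightarrow a=sb$, and you get $L$ by two-sided divisibility rather than a case split. It would be worth saying explicitly, as the paper does, that $a\nmid b$ forces $a\ge2$, so that $s=1$ is an admissible index in Lemma~\ref{lem:epsilon-Fourier}.
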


\begin{proof}
By \eqref{eq:B2-Fourier-identification}, the points of
\(\mathfrak B_2(k,\ell)\) are precisely those \(L\)-th roots of unity
\(\omega^{-q}\), \(0\le q\le L-1\), for which
\(\widehat V_{2,q}\neq0\).

We first determine the possible orders of these points.
For \(q=0\), Lemma~\ref{lem:mean-Psi} gives
$\widehat V_{2,0}=\frac{C_{k,\ell}}{2}>0$.
Hence \(1\in\mathfrak B_2(k,\ell)\), and
\(\operatorname{ord}(1)=1\), which divides both \(a\) and \(b\).

Now let \(q\neq0\). Then
\[
\widehat V_{2,q}
=
\mathscr C_1(\alpha,\beta)\widehat\varepsilon_{b,q}
+
\mathscr C_2(\alpha,\beta)\widehat\varepsilon_{a,q}.
\]

If \(\widehat V_{2,q}\neq0\), then at least one of
\(\widehat\varepsilon_{b,q}\) and
\(\widehat\varepsilon_{a,q}\) is nonzero.
By \eqref{eq:epsilon-Fourier-explicit}, this implies that either
\[
q=s\frac{L}{b}
\qquad (1\le s\le b-1),
\]
or
\[
q=t\frac{L}{a}
\qquad (1\le t\le a-1).
\]
In the first case,
$\omega^{-q}=e^{-2\pi i s/b}$,
whose order divides \(b\); in the second case,
$\omega^{-q}=e^{-2\pi i t/a}$,
whose order divides \(a\).
Thus every point of \(\mathfrak B_2(k,\ell)\) has order dividing
either \(a\) or \(b\).

We next show that an element of order exactly \(b\) always occurs.
Set
$q_b=\frac{L}{b}$.
The coefficient
\(\widehat{\varepsilon}_{b,q_b}\)
is nonzero by
\eqref{eq:epsilon-Fourier-explicit}.  On the other hand,
$q_b$ is not a multiple of $L/a$, since such a relation would imply
that $a$ is a positive multiple of $b$, contradicting $a<b$.
Therefore
$\widehat{\varepsilon}_{a,q_b}=0$,
and therefore
$\widehat V_{2,q_b}
=
\mathscr C_1(\alpha,\beta)
\widehat\varepsilon_{b,q_b}
\neq0$.
Consequently,
\[
\omega^{-q_b}
=
e^{-2\pi i/b}
\in\mathfrak B_2(k,\ell),
\]
and this point has order exactly \(b\). Since every order occurring
in \(\mathfrak B_2(k,\ell)\) divides either \(a\) or \(b\), and
\(a<b\), it follows that
\[
b
=
\max_{\zeta\in\mathfrak B_2(k,\ell)}
\operatorname{ord}(\zeta).
\]

It remains to recover \(L\).
If \(a\mid b\), then
$L=\operatorname{lcm}(a,b)=b$.
Every order occurring in \(\mathfrak B_2(k,\ell)\) divides \(b\),
while an element of order \(b\) occurs. Hence
\[
L
=
\operatorname{lcm}
\{
\operatorname{ord}(\zeta):
\zeta\in\mathfrak B_2(k,\ell)
\}.
\]

Suppose now that \(a\nmid b\). Then necessarily \(a>1\). Set
$q_a=\frac{L}{a}$.
By \eqref{eq:epsilon-Fourier-explicit},
$\widehat\varepsilon_{a,q_a}\neq0$.
If \(q_a\) were a multiple of \(L/b\), then
$\frac{L}{a}
=
s\frac{L}{b}$
for some \(s\ge1\), which would imply \(b=sa\), contrary to
\(a\nmid b\). Thus
$\widehat\varepsilon_{b,q_a}=0$,
and hence
\[
\widehat V_{2,q_a}
=
\mathscr C_2(\alpha,\beta)
\widehat\varepsilon_{a,q_a}
\neq0.
\]
Therefore
\[
\omega^{-q_a}
=
e^{-2\pi i/a}
\in\mathfrak B_2(k,\ell),
\]
and this point has order exactly \(a\).

Hence, when \(a\nmid b\), the boundary support contains points of
orders \(a\) and \(b\), while every order occurring in it divides
either \(a\) or \(b\). Therefore
\[
L
=
\operatorname{lcm}(a,b)
=
\operatorname{lcm}
\{
\operatorname{ord}(\zeta):
\zeta\in\mathfrak B_2(k,\ell)
\}.
\]
This proves \eqref{eq:b-L-boundary-recovery}.
\end{proof}

\begin{corollary}
\label{cor:exact-second-period}
The least period of the second-order coefficient
$V_2^{(k,\ell)}=\Psi_{k,\ell}$
is exactly
$\operatorname{lcm}(k,\ell)$.
\end{corollary}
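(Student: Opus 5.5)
The plan is to read off the least period from the Fourier support of $\Psi_{k,\ell}$, which Theorem~\ref{thm:second-boundary-spectrum} has essentially already identified. Theorem~\ref{thm:second-order-periodic} shows that $L=\operatorname{lcm}(k,\ell)$ is a period of $\Psi_{k,\ell}$, so its least period $P$ divides $L$. It therefore suffices to exclude every proper divisor $P$ of $L$.

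The key observation is elementary discrete Fourier analysis. Suppose $\Psi_{k,\ell}$ had period $P\mid L$. Then, exactly as in the proof of Lemma~\ref{lem:epsilon-Fourier} with $m$ replaced by $P$, the sum over $h$ in the block decomposition $j=r+hP$ annihilates every coefficient $\widehat V_{2,q}=\widehat\Psi_q$ unless $\omega^{-qP}=1$. This says that every point $\omega^{-q}$ with $\widehat V_{2,q}\neq0$ is a $P$-th root of unity, so its order divides $P$. By Proposition~\ref{prop:second-boundary-spectrum}, the points with $\widehat V_{2,q}\neq0$ are precisely the elements of $\mathfrak B_2(k,\ell)$. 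Hence every order $\operatorname{ord}(\zeta)$ with $\zeta\in\mathfrak B_2(k,\ell)$ divides $P$, and therefore $\operatorname{lcm}\{\operatorname{ord}(\zeta):\zeta\in\mathfrak B_2(k,\ell)\}$ divides $P$.

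Now I invoke \eqref{eq:b-L-boundary-recovery}, which identifies that least common multiple with $L$. Concretely, $e^{-2\pi i/b}\in\mathfrak B_2(k,\ell)$ always, and $e^{-2\pi i/a}\in\mathfrak B_2(k,\ell)$ when $a\nmid b$; when $a\mid b$ we have $L=b$ directly. This gives $L\mid P$, and combined with $P\mid L$ we get $P=L$.

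The only point needing care is the first step: that a function of period $P\mid L$ has $L$-periodic Fourier coefficients supported on multiples of $L/P$. I would state and prove this briefly with the geometric-sum argument already used in Lemma~\ref{lem:epsilon-Fourier}. One could alternatively argue without Fourier analysis, using the jump locations of $\Psi_{k,\ell}$ from the relation $\Psi_{k,\ell}(j+1)-\Psi_{k,\ell}(j)=C_{k,\ell}-\mathcal A_{k,\ell}(j)$: the jumps occur exactly at multiples of $a$ or $b$, with positive amplitudes $\Lambda_a$ and $\Lambda_b$. The Fourier route is the more direct given the results already established, so that is the one I would follow.
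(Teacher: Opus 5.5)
Your proof is correct and follows the paper's argument: both identify the nonzero Fourier modes of $\Psi_{k,\ell}$ with $\mathfrak B_2(k,\ell)$ via Proposition~\ref{prop:second-boundary-spectrum}, and then invoke the $\operatorname{lcm}$ identity in \eqref{eq:b-L-boundary-recovery}. The only difference is that you prove just the direction you need (a period $P\mid L$ forces the Fourier support onto $P$-th roots of unity), whereas the paper states the full ``$d$ is a period if and only if'' characterization, citing uniqueness of the finite Fourier expansion.
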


\begin{proof}
By the finite Fourier expansion \eqref{eq:higher-Fourier-expansion} and its
uniqueness, a positive integer \(d\) is a period of
\(V_2^{(k,\ell)}\) if and only if
$(\omega^q)^d=1$
for every \(q\) with \(\widehat V_{2,q}\neq0\). Hence its least period is
\[
\operatorname{lcm}
\left\{
\operatorname{ord}(\omega^{-q}):
\widehat V_{2,q}\neq0
\right\}.
\]
By \eqref{eq:B2-Fourier-identification}, these roots are precisely the
points of \(\mathfrak B_2(k,\ell)\), and the second identity in
\eqref{eq:b-L-boundary-recovery} shows that the above least common
multiple is
$L=\operatorname{lcm}(k,\ell)$.
\end{proof}


We now combine Theorem~\ref{thm:second-boundary-spectrum}
with \eqref{eq:C-boundary-recovery} to recover the remaining exponent.

\begin{theorem}[Boundary rigidity]
\label{thm:boundary-rigidity}
The unordered pair \(\{k,\ell\}\) is uniquely determined by the
boundary limit
\[
\lim_{t\to1^-}
\frac{\mathcal P_{k,\ell}(t)}
{\log\frac1{1-t}}
\]
together with the second-order boundary support
$\mathfrak B_2(k,\ell)$.
\end{theorem}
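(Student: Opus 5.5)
The plan is to recover $b=\max\{k,\ell\}$ from the boundary support, then $C_{k,\ell}$ from the logarithmic boundary limit, and finally $a=\min\{k,\ell\}$ by inverting the strictly monotone profile of Section~\ref{sec3}, now with $b$ fixed instead of $a$.

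First, by Corollary~\ref{cor:generating-coefficient}, the limit $\lim_{t\to1^-}\mathcal P_{k,\ell}(t)/\log\frac1{1-t}$ equals $C_{k,\ell}$. Second, by Theorem~\ref{thm:second-boundary-spectrum}, the set $\mathfrak B_2(k,\ell)$ determines $b=\max_{\zeta\in\mathfrak B_2(k,\ell)}\operatorname{ord}(\zeta)$. This set is determined by the function $\Gamma_2$, hence by $\mathcal P_{k,\ell}$ itself.

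It remains to show that, for fixed $b$, the map $a\mapsto C_{a,b}$ is injective on $\{1,\dots,b-1\}$. We have $C_{k,\ell}=\mathscr C(1/b,1/a)$. By Lemma~\ref{lem:C-monotone}, $\beta\mapsto\mathscr C(1/b,\beta)$ is strictly decreasing on $(0,\infty)$. Since $a\mapsto 1/a$ is strictly decreasing, $a\mapsto\mathscr C(1/b,1/a)$ is strictly increasing in $a$, and so it is injective. Equivalently, by homogeneity \eqref{eq:C-homogeneity}, $C_{a,b}=b\,\mathscr C(1,b/a)$, and this is strictly increasing in $a$ for the same reason. Hence $a$ is the unique integer in $[1,b-1]$ with $\mathscr C(1/b,1/a)=C_{k,\ell}$.

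For the injectivity formulation, suppose $(k',\ell')$ gives the same boundary limit and the same support. Then it has the same $b$ by the second step. It also has the same $C$, and so the same $a$ by the monotonicity argument. Hence $\{k,\ell\}=\{a,b\}=\{k',\ell'\}$.

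The only point needing care is that Section~\ref{sec3} established monotonicity in $b$ for fixed $a$, whereas here the roles are reversed. The reversal costs nothing because Lemma~\ref{lem:C-monotone} gives strict monotonicity in each variable separately. The argument never needs $a$ to be visible in the support, which matters because the divisible case $a\mid b$ may hide it.
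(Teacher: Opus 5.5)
Your proposal is correct and takes essentially the same route as the paper. You read $C_{k,\ell}$ from the logarithmic boundary limit via Corollary~\ref{cor:generating-coefficient}, take $b$ as the maximal order in $\mathfrak B_2(k,\ell)$ via Theorem~\ref{thm:second-boundary-spectrum}, and pin down $a$ by the strict monotonicity of $x\mapsto\mathscr C(1/b,1/x)$ from Lemma~\ref{lem:C-monotone}.
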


\begin{proof}
By \eqref{eq:C-boundary-recovery}, 
\[
C_{k,\ell}
=
\lim_{t\to1^-}
\frac{P_{k,\ell}(t)}
{\log\frac1{1-t}}.
\]

The first identity in \eqref{eq:b-L-boundary-recovery} determines
$b=\max\{k,\ell\}$
from the second-order boundary support. With
$a=\min\{k,\ell\}$,
we obtain
$C_{k,\ell}
=
\mathscr C\left(\frac1b,\frac1a\right)$.

For fixed \(b\), the function
\[
x\longmapsto
\mathscr C\left(\frac1b,\frac1x\right),
\qquad 0<x<b,
\]
is strictly increasing. Indeed, \(x\mapsto1/x\) is strictly
decreasing, whereas
$y\longmapsto \mathscr C\left(\frac1b,y\right)$
is strictly decreasing by Lemma~\ref{lem:C-monotone}.
Hence the equation
\[
\mathscr C\left(\frac1b,\frac1x\right)=C_{k,\ell}
\]
has at most one solution in \((0,b)\). Since the actual smaller
exponent \(a\) is such a solution, it is uniquely determined.
Thus the unordered pair \(\{k,\ell\}\) is uniquely determined.
\end{proof}

\begin{remark}
Unlike the asymptotic rigidity theorem in
Section~\ref{sec3}, Theorem~\ref{thm:boundary-rigidity}
uses only boundary data of Yang's generating function and requires no
descent information from the numerical invariant sequence.
\end{remark}

\subsection{Higher-order boundary reconstruction}

With $L$ already determined, we recover the higher-order periodic
coefficients successively using the following single-mode
polylogarithmic estimate.

\begin{lemma}
\label{lem:polylog-boundary-reconstruction}
Let $m\geq1$.  As $r\to1^-$,
\[
\left.
\frac{d^{m-1}}{dz^{m-1}}
\operatorname{Li}_m(z)
\right|_{z=r}
=
\log\frac1{1-r}
+
O(1).
\]
If $|\xi|=1$ and $\xi\neq1$, then
\[
\left.
\frac{d^{m-1}}{dz^{m-1}}
\operatorname{Li}_m(z)
\right|_{z=r\xi}
=
O(1).
\]
\end{lemma}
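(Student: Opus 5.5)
The plan is to reduce everything to the classical formula $\frac{d}{dz}\operatorname{Li}_{s}(z)=\operatorname{Li}_{s-1}(z)/z$ for $s\geq2$, valid for $|z|<1$. This formula comes from termwise differentiation of the defining series.

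\textbf{Structure of the derivative.} We argue by induction on $m$ that, for $m\geq2$,
\[
\frac{d^{m-1}}{dz^{m-1}}\operatorname{Li}_m(z)
=
\frac{\operatorname{Li}_1(z)}{z^{m-1}}
+
\sum_{s=2}^{m}c_{m,s}\,\frac{\operatorname{Li}_s(z)}{z^{m-1}},
\]
with constant coefficients $c_{m,s}$. The shape is what matters, not the exact constants. Each differentiation does one of two things: it lowers the index of some $\operatorname{Li}_s$ by one, picking up a factor $1/z$, or it differentiates a power $z^{-p}$. The term with $\operatorname{Li}_1$ can only arise by lowering the index at every step, so it appears exactly once, with coefficient $1/z^{m-1}$. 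Every other term contains $\operatorname{Li}_s$ with $s\geq2$.

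An alternative route avoids tracking constants. Write $\frac{d^{m-1}}{dz^{m-1}}\operatorname{Li}_m(z)=\sum_{j\geq m}\frac{(j-1)\cdots(j-m+1)}{j^{m}}\,z^{j-m+1}$. The coefficient equals $\frac1j+O(j^{-2})$. Comparing with $z^{-m+1}\operatorname{Li}_1(z)$ leaves a series whose coefficients are $O(j^{-2})$, which converges absolutely and uniformly on $\overline{\mathbb D}$. I would present this coefficient comparison, since it is cleanest.

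\textbf{The case $z=r\to1^-$.} The function $\operatorname{Li}_1(r)=\log\frac1{1-r}$, and $r^{-(m-1)}=1+O(1-r)$. Hence $r^{-(m-1)}\operatorname{Li}_1(r)=\log\frac1{1-r}+O\bigl((1-r)\log\frac1{1-r}\bigr)=\log\frac1{1-r}+O(1)$. The remaining absolutely convergent series is bounded on $\overline{\mathbb D}$, which gives the first claim.

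\textbf{The case $z=r\xi$ with $|\xi|=1$, $\xi\neq1$.} The remainder series is again bounded. For the main term, $|\operatorname{Li}_1(r\xi)|=|\operatorname{Log}(1-r\xi)|$. This stays bounded because $1-r\xi\to1-\xi\neq0$ and $\operatorname{Log}$ is continuous on the closed right half-plane minus $0$. The factor $(r\xi)^{-(m-1)}$ is bounded for $r\geq1/2$. This gives the second claim.

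\textbf{Main obstacle.} The only delicate point is the coefficient estimate $\frac{(j-1)\cdots(j-m+1)}{j^m}=\frac1j+O(j^{-2})$, uniformly in $j\geq m$ for fixed $m$. It follows from expanding $\prod_{i=1}^{m-1}(1-i/j)=1+O(1/j)$, together with the index shift between $z^{j-m+1}$ and the powers appearing in $z^{-m+1}\operatorname{Li}_1(z)$. These align exactly, since $z^{-m+1}z^{j}=z^{j-m+1}$, so the only leftover consists of the finitely many terms with $j<m$. Those terms are harmless as Laurent terms near the boundary.
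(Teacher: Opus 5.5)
Your coefficient-comparison route is essentially the paper's proof. The paper multiplies through by $z^{m-1}$ first, writing $z^{m-1}\frac{d^{m-1}}{dz^{m-1}}\operatorname{Li}_m(z)=\operatorname{Li}_1(z)+\mathcal H_m(z)$ with $O(n^{-2})$ coefficients, which avoids your Laurent leftovers. It then treats $r^{-(m-1)}=1+O(1-r)$ and the point $r\xi$ exactly as you do. Your induction via $\operatorname{Li}_s'(z)=\operatorname{Li}_{s-1}(z)/z$ is an equally valid alternative that needs no explicit constants.

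There is one slip to fix. The $(m-1)$-st derivative is $\sum_{j\ge m-1}\frac{j(j-1)\cdots(j-m+2)}{j^m}z^{j-m+1}$, not $\sum_{j\ge m}\frac{(j-1)\cdots(j-m+1)}{j^m}z^{j-m+1}$. Already for $m=2$ your formula gives $\sum_{j\ge2}\frac{j-1}{j^2}z^{j-1}$ instead of $\sum_{j\ge1}\frac{z^{j-1}}{j}$. Since $\frac1j\prod_{i=1}^{m-2}(1-\frac ij)=\frac1j+O(j^{-2})$ as well, nothing downstream changes.
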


\begin{proof}
For \(m=1\), both assertions follow immediately from
$\operatorname{Li}_1(z)
=
\operatorname{Log}\frac1{1-z}$.
Assume \(m\ge2\). For \(|z|<1\),
\[
\operatorname{Li}_m(z)
=
\sum_{n=1}^{\infty}\frac{z^n}{n^m},
\]
and hence
\[
z^{m-1}
\frac{d^{m-1}}{dz^{m-1}}
\operatorname{Li}_m(z)
=
\sum_{n\ge m-1}
\frac{n(n-1)\cdots(n-m+2)}{n^m}z^n.
\]
For fixed \(m\),
\[
\frac{n(n-1)\cdots(n-m+2)}{n^m}
=
\frac1n
\prod_{j=1}^{m-2}
\left(1-\frac{j}{n}\right)
=
\frac1n+O(n^{-2}).
\]
Thus, after absorbing finitely many initial terms,
\[
z^{m-1}
\frac{d^{m-1}}{dz^{m-1}}
\operatorname{Li}_m(z)
=
\operatorname{Li}_1(z)+\mathcal H_m(z),
\]
where the Taylor coefficients of \(\mathcal H_m\) are
\(O(n^{-2})\). Hence
\(\mathcal H_m\in C(\overline{\mathbb D})\).

For \(z=r\),
\[
r^{m-1}
\frac{d^{m-1}}{dz^{m-1}}
\operatorname{Li}_m(z)\bigg|_{z=r}
=
\log\frac1{1-r}+O(1).
\]
Since
$r^{-(m-1)}=1+O(1-r)$
and
$(1-r)\log\frac1{1-r}\longrightarrow0$,
we obtain
\[
\frac{d^{m-1}}{dz^{m-1}}
\operatorname{Li}_m(z)\bigg|_{z=r}
=
\log\frac1{1-r}+O(1).
\]

If \(|\xi|=1\) and \(\xi\neq1\), then
$1-r\xi\longrightarrow1-\xi\neq0$,
so \(\operatorname{Li}_1(r\xi)\) remains bounded as \(r\to1^-\).
Since \(\mathcal H_m\) is also bounded on
\(\overline{\mathbb D}\),
\[
(r\xi)^{m-1}
\frac{d^{m-1}}{dz^{m-1}}
\operatorname{Li}_m(z)\bigg|_{z=r\xi}
=
O(1).
\]
Since \((r\xi)^{m-1}\) is bounded away from zero for \(r\)
sufficiently close to \(1\), the second assertion follows.
\end{proof}

For every $m\geq1$, define the successively renormalized generating
function
\[
\begin{aligned}
\mathcal R_m(t)
&=
\mathcal P_{k,\ell}(t)
-
\Sigma_0(M_{k,\ell})-
\sum_{s=1}^{m-1}
\sum_{p=0}^{L-1}
\widehat{V}_{s,p}
\operatorname{Li}_s(\omega^pt).
\end{aligned}
\]
For $m=1$, the double sum is empty.

By \eqref{eq:polylog-hierarchy}, after subtracting the first
\(m-1\) asymptotic levels we obtain
\[
\mathcal R_m(t)
=
\sum_{p=0}^{L-1}
\widehat{V}_{m,p}
\operatorname{Li}_m(\omega^pt)
+
\mathcal E_m^{(k,\ell)}(t),
\]
where
$\mathcal E_m^{(k,\ell)}
\in
C^{m-1}(\overline{\mathbb D})$.

\begin{theorem}[Higher-order boundary reconstruction]
\label{thm:higher-boundary-reconstruction}
For every $m\ge1$ and every $0\le q\le L-1$,
the following normalized radial boundary limit exists and satisfies
\begin{equation*}\label{eq:higher-boundary-reconstruction}
\begin{aligned}
\kappa_{m,q}
&:=
\lim_{r\to1^-}
\frac{
\mathcal R_m^{(m-1)}(r\omega^{-q})
}{
\log\frac{1}{1-r}
}
=
\omega^{q(m-1)}\widehat{V}_{m,q},
\\[5pt]
\widehat{V}_{m,q}
&=
\omega^{-q(m-1)}\kappa_{m,q},
\\[5pt]
V_m^{(k,\ell)}(j)
&=
\sum_{q=0}^{L-1}
\kappa_{m,q}\,
\omega^{q(j-m+1)}.
\end{aligned}
\end{equation*}
\end{theorem}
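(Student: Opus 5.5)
The plan is to differentiate the decomposition of $\mathcal R_m$ displayed just before the statement $m-1$ times and evaluate it along the ray $t=r\omega^{-q}$. Recall that
\[
\mathcal R_m(t)=\sum_{p=0}^{L-1}\widehat V_{m,p}\operatorname{Li}_m(\omega^pt)+\mathcal E_m^{(k,\ell)}(t),
\qquad \mathcal E_m^{(k,\ell)}\in C^{m-1}(\overline{\mathbb D}),
\]
which comes from Theorem~\ref{thm:polylogarithmic-hierarchy} with $N=m$. By the chain rule,
\[
\frac{d^{m-1}}{dt^{m-1}}\operatorname{Li}_m(\omega^pt)=\omega^{p(m-1)}\operatorname{Li}_m^{(m-1)}(\omega^pt).
\]
The remainder $\mathcal E_m^{(k,\ell)}$ has a bounded $(m-1)$st derivative on $\overline{\mathbb D}$. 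This follows from its $C^{m-1}$ regularity, or directly because its coefficients are $O(j^{-m-1})$, so $j^{m-1}$ times them is summable.

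Next, I will split the finite sum over $p$ into the single mode $p=q$ and the modes $p\neq q$.
\begin{itemize}
\item For $p=q$, we have $\omega^q t=r$. The first part of Lemma~\ref{lem:polylog-boundary-reconstruction} then gives the contribution $\omega^{q(m-1)}\widehat V_{m,q}\bigl(\log\frac1{1-r}+O(1)\bigr)$.
\item For $p\neq q$, we have $\omega^pt=r\xi$ with $\xi=\omega^{p-q}\neq1$ and $|\xi|=1$. The second part of the same lemma shows these contributions are $O(1)$.
\end{itemize}
Since the sum over $p$ is finite, this yields
\[
\mathcal R_m^{(m-1)}(r\omega^{-q})=\omega^{q(m-1)}\widehat V_{m,q}\log\frac1{1-r}+O(1).
\]
Dividing by $\log\frac1{1-r}\to\infty$ and letting $r\to1^-$ shows the limit $\kappa_{m,q}$ exists and equals $\omega^{q(m-1)}\widehat V_{m,q}$. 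The case $m=1$ needs no differentiation, and it is consistent with the lemma's $m=1$ case.

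The remaining two identities are algebraic. Multiplying by $\omega^{-q(m-1)}$ inverts the relation and gives $\widehat V_{m,q}=\omega^{-q(m-1)}\kappa_{m,q}$. Substituting this into the finite Fourier inversion $V_m^{(k,\ell)}(j)=\sum_q\widehat V_{m,q}\omega^{qj}$ from \eqref{eq:higher-Fourier-expansion} gives
\[
V_m^{(k,\ell)}(j)=\sum_{q=0}^{L-1}\kappa_{m,q}\,\omega^{q(j-m+1)}.
\]

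I expect the main point requiring care to be the regularity of the remainder, namely that $\mathcal E_m^{(k,\ell)}$ really lies in $C^{m-1}(\overline{\mathbb D})$ when we take $N=m$. Theorem~\ref{thm:polylogarithmic-hierarchy} supplies exactly this, since the coefficients $e_{m,j}=O(j^{-m-1})$ make $j^{m-1}e_{m,j}=O(j^{-2})$ summable. A second point is that $\mathcal R_m$ as defined coincides with the $N=m$ hierarchy minus the first $m-1$ levels. This holds because the Fourier coefficients $\widehat V_{s,p}$ are uniquely determined by Theorem~\ref{thm:periodic-Poincare-expansion}. So the construction is recursive: each level's $\widehat V_{s,p}$, once recovered from boundary data, is used to form $\mathcal R_{s+1}$, and induction on $m$ gives the reconstruction from boundary data alone.
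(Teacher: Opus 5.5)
Your proposal is correct and follows the paper's own proof essentially step for step. You differentiate the $N=m$ decomposition of $\mathcal R_m$ exactly $m-1$ times, extract $\omega^{q(m-1)}\widehat V_{m,q}\log\frac{1}{1-r}$ from the single mode $p=q$ via Lemma~\ref{lem:polylog-boundary-reconstruction} and the chain rule, bound the modes $p\neq q$ and the $C^{m-1}(\overline{\mathbb D})$ remainder by $O(1)$, and finish with finite Fourier inversion, exactly as the paper does.
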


\begin{proof}
Differentiate the representation of $\mathcal R_m$ exactly $m-1$
times.  Since
$\mathcal E_m^{(k,\ell)}
\in
C^{m-1}(\overline{\mathbb D})$,
its $(m-1)$-st derivative remains bounded on radial approach to the
unit circle.

Fix $q$.  Along
$t=r\omega^{-q}$,
the mode with Fourier index $p=q$ satisfies
\[
\begin{aligned}
&
\left.
\frac{d^{m-1}}{dt^{m-1}}
\operatorname{Li}_m(\omega^qt)
\right|_{t=r\omega^{-q}}=
\omega^{q(m-1)}
\log\frac1{1-r}
+
O(1)
\end{aligned}
\]
by Lemma~\ref{lem:polylog-boundary-reconstruction} and the chain rule.

For $p\neq q$, the argument tends to $\omega^{p-q}\neq1$,
so the corresponding derivative remains bounded.  Hence
\[
\mathcal R_m^{(m-1)}(r\omega^{-q})
=
\omega^{q(m-1)}
\widehat{V}_{m,q}
\log\frac1{1-r}
+
O(1).
\]
Dividing by
$\log\frac1{1-r}$
and letting \(r\to1^-\) gives
\[
\kappa_{m,q}
=
\omega^{q(m-1)}\widehat V_{m,q}.
\]
Hence
\[
\widehat V_{m,q}
=
\omega^{-q(m-1)}\kappa_{m,q},
\]
and the final reconstruction formula follows from the finite Fourier
inversion formula in \eqref{eq:higher-Fourier-expansion}.
\end{proof}

Together with Theorem~\ref{thm:polylogarithmic-hierarchy}, the results of this section establish a two-way correspondence between the periodic asymptotic hierarchy and the boundary data of Yang's generating function.

\medskip

\vspace{20pt}
\noindent\textbf{AI Disclosure.}
The research questions and the main mathematical ideas of this work originated with the authors. ChatGPT 5.6 Sol (OpenAI) was used as an auxiliary tool during the development of the manuscript to explore and check some intermediate calculations and arguments, and to assist with the organization, exposition, and language editing of the text. All AI-assisted mathematical content was independently reviewed and verified by the authors. The authors take full responsibility for the results, proofs, references, and final text of the article.

\vspace{20pt}
\noindent\textbf{Acknowledgment.}
Y. Liu is supported by the Natural Science Foundation Project in Henan Province (No. 262300421861), the funding program for young backbone teachers in higher education institutions in Henan Province (No. 2024GGJS106), the key research projects of higher education institutions in Henan Province (No. 25B110009) and the general project cultivation fund of Nanyang Normal University (No. 2025PY034). Y. Lu is supported by NSFC (Grant Nos. 12031002 and 12671149). Y. Yang  is supported by NSFC (Grant No.12471117).

\vspace{20pt}
%
\noindent\textbf{Conflict of interest}
The authors declare that they have no conflict
of interest. 

\vspace{20pt}

\noindent\textbf{Data availability statement}
No data, models, or code were generated or used for the research described in the article.


\begin{thebibliography}{99}

\bibitem{AzariLuYang2017}
F. Azari Key, Y. Lu and R. Yang,
On numerical invariants for homogeneous submodules in
$H^2(\mathbb D^2)$,
\emph{New York J. Math.} \textbf{23} (2017),
505--526.

\bibitem{ChenGuo2003}
X. Chen and K. Guo,
\emph{Analytic Hilbert Modules},
Chapman \& Hall/CRC Research Notes in Mathematics, vol.~433,
Chapman \& Hall/CRC, Boca Raton, FL, 2003.

\bibitem{DouglasPaulsen1989}
R. G. Douglas and V. I. Paulsen,
\emph{Hilbert Modules over Function Algebras},
Pitman Research Notes in Mathematics Series, vol.~217,
Longman Scientific \& Technical, Harlow, 1989.

\bibitem{GuoWang2012}
K. Guo and P. Wang,
Essentially normal Hilbert modules and $K$-homology IV:
quasi-homogeneous quotient modules of Hardy module on the polydisks,
Sci. China Math. \textbf{55} (2012), no.~8, 1613--1626.


\bibitem{GuoWangZhang2012}
K. Guo, K. Wang and G. Zhang,
Trace formulas and $p$-essentially normal properties of quotient modules
on the bidisk,
J. Operator Theory \textbf{67} (2012), no.~2, 511--535.


\bibitem{GuoYang2004}
K. Guo and R. Yang,
The core function of submodules over the bidisk,
\emph{Indiana Univ. Math. J.} \textbf{53} (2004), no.~1,
205--222.


\bibitem{LiuLuZu2026Quadratic}
Y. Liu, Y. Lu and C. Zu,
On numerical invariants for submodules $[(z-w)^2]$ in
$H^2(\mathbb D^2)$,
\emph{Bull. Malays. Math. Sci. Soc.} \textbf{49} (2026),
Article~134.


\bibitem{LiuLuZu2026Block}
Y. Liu, Y. Lu and C. Zu,
Block repetition of numerical invariants for the submodules
$[z^k-w^k]$ in $H^2(\mathbb D^2)$,
arXiv:2608.13275, 2026.

\bibitem{LiuLuZu2026Strict}
Y. Liu, Y. Lu and C. Zu,
Strict monotonicity of numerical invariants for the submodules
$[(z-w)^k]$ in $H^2(\mathbb D^2)$,
arXiv:2608.17780, 2026.

\bibitem{LiuLuZu2026Quasi}
Y. Liu, Y. Lu and Y. Yang,
Numerical invariants and parameter recovery for
quasi-homogeneous submodules $[z^k-w^\ell]$, arXiv:2609.25640, 2026.

\bibitem{LuZu2026CMV}
Y. Lu and C. Zu,
Verblunsky coefficients, CMV matrices and numerical invariants
of homogeneous bidisc submodules,
arXiv:2608.18456, 2026.

\bibitem{NIST2010}
F. W. J. Olver, D. W. Lozier, R. F. Boisvert and C. W. Clark, eds.,
\emph{NIST Handbook of Mathematical Functions},
Cambridge University Press, Cambridge, 2010.

\bibitem{Yang2001}
R. Yang,
Operator theory in the Hardy space over the bidisk. III,
\emph{J. Funct. Anal.} \textbf{186} (2001), no.~2,
521--545.


\bibitem{Yang2005HS}
R. Yang,
Hilbert-Schmidt submodules and issues of unitary equivalence,
\emph{J. Operator Theory} \textbf{53} (2005), no.~1,
169--184.

\bibitem{Yang2005Core}
R. Yang,
The core operator and congruent submodules,
\emph{J. Funct. Anal.} \textbf{228} (2005), no.~2,
469--489.


\end{thebibliography}
\end{document}